\newtheorem{theorem}{Theorem}
\newtheorem*{theorem*}{Theorem}
\newtheorem{lemma}[theorem]{Lemma}
\newtheorem*{lemma*}{Lemma}
\newtheorem{problem}[theorem]{Problem}
\newtheorem*{problem*}{Problem}
\newtheorem*{claim*}{Claim}
\newtheorem{proposition}[theorem]{Proposition}
\newtheorem*{proposition*}{Proposition}
\newtheorem{observation}[theorem]{Observation}
\newtheorem{corollary}[theorem]{Corollary}
\newtheorem*{corollary*}{Corollary}
\newtheorem{conjecture}[theorem]{Conjecture}
\newtheorem{innerclaim}{Claim}[theorem]
\newcommand{\xb}{\mathbf{x}}
\newcommand{\prm}{^{\prime}}
\newcommand{\dprm}{^{\prime \prime}}
\newcommand{\floor}[1]{\left \lfloor #1 \right \rfloor}
\newcommand{\ceil}[1]{\left \lceil #1 \right \rceil}
\newcommand{\abs}[1]{\left|#1\right|}
\newcommand{\nats}{\mathbb{N}}
\newcommand{\calk}{\mathcal{K}}
\newcommand{\expect}[1]{\mathbf{E}[#1]}
\newcommand{\prob}[1]{\mathbf{P}\big[#1\big]}
\newcommand{\e}{{\rm e}}
\newcommand{\calP}{\mathcal{P}}
\DeclareMathOperator{\SF}{SF}
\DeclareMathOperator{\KF}{KF}
\newcommand{\eps}{\varepsilon}
\newcommand{\calH}{\mathcal{H}}
\newcommand{\calG}{\mathcal{G}}
\newcommand{\calK}{\mathcal{K}}
\newcommand{\R}{\mathbb{R}}
\def \hG {\hat{G}}
\def \hS {\hat{S}}
\title{\vspace{-0.8cm}Ascending Subgraph Decomposition}
\author{
	Kyriakos Katsamaktsis\thanks{
		Department of Mathematics,
		University College London,
		Gower Street, London WC1E~6BT, UK.
		Email: \texttt{kyriakos.katsamaktsis.21}@\texttt{ucl.ac.uk}.
        Research supported by the Engineering and Physical Sciences Research Council [grant number EP/W523835/1].
	}
	\and
	Shoham Letzter\thanks{
		Department of Mathematics, 
		University College London, 
		Gower Street, London WC1E~6BT, UK. 
		Email: \texttt{s.letzter}@\texttt{ucl.ac.uk}. 
		Research supported by the Royal Society.
    }
    \and
    Alexey Pokrovskiy\thanks{
		Department of Mathematics, 
		University College London, 
		Gower Street, London WC1E~6BT, UK. 
		Email: \texttt{dralexeypokrovskiy}@\texttt{gmail.com}.
	}
    \and
    Benny Sudakov\thanks{
		Department of Mathematics, ETH Z\"urich, Switzerland
		Email: \texttt{benjamin.sudakov}@\texttt{math.ethz.ch}. Research supported in part by SNSF grant 200021\_196965.
    }
}
\date{}
\begin{document}

\maketitle
\begin{abstract}
	A typical theme for many well-known decomposition problems is to show that some obvious necessary conditions for decomposing a graph $G$ into copies $H_1, \ldots, H_m$
	are also sufficient. One such problem was posed in 1987, by Alavi, Boals, Chartrand, Erd\H{o}s, and Oellerman. They conjectured that the edges of every graph with $\binom{m+1}2$ edges can be decomposed into subgraphs $H_1, \dots, H_m$ such that each $H_i$ has $i$ edges and is isomorphic to a subgraph of $H_{i+1}$. In this paper we prove this conjecture for sufficiently large $m$.
\end{abstract}

\section{Introduction}

For a graph \(G\), we say a collection of graphs \(H_1,\hdots, H_m\)
is a  \emph{decomposition} of \(G\), if  \(G\) is an edge-disjoint union of \(H_1,\hdots, H_m\).
Decomposition problems have been a central theme in combinatorics since Euler's work on the existence of orthogonal Latin squares in the $18^{\text{th}}$ century; recall that a \emph{Latin square} is an $n \times n$ array, filled with numbers from $[n]$, such that each $i \in [n]$ appears exactly once in each row and column. Euler asked for which values of $n$ there exist two $n\times n$ Latin squares $L, L'$ with the property that all $n^2$ ordered pairs $(L_{i,j}, L'_{i,j})$, with $1\leq i, j\leq n$, are distinct. This problem turns out to have an equivalent formulation in terms of graph decompositions. Indeed, one can show that Euler's problem is  equivalent to determining which complete 4-partite graphs $K_{n,n,n,n}$ have a decomposition \(H_1,\hdots, H_m\) where each $H_i$ is a $K_4$-factor, namely each $H_i$ is a collection of vertex-disjoint $K_4$'s, such that each vertex of $K_{n,n,n,n}$ appears in one $K_4$. 
        
A large variety of other graph/hypergraph decomposition problems has been studied over the years. A typical theme for many well-known such problems is to show that some obvious necessary conditions for decomposing a graph $G$ into copies of $H_1,\hdots, H_m$ are also sufficient. For example, the famous ``existence of designs'' question posed in 1853 by Steiner asked to prove that for large enough $n$, the complete $r$-uniform hypergraph $\mathcal K^{(r)}_n$ has a decomposition into copies of $\mathcal K^{(r)}_k$ if and only if $\binom{n}r$ is divisible by $\binom{k}r$ (which is equivalent to asking that the number of edges of $\mathcal K^{(r)}_n$ is divisible by the number of edges of $\mathcal K^{(r)}_k$) and also that, for \(i\in [r-1]\), $\binom{n-i}{r-i}$ is divisible by $\binom{k-i}{r-i}$ (which is equivalent to asking that the codegree of any  \(i\)-set of vertices in  $\mathcal K^{(r)}_n$ is divisible by the codegree of each \(i\)-set of vertices in  $\mathcal K^{(r)}_k$).
The  existence of designs problem was solved by Keevash in~\cite{keevash2014existence} (see~\cite{glock2016existence} for an alternative proof).  
Other recently solved problems include the Oberwolfach problem (decompositions of complete graphs into cycle factors, see \cite{oberwolfachglock, oberwolfachkeevash}) and
Ringel's conjecture (decompositions of complete graphs into copies of a fixed tree $T$, see \cite{ringel,ringelkeevash}).

One famous conjecture in the area that is still open and the closest to the problem we will consider here is the tree packing conjecture of Gy\'arf\'as~\cite{gyarfas}.
It says that for  any collection of trees \(T_1,\hdots,T_{n}\) where \(T_i\) has \(i\) edges, the complete graph $K_{n+1}$ can be decomposed into copies of $T_1, \dots, T_n$. Again the motivation here is to show that the trivial condition $\binom {n+1}2=e(K_n)=e(T_1)+\dots+e(T_n)=1+\dots+n$ is also sufficient for such a decomposition to exist.
Some strong results have been proved for this problem when there is some control over the degrees of $T_i$. Joos, Kim, K\"uhn, and Osthus \cite{joos2019optimal} proved the conjecture when  $\Delta(T_i)\leq \Delta$ for all $T_i$ and $n$ is large compared to $\Delta$, and subsequently  Allen, B\"ottcher, Clemens, Hladk\'y, Piguet, and Taraz \cite{allen2021} proved the conjecture when $\Delta(T_i)\leq cn/\log n$ for all $T_i$, for some universal constant $c > 0$.
But for trees with unbounded degrees, the conjecture is still wide open. For example, it is not even known if we can find edge-disjoint copies of $T_{n}, T_{n-1}, \dots, T_{n-5}$ in $K_n$.

All the problems discussed so far have had the host graph $G$ being complete and the target graphs $H_1, \dots, H_m$ being similar to each other in some way (i.e.\ we wanted all $H_i$ to be copies of $\mathcal K_k^{(r)}$, or all $H_i$ to be cycle factors, or all $H_i$ to be trees). In 1987,
Alavi, Boals, Chartrand, Erd\H{o}s, and Oellerman suggested that some degree of similarity of $H_1, \dots, H_m$ can be still achieved \emph{without putting any additional restrictions on $G$ whatsoever} aside from the trivial condition that $e(G)=e(H_1)+\dots+e(H_m)$. Specifically, they called a decomposition of $G$ into $H_1, \dots, H_m$ \emph{ascending} if $e(H_i)=i$, and each $H_i$ is isomorphic to a subgraph of $H_{i+1}$. Since $e(H_1)+\dots+e(H_m)=\binom{m+1}2$, the trivial necessary condition for the existence of an ascending decomposition is $e(G)=\binom{m+1}2$. Alavi, Boals, Chartrand, Erd\H{o}s, and Oellerman \cite{original87} conjectured that this is also sufficient.
\begin{conjecture}[Alavi, Boals, Chartrand, Erd\H{o}s, and Oellerman \cite{original87}]
\label{Conjecture_main}
    Every graph $G$ with $\binom{m+1}2$ edges has an ascending subgraph decomposition, namely a decomposition $H_1, \ldots, H_m$ such that $e(H_i) = i$ and $H_i$ is a subgraph of $H_{i+1}$.
\end{conjecture}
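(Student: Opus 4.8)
The plan is to look for an ascending decomposition in which every $H_i$ is a \emph{star forest} (a vertex-disjoint union of stars), because for star forests the nesting requirement becomes purely combinatorial. Indeed, a star forest with component sizes $a_1 \ge \cdots \ge a_p$ embeds, as a subgraph, into one with component sizes $b_1 \ge \cdots \ge b_q$ if and only if $p \le q$ and $a_\ell \le b_\ell$ for all $\ell \le p$: greedily send the $\ell$-th largest star into the $\ell$-th largest, using that two vertex-disjoint stars never fit inside one star. Writing $P_i$ for the sorted component-size sequence of $H_i$ (a partition of $i$), it thus suffices to decompose $E(G)$ into star forests $H_1,\dots,H_m$ with $e(H_i)=i$ such that $P_i \preceq P_{i+1}$ for all $i$, and we are free to choose both the \emph{profile chain} $P_1\preceq\cdots\preceq P_m$ and the decomposition realising it. Two extreme chains are the basic building blocks: all $H_i$ matchings (profiles $1^i$), and all $H_i$ single stars (profiles $(i)$); the construction interpolates between these guided by the degree sequence of $G$, roughly putting the large star of each $H_i$ at a high-degree vertex and letting the remainder behave like a matching.

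\textbf{Dichotomy on $\Delta(G)$.} Fix a small constant $\delta>0$. If $\Delta(G)\le\delta m$, use matchings throughout: by Vizing's theorem $E(G)$ is a union of $r\le\delta m+1$ matchings $M_1,\dots,M_r$, so it is enough to partition the target sizes $\{1,\dots,m\}$ into $r$ blocks with sums $|M_1|,\dots,|M_r|$ and then chop each $M_j$ into matchings of exactly those sizes, which form a valid chain. Such a partition exists by a short greedy/exchange argument, since $r\ll m$ and the subset sums of $\{1,\dots,m\}$ cover all of $[0,\binom{m+1}{2}]$; this is the easy regime. If $\Delta(G)>\delta m$, pick a vertex $v$ of maximum degree and peel ``large stars at $v$'' off the top of the decomposition: let $H_m,H_{m-1},\dots$ be star forests whose large component is a star at $v$ (its small components a short matching in $G-v$, for flexibility), draining the edges at $v$ until its residual degree drops below $\delta m$; this only forces the chain below the current index to have the flexible shape ``one large star plus a matching''. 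After draining $v$ — and a bounded number of further above-threshold vertices, where one checks there cannot be many, since otherwise almost all edges of $G$ meet a set of $O(m)$ vertices and $G$ is essentially a dense graph on $\Theta(m)$ vertices, handled by the honest single-star decomposition as for $K_{m+1}$ — the residual graph has maximum degree below $\delta m$ and we are back in the easy regime.

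\textbf{The hard part.} The scheme above handles the large graphs $H_i$, but the small graphs $H_1,\dots,H_k$ (for $k$ comparatively small) are where the slack disappears: only $\binom{k+1}{2}$ edges remain for them, they may be spread very unevenly, and the dominance constraint is most binding at the bottom (e.g.\ a single star $K_{1,i}$ fits into $H_{i+1}$ only if $H_{i+1}$ has a component of size $\ge i$). The natural remedy — and, I expect, the real content — is an absorption-style reservation: before the bulk decomposition, set aside a sparse, highly flexible subgraph $F\subseteq G$ (say a matching of size a little more than $\binom{k+1}{2}$ together with a controlled family of extra edges, chosen so that $F$ can be carved into star forests realising any admissible chain of small profiles), do all the draining and edge-colouring in $G\setminus F$, and finally use $F$ both to supply the small graphs and to correct the exact sizes of the large ones. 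One then has to show that such an $F$ always exists (a large matching exists unless $G$ has few vertices, which is again the dense regime), and, crucially, that the decomposition of $G\setminus F$ can be made to \emph{dovetail} with $F$, i.e.\ to leave precisely the deficiency $F$ was built to fill. I expect this dovetailing — and more generally turning ``approximately the right piece sizes'', which greedy or random arguments give for free, into ``exactly the right sizes'' — to be the main obstacle and the place a genuinely new idea is needed; the rest should be casework plus standard matching, degeneracy and edge-colouring estimates.
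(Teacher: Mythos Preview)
Your proposal has a genuine structural gap, and it is not where you think it is.

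You aim for an ASD in which every $H_i$ is a star forest. This is strictly stronger than what the paper proves: it is the still-open conjecture of Faudree, Gy\'arf\'as, and Schelp (recorded as Conjecture~\ref{Conj_star_forests} in the conclusion). Your dichotomy on $\Delta(G)$ is the right first move and matches the paper's outer structure --- the case $\Delta(G)\le \delta m$ is indeed handled by matchings via Vizing (this is Lemma~\ref{lem:asd-bounded-degree}), and peeling stars off vertices of very large degree is exactly the paper's reduction in \S\ref{subsec:reduction}. But you misplace the difficulty. The hard regime is not the small graphs $H_1,\dots,H_k$; it is the intermediate case you dismiss in half a sentence, where $G$ is dense on $\Theta(m)$ vertices with $\Delta(G)=\Theta(m)$. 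Your claim that such a $G$ is ``handled by the honest single-star decomposition as for $K_{m+1}$'' is false in general. Take $G$ to be, say, an $\tfrac{n}{2}$-regular graph on $n$ vertices, so $m\approx n/\sqrt{2}$: every vertex has degree above your threshold $\delta m$, so your peeling never reaches the easy regime in a bounded number of rounds; yet no vertex has degree $\ge m$, so no single star can serve as $H_m$. Nothing in your outline addresses this, and ``star plus matching'' does not obviously suffice either.

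The paper spends almost all of its technical work on precisely this regime (Lemma~\ref{lem:max-deg-linear}), and the decisive idea has no analogue in your plan: the subgraph induced on the high-degree vertices is nearly decomposed into isomorphic $K_{t,t}$-forests via K\H{o}v\'ari--S\'os--Tur\'an and Pippenger--Spencer (Lemma~\ref{lem:Ktt-decomposition}). These are merged with star forests from the low-degree part and with large matchings into $\Theta(m)$ isomorphic $2$-divisible pieces, each carrying a long isolated matching; the pieces are then split in half and randomly trimmed to hit the exact target sizes, and the low-degree leftover is absorbed by Lemma~\ref{lem:asd-bounded-degree}. The resulting $H_i$ contain copies of $K_{t,t}$ and are \emph{not} star forests. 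The paper explicitly notes that its key intermediate step --- nearly decomposing a graph with $\Theta(m^2)$ edges into $\eps m$ isomorphic subgraphs --- provably fails if one insists on star forests (already for $K_m$), so achieving your target structure would require a genuinely different argument, not a cleanup of the one you sketch.
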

This conjecture does not prescribe the graphs $H_1, \dots, H_m$ in the decomposition as much as the conjectures of, e.g., Gy\'arf\'as or Ringel do. However, this is necessary if one wants to prove a decomposition statement that holds \emph{for all graphs}. Indeed, if $G$ is a matching, then the only subgraphs it has are matchings and therefore in any decomposition, all $H_i$'s must be matchings. Similarly, if $G$ is a star, then in any decomposition, all $H_i$'s must be stars. So, in order for a decomposition result to hold for all possible host graphs $G$, the result must allow for using different $H_i$'s for different host graphs.
        
There are several partial results that find an ascending subgraph decomposition when 
\(G\) lies in a restricted class of graphs.
In~\cite{original87} the authors verified the conjecture when \(G\) has maximum degree at most \(2\), using only matchings in the decomposition.
After partial results~\cite{original87,faudree87,faudree88}
Faudree and Gould~\cite{faudree90} proved that forests have an ascending subgraph decomposition into star forests.
The case of regular graphs was settled by Fu and Hu~\cite{fu2002regular} using linear forests in the decomposition.
Faudree, Gould, Jacobson, and Lesniak~\cite{faudree88} proved the conjecture for \(m\) sufficiently large and \(G\) of maximum degree at most
\((2\sqrt{2}-2)m\), where the graphs in the decomposition are short paths. This extends a previous result of Fu~\cite{fu1990note} who proved the conjecture for graphs of degree at most \(m/2\).

Finding ascending decompositions can be difficult and interesting even for rather specific host graphs. For example
Ma, Zhou, and Zhou~\cite{ma-zhou-zhou} proved that a star forest with components of size
at least \(m\) has an ascending subgraph decomposition into stars.
Here and throughout the paper, the \emph{size} of a graph is the number of its edges.
This was previously another conjecture of Alavi, Boals, Chartrand, Erd\H{o}s, and Oellerman which is appealing due to having an entirely number-theoretic formulation. In this case the problem is equivalent to saying that for any set of numbers $a_1, \dots, a_t\geq m$ with $a_1+\dots+a_t=m(m+1)/2$ it is possible to decompose the interval $[m]$ into sets $A_1, \dots, A_t$ with the numbers in each $A_i$ summing to $a_i$. For a comprehensive survey of prior results in this area we refer the reader to~\cite[Chapter 8]{survey}.

Our main theorem resolves the ascending subgraph decomposition conjecture for all large enough graphs.
\begin{theorem} \label{thm:main}
    Let $m$ be a sufficiently large integer. Then every graph with $\binom{m+1}{2}$ edges has an ascending subgraph decomposition.
\end{theorem}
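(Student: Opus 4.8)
The plan is to treat the statement as a structured packing problem: fix, for each host $G$, a \emph{nested} family of abstract graphs $A_1\subseteq A_2\subseteq\cdots\subseteq A_m$ with $e(A_i)=i$, and find edge-disjoint copies of the $A_i$ that partition $E(G)$; since $H_i$ embeds into $H_{i+1}$ this loses nothing. Recording, for each edge of $G$, \emph{which role} it plays inside its template turns the task into a constrained edge-colouring of $G$. I would split $G$ into a high-min-degree part $G_1$ carrying the bulk of the decomposition and a sparse quasirandom reservoir $G_2$ kept aside for flexibility, and run a dichotomy on $\Delta(G)$.

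\emph{Regime I: $\Delta(G)\le(1-\eps)m$.} Here the templates are linear forests, built one edge at a time, so $A_i$ is the prefix of size $i$. With the reservoir removed, $G_1$ still has large min degree on its non-isolated vertices, so paths can always be extended or re-routed; I would embed the template edges in batches via a matching/flow argument, then use $G_2$ as an absorber to mop up the leftover edges and correct the piece sizes to be exactly $1,\dots,m$. This step must subsume, and considerably strengthen, the bounded-degree theorems of Fu~\cite{fu1990note} and Faudree--Gould--Jacobson--Lesniak~\cite{faudree88}, which only reach $\Delta\le(2\sqrt2-2)m$: to push all the way up to $(1-\eps)m$, so that near-$K_{m+1}$ graphs are covered, short paths no longer suffice and one needs genuine decomposition-into-linear-forests technology together with careful control of parities and of the running edge count.

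\emph{Regime II: $\Delta(G)>(1-\eps)m$.} Iteratively peel every vertex of degree $>(1-\eps)m$, recording its star $K_{1,s_j}$; the leftover $G'$ has $\Delta(G')\le(1-\eps)m$ and $e(G')=\binom{m+1}2-\sum_j s_j$. Now apply Regime~I to $G'$, but with the enriched template $A_i=\big(\bigsqcup_j K_{1,t_{i,j}}\big)\sqcup L_i$, where $L_i$ is a nested linear forest inside $G'$ and, for each peeled vertex $v_j$, the numbers $t_{i,j}$ form a \emph{schedule} of how its star is distributed among $H_1,\dots,H_m$: nonnegative, nondecreasing in $i$ so the star parts nest, with $\sum_i t_{i,j}=s_j$ and $\sum_j t_{i,j}\le i$ leaving exactly the right residual for $L_i$. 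Producing such a schedule is a number-theoretic problem generalising the interval-partition result of Ma--Zhou--Zhou~\cite{ma-zhou-zhou} (the case $G'$ empty, all $s_j\ge m$); the new point is to couple it to the linear-forest schedule on $G'$ so that the per-piece totals come out to exactly $1,2,\dots,m$.

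The main obstacle is twofold. First, Regime~I in the dense range $\Delta(G)$ near $m$: one must robustly decompose an essentially $K_{m+1}$-like graph into nested linear forests of prescribed sizes while retaining enough slack for the absorber, which is precisely what the $G_1/G_2$ split and the flexible-cover machinery are for. Second, \emph{exactness}: because $G$ is arbitrary there is no divisibility to lean on, so the reservoir $G_2$ must be shown able to correct an arbitrary bounded discrepancy in the sizes, simultaneously for all pieces and without breaking $H_i\subseteq H_{i+1}$, and when peeled stars and a dense remainder coexist this correction must be made compatible with the star schedule. Designing $G_2$ and proving this robustness is, I expect, the most delicate part of the argument.
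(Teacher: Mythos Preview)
What you have written is a research outline, not a proof: both of the load-bearing steps are stated as goals rather than achieved. You say yourself that Regime~I ``must subsume, and considerably strengthen'' the $(2\sqrt2-2)m$ result and that it needs ``genuine decomposition-into-linear-forests technology'', and that ``designing $G_2$ and proving this robustness is \dots\ the most delicate part''. Those are exactly the parts where the content of a proof would live, and they are absent. The absorber idea in particular is not obviously viable here: absorption arguments lean on quasirandomness or minimum-degree structure in the reservoir, but $G$ is an \emph{arbitrary} graph with $\binom{m+1}{2}$ edges, and a random edge-subset $G_2$ of such a $G$ need not have any expansion, connectivity, or minimum-degree property at all (think of $G$ a disjoint union of cliques, or a dense bipartite graph with very unbalanced sides). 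Nothing in your sketch explains why $G_2$ can correct \emph{arbitrary} bounded discrepancies in all $m$ pieces simultaneously while preserving the nesting $H_i\subseteq H_{i+1}$.

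The paper's argument is organised around the same dichotomy (peel high-degree stars, handle the bounded-degree remainder) but the execution is quite different and avoids both obstacles you flag. First, the peeling threshold is $c\sqrt{e(G_i)}$ with $c=10^6$, not $(1-\eps)m$; this means the ``bounded-degree'' regime is $\Delta(G)\le 10^6 m$, which is an enormous amount of slack rather than the tight $K_{m+1}$-like situation your Regime~I must confront. Second, the bounded-degree case is handled not with linear forests but by almost-decomposing into \emph{isomorphic} graphs whose components are stars and $K_{t,t}$'s (via K\H{o}v\'ari--S\'os--Tur\'an, projective planes, and Pippenger--Spencer), each containing a large isolated matching. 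Third, exactness is not obtained by absorption at all: instead one \emph{randomly deletes} an isolated matching of the right size from each piece, and Chernoff shows the union of these deleted matchings has small maximum degree, so Fu's bounded-degree result (\Cref{lem:asd-bounded-degree}) decomposes it into the first $b$ matchings of the ASD. The number-theoretic schedule for the peeled stars is proved explicitly (\Cref{Theorem_stars_introduction}), generalising Ma--Zhou--Zhou to allow small early stars. In short, the paper sidesteps both of your self-identified difficulties by choosing a looser degree threshold and replacing the absorber with a random-deletion-plus-Fu endgame; your route may be workable, but as written it leaves precisely the hard parts to be done.
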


As an intermediate step towards proving this theorem, we show if $G$ is a star forest where the $i^{\text{th}}$ star has size at least $\min\{1600i, 20(m+1)\}$, then $G$ has an ascending subgraph decomposition into stars. This statement is more flexible than the one by Ma, Zhou, and Zhou~\cite{ma-zhou-zhou} (mentioned above), since it allows for initial stars to be small. This result also has a purely number-theoretic formulation.
\begin{theorem}\label{Theorem_stars_introduction}
    For any set of numbers $a_1, \dots, a_t$ with $a_1+\dots+a_t=m(m+1)/2$ and each $a_i\geq \min\{1600i, 20(m+1)\}$, it is possible to decompose the interval $[m]$ into sets $A_1, \dots, A_t$ with the numbers in each $A_i$ summing to $a_i$.
\end{theorem}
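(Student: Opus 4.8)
The plan is to reduce to the case where the targets are sorted and then build the sets $A_t, A_{t-1}, \dots, A_1$ greedily, assigning to each $A_i$ a block of consecutive integers just below a moving frontier, plus a small correction drawn from a set reserved in advance.

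First I would argue that we may assume $a_1 \le a_2 \le \dots \le a_t$. If we relabel so that this holds, the hypothesis survives: if the $j$ smallest among the original values occupy original positions $i_1 < \dots < i_j$ then $i_j \ge j$, so the $j$-th smallest value is at least $a_{i_j} \ge \min\{1600 i_j,\, 20(m+1)\} \ge \min\{1600 j,\, 20(m+1)\}$. Two crude bounds follow. Any index $i$ with $a_i < 20(m+1)$ has $1600 i \le a_i$, so $i \le (m+1)/80$; and there are at most $\binom{m+1}{2}/(20(m+1)) = m/40$ indices with $a_i \ge 20(m+1)$; hence $t \le m/20$, say. Moreover, for any $Y < 20(m+1)$ the number of parts of size at most $Y$ is at most $Y/1600$, since such a part has an index $i$ with $1600 i \le a_i \le Y$.

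I would then fix in advance a small \emph{flexible} set $F \subseteq [m]$ whose family of subset sums is an interval $\{0, 1, \dots, \Sigma_F\}$ with $\Sigma_F$ comfortably larger than $m$, chosen so that $F$ retains this interval-covering property even after any bounded collection of its elements is deleted (for instance, a union of suitably overlapping short intervals placed at a geometric sequence of scales), and so that $F$ — together with a short initial segment of $[m]$ set aside with it — can host the smallest few parts. Then process the targets in decreasing order, maintaining the invariant that the unused elements consist of an interval $[1,\ell]$ together with the currently unused part of $F$. For the current target $a$, take the longest suffix $[p{+}1,\ell]$ of $[1,\ell]$ of sum at most $a$; the deficit $\rho := a - \sum_{j=p+1}^{\ell} j$ then satisfies $0 \le \rho \le p$, so one can finish $A_i := [p{+}1,\ell] \cup T$ with $T \subseteq F$ of sum $\rho$, then delete $T$ from $F$ and set $\ell \leftarrow p$. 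Because we go from the largest target downwards and $\sum a_i = \binom{m+1}{2}$ equals the total mass available, the frontier decreases to $0$ and every element is used, the last correction mopping up the final piece.

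The heart of the argument is to certify that $F$ never runs out — that a subset $T \subseteq F$ of the required sum always exists among the still-unused elements of $F$ — and, hand in hand with this, that the smallest parts can still be realised exactly once $[1,\ell]$ has shrunk. This is precisely where the full hypothesis $a_i \ge 1600 i$, rather than merely $a_i \ge 1600$, is needed: it caps the number of parts (hence the number of corrections), it bounds the total mass the small parts can absorb, and it forces the small parts to grow, so that their demands for small integers neither collide with one another nor exhaust $F$. Concretely I would handle the few parts of size below the flexible threshold first, carving their sets out of a dedicated pool of small integers, and then run the main loop while tracking the slack between the remaining capacity of $F$ and the largest possible total of the deficits still to come; the two counting bounds above are exactly what keeps this slack positive. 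Verifying this bookkeeping — in particular that $F$ can be chosen of the right size and granularity while still leaving enough room for both the smallest and the largest parts — is the one genuinely delicate point, and I would expect the constants $1600$ and $20(m+1)$ to be tuned exactly for it.
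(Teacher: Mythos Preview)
Your approach is entirely different from the paper's, and the step you call ``the one genuinely delicate point'' is in fact the entire argument. The paper never introduces a frontier or a reserve. After reducing to even $m$ it works with the pairing $P_m = \{\{x, m+1-x\} : x \in [m/2]\}$: if every $a_i$ is a multiple of $m+1$, any $a_i/(m+1)$ pairs settle it. Otherwise, for the smallest remaining target $a_i$ with $a_i < 3(m+1)$, it finds still-available $x<y$ with $x+y \equiv a_i \pmod{m+1}$ and $x+y \le a_i$, commits $\{x,y\}$ towards $a_i$ and the complementary $\{m+1-x, m+1-y\}$ towards the currently largest target, then recurses with one fewer small target and at most four fewer pairs. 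The hypothesis $a_i \ge 1600i$ enters only to guarantee at least $8i-1$ eligible pairs $(x,y)$, which beats the number of pairs already removed. Once all remaining targets exceed $3(m+1)$ a separate argument (again using pairs from $P_m$) finishes. The available pool stays a subset of $P_m$ throughout, so there is no boundary to reconcile at the end.

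That reconciliation is exactly what your outline leaves open, and it is not just bookkeeping. Your invariant ``unused $= [1,\ell]$ together with the unused part of $F$'' is ill-posed once the frontier $\ell$ enters the range of $F$: an element of $F$ already spent as a correction would then lie inside a later suffix $[p+1,\ell]$. If instead you keep $\ell > \max F$ throughout, the suffixes tile some interval $[p_1+1, m]$ and exact decomposition forces the corrections together with the small-target pool to cover precisely $[1,p_1]$ --- but $p_1$ and the deficits $\rho_i$ are outputs of the greedy rule, not parameters you set, so $F$ cannot be chosen in advance to match them. Nor are the deficits small: with all $a_i = 20(m+1)$ the successive deficits are $210, 610, 1010, \ldots$, growing by $400$ per step, so already the first $m/420$ corrections sum to order $m^2$, and a reserve with $\Sigma_F$ merely ``comfortably larger than $m$'' is too small by a factor of $m$. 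Making the scheme work would require a genuine extra mechanism --- returning elements to the pool, taking non-contiguous blocks, or a two-sided adjustment like the paper's pairing --- and the sketch does not supply one.
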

 
We also prove the following statement that may be of independent interest.  It says that any graph \(G\) with $\Theta(m^2)$ edges and maximum degree $O(m)$ can be decomposed into \(\Theta(m)\) pairwise isomorphic graphs plus \(o(m^2)\) edges; see \Cref{lem:approx-isomorphic-decomposition}.

\paragraph{Notation.}
We use standard asymptotic notation throughout. For positive real functions \(f,g\) of a positive variable \(n\) we write \(f = O(g)\) if the limit $\limsup_{n \to \infty} f(n)/g(n)$ is finite,
and write \(f=o(g)\) if the limit is \(0\).

\Cref{thm:main} follows by combining \Cref{Theorem_stars_introduction} above with \Cref{lem:max-deg-linear} below, which shows that a graph 
with \(\binom{m+1}{2}\) edges and maximum degree \(O(m)\) has an ascending subgraph decomposition (we sometimes abbreviate this to ASD).
Before describing how we deal with each component separately, let us sketch how \Cref{thm:main} follows from \Cref{Theorem_stars_introduction} and \Cref{lem:max-deg-linear}. First note that \Cref{Theorem_stars_introduction} implies the following.
\begin{theorem} \label{thm:main-stars}
    Let $G$ be an edge-disjoint union of stars with $\binom{m+1}{2}$ edges, where the $i^{\text{th}}$ star has size at least $\min\{1600i, 20(m+1)\}$. Then $G$ has an ascending subgraph decomposition into stars.
\end{theorem}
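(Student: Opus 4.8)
The plan is to derive \Cref{thm:main-stars} directly from \Cref{Theorem_stars_introduction} by translating between the graph-theoretic and number-theoretic pictures. Write $G = S_1 \cup \dots \cup S_t$, where $S_i$ is a star with $a_i := e(S_i)$ edges and the union is edge-disjoint. By hypothesis $a_1 + \dots + a_t = e(G) = \binom{m+1}{2} = m(m+1)/2$, and $a_i \ge \min\{1600i,\, 20(m+1)\}$ for each $i$, so \Cref{Theorem_stars_introduction} applies: there is a partition $[m] = A_1 \cup \dots \cup A_t$ such that $\sum_{k \in A_i} k = a_i$ for every $i \in [t]$.

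Next I would assemble the decomposition. Fix $i \in [t]$ and let $v_i$ be the centre of $S_i$ (if $S_i$ is a single edge, take either endpoint). Since $S_i$ has exactly $a_i = \sum_{k \in A_i} k$ edges, partition its edge set into $|A_i|$ blocks, one block $E_{i,k}$ of size $k$ for each $k \in A_i$; each $E_{i,k}$ forms a star $H_k$ with centre $v_i$ and $k$ edges. Ranging over all $i$ and using that the sets $A_i$ partition $[m]$, this produces exactly one star $H_k$ for each $k \in [m]$, with $e(H_k) = k$. Each $H_k$ is a subgraph of $S_i \subseteq G$; the $H_k$ are pairwise edge-disjoint, since the $S_i$ are edge-disjoint and the blocks $E_{i,k}$ partition $E(S_i)$ within each $i$; and $\bigcup_{k \in [m]} E(H_k) = \bigcup_{i \in [t]} E(S_i) = E(G)$. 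Hence $H_1, \dots, H_m$ is a decomposition of $G$ into stars with $e(H_k) = k$.

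Finally it remains to verify the ascending condition $H_k \subseteq H_{k+1}$ (up to isomorphism) for $k \in [m-1]$, but this is automatic for stars: a star with $k$ edges is isomorphic to a subgraph of any star with at least $k$ edges, and $H_{k+1}$ has $k+1$ edges. Thus $H_1, \dots, H_m$ is an ascending subgraph decomposition of $G$, which proves \Cref{thm:main-stars}. I do not expect any real obstacle in this reduction: all of the difficulty lies in \Cref{Theorem_stars_introduction} itself, i.e.\ in partitioning the interval $[m]$ into parts with prescribed sums $a_1,\dots,a_t$ even when some targets are as small as roughly $1600$ and others are as large as $\Theta(m^2)$; the passage from that statement to the statement about star forests is a purely formal dictionary, exploiting the fact that every subgraph of a star is a star and that nesting of stars by size is trivial.
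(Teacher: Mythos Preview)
Your proposal is correct and matches the paper's approach exactly: the paper simply states that \Cref{Theorem_stars_introduction} implies \Cref{thm:main-stars} without spelling out the details, and your argument is precisely the intended dictionary between the number-theoretic partition of $[m]$ and the edge-partition of the star forest. There is nothing to add.
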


\paragraph{Proof of \Cref{thm:main} using \Cref{thm:main-stars} and \Cref{lem:max-deg-linear}.}
Suppose \(G\) is an arbitrary graph with \(\binom{m+1}{2}\) edges.
Set \(G_0 = G\), and repeat the following: for \(i\ge 1\) let \(v_i \in V(G_{i-1})\) be a vertex of degree at least \(\Omega\left(\sqrt{e(G_{i-1}})\right)\), if it exists, and let \(G_i := G_{i-1} \setminus \{v_i\}\).
We can continue this process until  we reach a graph \(G'\) of maximum degree 
\(O\left(\sqrt{e(G')}\right)\), which by \Cref{lem:max-deg-linear} has an ASD denoted \(H_1,\hdots, H_k\) for $k\approx \sqrt{2e(G')}$.
A technicality here is that \(e(G')\) might not be a binomial coefficient. 
However, our argument gives an ASD also for such graphs, for a natural generalisation of an ASD (see the beginning of \Cref{subsec:reduction} for the definition).
For now let us ignore this technicality and assume that $e(G')$ is a binomial coefficient, i.e.\ $e(G') = \binom{k+1}{2}$ for some integer $k$.
Let \(G\dprm = G \setminus G'\).
Then \(G\dprm\) is an edge-disjoint union of large stars, 
which readily implies that inside \(G\dprm\) we can find isomorphic stars
\(
\hS_1,\hdots, \hS_k
\)
of size $m-k$
such that \(\hS_i\) is vertex-disjoint of \(H_i\).
Then we set \(\hG_i = H_i \cup \hS_i\),
and observe that \(\hG_{i}\) is isomorphic to a subgraph of \(\hG_{i+1}\).
The graphs \(\hG_1,\hdots, \hG_k\) will be the last \(k\) graphs in the ASD of \(G\).
Finally, by taking some extra care when picking the stars $\hS_i$, we may assume that \(G\dprm - \bigcup_i \hS_i\) is still an edge-disjoint union of large stars, which by \Cref{thm:main-stars} has an ASD into stars $S_1, \ldots, S_{m-k}$.
These stars will be the first \(m-k\) graphs in the ASD of \(G\), and along with \(\hG_1,\hdots, \hG_k\) they yield a complete ASD of \(G\).

We prove \Cref{thm:main} in \Cref{sec:reduction}. We next sketch the other two main parts of the proof.

\paragraph{\Cref{thm:main-stars}.} 
The proof of this is easiest to explain in the number theoretic formulation given in \Cref{Theorem_stars_introduction}.
If we assume that \(m\) is even (the odd case reduces to the even case),
then
\(\sum_i a_i = \frac{m}{2} (m+1)\) is divisible by \(m+1\).
Suppose first that each \(a_i\) is divisible by \(m+1\), i.e.\ 
\(a_i = \lambda_i (m+1)\) for some positive integer \(\lambda_i\), so 
\(\sum_{i} \lambda_i = m/2\).
Then 
the sets \(\{x, m+1-x\}\), where 
\(x\in [m/2]\), partition \([m]\),
and any \(\lambda_i\) of them sum to \(a_i\).
Hence  we can set \(
A_i\) to be any \(\lambda_i\) of these pairs, such that each pair is used by exactly one $A_i$.
We reduce the general case to the above setup by iterating the following procedure: note that since $\sum a_i\equiv 0 \pmod {m+1}$, there cannot be just one $a_i$ that is not divisible by $m+1$ --- so we have distinct $a_i$ and $a_j$ which are not divisible by $m+1$. Pick   \(x\neq y \in [m/2]\), with \(a_i \equiv x+y \pmod{m+1}\). Replace $a_i$ by $a_i':=a_i-x-y$, $a_j$ by $a_j':=a_j-(m+1-x) -(m+1-y)$ and remove the elements $x, m+1-x, y, m+1-y$ from $[m]$. Note that we need here to guarantee $a_i', a_j'>0$. This is done by appropriately choosing $x, y$ and using the condition of the theorem on the size of the $i^{\text{th}}$ component of a star forest.
The effect is that we have reduced the number of terms not divisible by $m+1$, and hence, by repeating this procedure we end up in the situation when all $a_i$ are divisible by $m+1$, which we already know how to solve. \Cref{thm:main-stars} is proved in \Cref{subsec:number-theory}.

\paragraph{\Cref{lem:max-deg-linear}.} Now we sketch the proof that every \(G\) with maximum degree at most \(O(m)\) has an ASD.
The main idea is to almost decompose \(G\) in several stages, so that at each stage the decomposition consists of ``nice'' graphs which at the very end can be combined to form an ASD.
First, we decompose edges incident to small degree vertices (namely, at most $cm$ for some appropriate constant $c$) into isomorphic star forests 
and a remainder that has few edges (cf.\ \Cref{lem:approx-star-forest-decomposition}).
For this step, we first decompose edges with one large degree vertex and one small degree vertex into isomorphic star forests, via \Cref{lem:star-forests}, then we decompose edges incident to only small degree vertices via Vizing's theorem and \Cref{lem:balancing-matching}, and finally we combine the star forests and matchings via \Cref{lem:large-matching-forest}.
Second, we almost decompose the edges incident only to large degree vertices into copies of complete bipartite graphs (cf.\ \Cref{lem:Ktt-decomposition}). 
This gives an almost decomposition of \(G\) into isomorphic ``forests'' whose components are stars and complete bipartite graphs, and each forest contains a large matching (cf.\ \Cref{lem:approx-isomorphic-decomposition}).
Third, by carefully rearranging the graphs in the decomposition, we obtain an ``approximate'' ASD consisting of a remainder \(R\) of small maximum degree; and graphs
\((H_1,\hdots, H_{m'})\), where each \(H_i\) has a large isolated matching (i.e.\ a matching touching no other edges of $H_i$), and \(H_i\) is isomorphic to a subgraph of \(H_{i+1}\)
(cf.\ \Cref{lem:approx-isomorphic-decomposition-stronger}). 
In this step it is crucial that we are working with a graph having maximum degree $O(m)$. If this were not the case, then the graph need not have any large matchings at all.
These will be the basis for the last \(m'\) graphs in the ASD of \(G\).
Fourth, in \Cref{lem:approx-ascending-decomposition}, we randomly remove an isolated matching 
\(M_i\) of each \(H_i\) so that \(H_i\setminus M_i\) has the correct number of edges for its position in the ASD.
Let \(F= \bigcup_i M_i\).
Then from a standard concentration bound (Chernoff's bound) it follows that each vertex has small degree in \(F\), and hence the maximum degree of \(F\cup R\) is small.
Therefore we can find an ASD of \(F\cup R\) into matchings by using a result of Fu~\cite{fu1990note} about ascending subgraphs decompositions into matchings of graphs with small maximum degree (cf.\ \Cref{lem:asd-bounded-degree}).
By construction, the graphs \(H_i\setminus M_i\) contain a large matching, so that the ASD of \(F\cup R\) into matchings combined with \(H_1\setminus M_1, \hdots, H_{m\prm} \setminus M_{m\prm}\) give an ASD of \(G\).
The proof of \Cref{lem:max-deg-linear} is given at the end of \Cref{sec:bounded-degree}.

\section{Finding ascending subgraph decompositions} \label{sec:reduction}

In  \cref{subsec:number-theory} we prove Theorem~\ref{Theorem_stars_introduction} about decomposing the interval $[m]$ into sets summing to $a_1, \ldots, a_k$, for any such sequence with appropriate properties.
In \cref{subsec:reduction} we use it to reduce Theorem~\ref{thm:main} to graphs with linear maximum degree.

\subsection{Ascending star decompositions} \label{subsec:number-theory}
	The goal of this section is to prove \Cref{Theorem_stars_introduction} (and \Cref{thm:main-stars} which immediately follows from it). We now introduce some notation.   
    Given a sequence of positive integers
    \(a_1,\hdots,a_k \),
    we say \(R\subseteq \nats\) \emph{separates} \(a_1,\hdots, a_k\) if there exists a partition
    \(I_1,\hdots, I_k\) of \(R\)
    such that 
    \(a_i = \sum_{x\in I_i} x\).
	The next lemma is an equivalent formulation of \Cref{Theorem_stars_introduction}.    
	\begin{lemma}[Equivalent formulation of Theorem~\ref{Theorem_stars_introduction}] \label{lem:number-theory}
        Let \(k,m\) be positive integers.
        Let $a_1\leq \ldots\leq a_k$ be a sequence of positive integers such that $\sum_i a_i = \binom{m+1}{2}$, and 
		$a_i \ge \min \{ 1600 i, 20(m+1)\}$ for $i \in [k]$. Then \([m]\) separates \(a_1,\hdots, a_k\).
    \end{lemma}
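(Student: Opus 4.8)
The plan is to use the partition of $[m]$ into the $m/2$ pairs $P_x=\{x,\,m+1-x\}$, $x\in[m/2]$, each summing to $m+1$: if every $a_i$ is a multiple of $m+1$, say $a_i=\lambda_i(m+1)$, then since $\lambda_i\ge 1$ and $\sum_i\lambda_i=\binom{m+1}{2}/(m+1)=m/2$ we may simply give each $I_i$ a private collection of $\lambda_i$ of the pairs. So the whole problem is to ``repair'' the residues $r_i:=a_i\bmod(m+1)$. Two quick reductions first. One: $\sum_i a_i=\binom{m+1}{2}$ together with $a_i\ge\min\{1600i,20(m+1)\}$ forces $k=O(m)$; I would record this at the outset, since it guarantees that only $O(m)$ of the $m/2$ pairs are ever touched by the repair process below, so fresh pairs are always at hand. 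Two: when $m$ is odd I would remove the element $m$ from $[m]$ and subtract $m$ from $a_k$ (checking, via the $k=O(m)$ bound, that the hypotheses persist with $m$ replaced by $m-1$); so from now on $m$ is even.

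The core is an iteration on the number of \emph{defective} indices (those with $r_i\ne 0$), which I keep decreasing while maintaining the invariant that the sum of the parts equals the sum of the surviving ground set and the surviving ground set is $[m]$ minus a union of complete pairs. The basic move (``pair‑swap''): pick a defective $i$, pick a small set $X$ of surviving lower‑half elements with $\sum_{x\in X}x\equiv a_i\pmod{m+1}$ and $\sum_{x\in X}x\le a_i$ — here $|X|\le 2$ works for every residue except $1,2,m$, for which $|X|=3$ is needed, and the inequality $\sum X\le a_i$ holds because $a_i\ge 1600$ rules out the only obstructive (tiny) values of $a_i$ — then put $X$ into $I_i$, give the partners $\{m+1-x:x\in X\}$ to some other defective index $j$, and delete the pairs $\{P_x:x\in X\}$. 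This makes $a_i$ a multiple of $m+1$ (so the defective count drops by at least one) and decreases $a_j$ by at most $3(m+1)$. I would apply this move as long as some defective part still has value at least $3(m+1)$, always taking $j$ to be such a part so that $a_j$ stays non‑negative; since there are at most $k=O(m)$ moves, fewer than $m/2$ pairs get consumed in total.

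It remains to handle the terminal configuration, in which every remaining defective part is small (value below $3(m+1)$). Here the defective residues sum to a multiple of $m+1$ (the good parts contribute $0$ and the total is $\equiv\binom{m+1}{2}\equiv 0$), hence the \emph{values} of the defective parts sum to some $c(m+1)$; I would then reserve $c$ (in fact, for technical reasons, somewhat more) of the surviving pairs and assign the small defective parts pairwise‑disjoint blocks of $[m]$ lying inside the reserved pairs — no single block need be a union of pairs, the blocks ``cross'' pairs, but their union is exactly a union of complete pairs — after which the remaining (now all divisible) parts are served their quotas of the untouched pairs exactly as in the easy case. That this reservation‑and‑packing step can always be carried out is a self‑contained finite claim whose proof uses that the number of small defective parts is $O(m)$ with a tiny constant (at most about $3(m+1)/1600$, by $a_i\ge 1600i$) and that almost all of $[m]$ is still untouched.

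The step I expect to be the main obstacle is precisely the bookkeeping that keeps every $a_i$ non‑negative throughout the iteration (a part reaching $0$ has already received a non‑empty block, so non‑negativity is the true constraint). Both halves of the hypothesis $a_i\ge\min\{1600i,20(m+1)\}$ pull their weight here: the $20(m+1)$ term, together with $\sum a_i=\binom{m+1}{2}$, is what bounds $k$ and keeps large parts comfortably above the $\Theta(m)$ subtracted from an absorber, while the $1600i$ term bounds the number of small parts so that the terminal packing goes through. The odd‑$m$ reduction and the handling of the boundary residues $1,2,m$ are where the bulk of the routine case‑checking sits.
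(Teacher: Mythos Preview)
Your plan matches the paper's almost exactly: the pair structure $\{x,m+1-x\}$, the odd-to-even reduction by stripping $m$ from $a_k$, the repair move that assigns a small set to one index and the partner elements to another, and the divisible endgame are all there. One minor point: restricting $X$ to the lower half $[m/2]$ is unnecessary and is what manufactures your residue-$1,2,m$ exceptions. The paper instead takes any $(x,y)$ with $1\le x<y\le m$, $x+y\equiv a_i\pmod{m+1}$ and $x+y\le a_i$, and shows there are at least $\min\{a_i/2,m/2\}-1$ such pairs, so two elements always suffice.

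The substantive gap is the terminal step. Your bound ``at most about $3(m+1)/1600$ small defective parts, by $a_i\ge 1600i$'' appeals to the \emph{original} values $a_i$, but any absorber $j$ can be driven below $3(m+1)$ and left defective with an arbitrarily small value; to control this you must specify (and you do not) that $i$ is always chosen to be a small defective when one exists, and then argue that the count of small defectives never exceeds its initial value. More seriously, even granting that bound, once every remaining defective has value below $3(m+1)$ no single one of them can absorb the partners from more than one further swap, so the ``self-contained finite'' packing you defer is essentially the original problem again with \emph{no large absorber available}---this is exactly the hard regime, not a bookkeeping afterthought. The paper sidesteps it by running the induction the other way: it fully serves the smallest remaining $a_i$ at each step (assigning $\{x,y\}$ and then $0$, $1$ or $2$ whole pairs to finish it off), dumps the partners onto the current \emph{largest} part, and maintains an invariant of the form $\sum_{j\ge i}a_j\ge 5(m+1)(k-i+1)$ so that this largest part never drops below $5(m+1)$. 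The induction therefore terminates in the regime where every remaining part is at least $3(m+1)$, which a separate clean claim disposes of. Reorganising your iteration to end in the all-large configuration rather than the all-small one is the missing ingredient.
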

    \begin{proof}
    Let $k'$ be maximal such that $a_{k'} < 20(m+1)$. Then
    \[
        \binom{m+1}{2} \ge \sum_{i=1}^{k'}a_i\geq \sum_{i = 1}^{k'} 1600i = 1600 \binom{k'+1}{2} \ge \binom{40k'+1}{2},
    \]
    so \(k'\le m/40\).
    Additionally, $k - k' \le \binom{m+1}{2} / 20(m+1) = m/40$. Altogether, $k \le m/20$. 
    This also shows that $a_k\ge 20(m+1)$ (otherwise, $k = k' \le m/40$ and then $a_{k'} \ge \binom{m+1}{2}/k' \ge 20(m+1)$, a contradiction).
    
    If $m$ is odd, we set \(m\prm := m-1\) and 
	\(a_k\prm := a_k-m \ge 19(m\prm+1)\), so that \(a_1,\hdots,a_{k-1},a_k\prm\) is a sequence of positive integers summing to \(\binom{m\prm +1}{2}\) such that the \(i^{\text{th}}\) term is at least \(\min \{ 1600 i, 19(m\prm+1)\}\), and then it suffices to show \([m\prm]\) separates \(a_1,\hdots,a_{k-1},a_k\prm\).
    In this case we have $k \le (m'+1)/20 \le m'/16$ (using $m' \ge 4$, which follows implicitly from the assumptions).
    
    Thus, from now on we assume that $m$ is even, $a_i \ge \min\{1600i, 19(m+1)\}$ for $i \in [k]$, and $k \le m/16$.

    Let
    \(P_m := \left\{ \{x, m+1-x\}: x \in [m/2] \right\} \).
    For \(S\subseteq P_m\), we say that \(S\) \emph{separates} \(a_1,\hdots, a_k\) if there exists \(S\prm \subseteq S\) such that \(\bigcup S\prm\) separates \(a_1,\hdots, a_k\);
    if \(S\prm = S\) we say \(S\) separates the sequence \emph{perfectly}.
    
    \begin{innerclaim} \label{claim:ASD-forest:all-divisible}
		Let $a_1, \ldots, a_k$ be a sequence of positive integers, such that \(a_i = \lambda_i( m+1)\) for some positive integer $\lambda_i$, for every $i \in [k]$.
         Let
        \(S\subseteq P_m \)
        with 
        \(\abs{S}  \ge \sum_i \lambda_i\).
        Then $S$ separates $a_1, \ldots, a_k$.
    \end{innerclaim}
    \begin{proof}
        Since $\abs{S} \ge \sum_i \lambda_i$,
        for each $i \in [k]$ we can pick a set $S_i$ consisting of \(\lambda_i\) distinct sets \(\{x, m+1-x\}\) from \(S\),  so that the sets $S_i$ are pairwise disjoint. The elements in $S_i$ sum to $a_i$, for $i \in [k]$.
    \end{proof}
     
    For \(n \in \nats\) let
    \begin{equation*}
		T(n) = \left\{ (x,y) : 1 \le x < y \le m,\,\, x+y \equiv n \!\!\!\pmod{m+1} \text{ and } x+y \leq n \right\}.
    \end{equation*}
    \begin{innerclaim}\label{claim:Tnbound}
        $|T(n)| \ge \min\{n/2, m/2\} - 1$.
    \end{innerclaim}
    \begin{proof}
		Consider first the case $n \ge m+1$. For $x \in [m]$, take $y_x$ to be the smallest non-negative number such that $x + y_x \equiv n \pmod{m+1}$. Then $y_x \in [0,m]$, and, since $n - x > 0$, we have $y_x \le n - x$, showing $x + y_x \le n$. Thus the number of ordered pairs $(x,y)$ satisfying $x \in [m]$, $y \in [0,m]$, $x+y \equiv n\pmod{m+1}$, and $x + y \le n$ is at least $m$. Note that at most one such pair satisfies $x = y$ (using that $m$ is even and so $m+1$ is odd), and at most one pair has $y = 0$. Hence there are at least $m/2 - 1$ pairs $(x,y)$ with $x, y \in [m]$, $x + y \equiv n \pmod{m+1}$ and $x < y$. This shows $|T(n)| \ge m/2-1$.
        
        If $n \le m$, for each $x \in \big[\floor{(n-1)/2}\big]$ taking $y_x := n - x$ shows that $|T(n)| \ge n/2 - 1$.
    \end{proof}
     
    \begin{innerclaim} \label{claim:ASD-forest:all-large}
        Let 
        \(a_1,\hdots, a_k\) be a sequence of positive integers, satisfying \(a_i \ge 3(m+1)\) for $i \in [k-1]$, $a_k \ge m+1$, and $\sum_i a_i = \ell(m+1)$ for an integer $\ell$.
        Let 
        \(S \subseteq P_m\) satisfy \(\abs{S}\ge m/4 +2k\).
        Then there is a sequence $b_1, \ldots, b_k$ such that $a_i \ge b_i$ and $a_i \equiv b_i \pmod{m+1}$, for $i \in [k]$, which is separated by $S$.
    \end{innerclaim}

    \begin{proof}
        We prove the statement by induction on $k$.
		For the base case $k = 1$, notice that $a_1 \equiv 0 \pmod{m+1}$ and thus we can take $b_1 = m+1$ (and use any $S'\subseteq S$ of size $1$, which separates $b_1$ because $S\subseteq P_m$).

        For the induction step, assume that $k \ge 2$ and that the claim holds up to \(k-1\).
        Since $a_k \ge m+1$, \Cref{claim:Tnbound} tells us that $|T(a_k)| \ge m/2 - 1$. 
        Because every $x \in [m]$ appears in at most one pair in $T(a_k)$, and $\bigcup S\subseteq \bigcup P_m=[m]$ the number of pairs in $T(a_k)$ containing an element not in $\bigcup S$ is at most $m - |\bigcup S| = m - 2|S| \le m/2 - 4k < m/2-1$, showing that there is a pair $(x,y) \in T(a_k)$ with $x,y \in \bigcup S$.
        
        Define $b_k := x + y$ and let 
        \begin{equation*}
            b_{k-1}'' := \left\{
                \begin{array}{ll}
                    (m+1 - x) + (m+1 - y) & \text{if $x+y \neq m+1$}, \\
                    0 & \text{otherwise}.
                \end{array}
                \right.
        \end{equation*}
        Set $S' := S \setminus \left\{\{x, m+1-x\}, \{y, m+1-y\}\right\}$, and let $a_i' := a_i$ for $i \in [k-2]$ and $a_{k-1}' := a_{k-1} - b_{k-1}''$.
        Now apply the induction hypothesis to the sequence $a_1', \ldots, a_{k-1}'$. To see that it is applicable, notice that $a_i \ge 3(m+1)$ for $i \in [k-2]$ and $a_{k-1}' \ge a_{k-1} - 2(m+1) \ge m+1$. 
        Moreover, we have 
        \(
        \sum_{i=1}^{k'-1} a_i ' = \ell (m+1) - a_k - b\dprm_{k-1},
        \)
        which is divisible by \(m+1\).
        Finally $|S'| = \abs{S}-2 \ge m/4 + 2(k-1)$. 
        Hence, by induction, there is a sequence $b_1', \ldots, b_{k-1}'$ which is separated by $S'$ and which satisfies $b_i' \le a_i'$ and $a_i' \equiv b_i' \pmod{m+1}$. Set $b_i := b_i'$ for $i \in [k-2]$, $b_{k-1} := b_{k-1}' + b_{k-1}''$ and recall that $b_k=x+y$.
        Then the sequence \(b_1,\hdots, b_k\) is separated by \(S\), which proves the induction step.
     \end{proof}
     
    \begin{innerclaim} \label{claim:ASD-forest:combine}
        Let $a_1, \ldots, a_k$ be a sequence of positive integers with
        \(a_i \ge 3(m+1)\) and
        \(\sum_i a_i = \ell (m+1)\), for some integer \(\ell\).
        Let \(S\subseteq P_m\) with 
        \(\abs{S} \geq \max \{ \ell,\, m/4 +2k\}\).
        Then \(S\) separates \(a_1,\hdots,a_k\).
    \end{innerclaim}
    \begin{proof}
        Let $b_1, \ldots, b_k$ be a sequence as in \Cref{claim:ASD-forest:all-large}, and let $S' \subseteq S$ be a set that separates this sequence perfectly. Since pairs in $S$ add up to $m+1$, this tells us that $|S'|=\frac{1}{m+1}\sum_{x \in S'} x=\frac{1}{m+1}\sum_ib_i$. 
        Using that $a_i - b_i$ is divisible by $m+1$, we can write $\sum_i(a_i - b_i) = \ell' (m+1)$ for an integer $\ell'$. 
        Since $\abs{S} \ge \ell$ we have 
        $|S \setminus S'|=|S|-|S'|=|S|- \frac{1}{m+1}\sum_ib_i =|S|+\ell'-\frac1{m+1}\sum_ia_i=|S|+\ell'-\ell\ge \ell'$, and thus Claim~\ref{claim:ASD-forest:all-divisible} shows that $S \setminus S'$ separates the sequence $a_1 - b_1, \ldots, a_k - b_k$.
        Hence \(S\) separates \(a_1,\hdots, a_k\), as desired. Indeed, since $S'$ separates $b_1, \ldots, b_k$, we have disjoint subsets $I_i\subseteq \bigcup S'$ with $\sum I_i=b_i$. Since $S\setminus S'$ separates $a_1-b_1, \ldots, a_k-b_k$, we have disjoint subsets $J_i\subseteq \bigcup S\setminus S'$ with $\sum J_i=a_i-b_i$. Now the sets $I_i\cup J_i\subseteq \bigcup S$ satisfy the definition of $S$ separating $a_1, \ldots, a_k$.
    \end{proof}
     
     \begin{innerclaim} \label{claim:ASD-forest:not-too-large}
        Let \(a_1 \le \hdots \le a_k\) be a sequence such that
		\(a_j \ge \min \{16 j, 3(m+1)\}\) for every $j \in \{i, \ldots, k\}$, and let \(S\subseteq P_m\).
        Let \(i\in[k]\) and \(\ell\) be an integer such that
        \(\sum_{j=i}^k a_j = \ell (m+1)\).
        Assume that
        \(\abs{S}\geq \ell\), and
        \(\ell \ge \max \{ m/2-4i +1, m/4+4(k-i), 5(k-i+1)\}\).
        Then \(S\) separates \(a_i,\hdots, a_k\).
    \end{innerclaim}
    \begin{proof}
        We prove the claim by induction.
        For the base case \(i=k\) we have
        \(a_k = \ell(m+1),\)
        and then any \(S\subseteq P_m\) with
        \(\abs{S}\ge \ell\) separates \(a_k\).
        
        For \(i<k\), if \(a_i \ge 3(m+1)\) the claim follows from Claim~\ref{claim:ASD-forest:combine} (using the assumption $|S| \ge m/4 + 4(k-i)$, since the length of the sequence is \(k-i\)), so we may assume otherwise. Thus $|T(a_i)| \ge \min\{a_i/2, m/2\} - 1 \ge 8i - 1$ (using \Cref{claim:Tnbound} for the first inequality, and $a_i\geq 16i$ and $i\leq k\leq m/16$ for the second inequality). Notice that every $x \in [m]$ is in at most one pair in $T(a_i)$. Hence, the number of pairs $(x,y) \in T(a_i)$ containing an element from $[m] \setminus \bigcup S$ is at most $m - |\bigcup S| = m - 2|S| \le m - 2\ell \le 8i-2$. It follows that there is a pair $(x,y) \in T(a_i)$ with $x,y \in \bigcup S$.
        
        Define $a_i' := a_i - x - y$. If $x+y \neq m+1$, define $a_k' := a_k - (m+1 - x) - (m+1 - y)$ (and otherwise set $a_k' := a_k$).
        Notice that
        \( a_k \geq \frac{\ell (m+1)}{k-i+1} \ge 5(m+1) \),
        so $a_k' \ge 3(m+1)$.
        
        Moreover, since 
        \(a_i' \in \{0,m+1, 2(m+1) \}\),
		we can separate $a_i'$ 
		by using (at most) two pairs from \(S \setminus \left\{\{x, m+1-x \}, \{y, m+1-y \}\right\} \).
		Let \(S\prm\) be the remainder of \(S \setminus \{\{x, m+1-x\}, \{y, m+1-y\}\}\).
        
		Let $b_{i+1}, \ldots, b_k$ be the non-decreasing sequence obtained by permuting the elements $a_{i+1}, \ldots, a_{k-1}, a_k'$. Then 
        $b_j \ge \min\{16j, 3(m+1)\}$ for all $j \in [i+1, k]$ (let $t$ be such that $b_t=a_k'$. For $j\geq t$ we have $b_j\geq b_t=a_k'\geq 3(m+1)$. For $j<t$ we have $b_j=a_j\geq \min(16j, 3(m+1))$).
        We will now show that \(S\prm\) separates
        $b_{i+1}, \ldots, b_k$, implying that the original sequence $a_i, \ldots, a_k$ is separated by \(S\).
        
        Observe by the definition of \(S\prm\) that 
        $\sum_{i+1 \le j \le k}b_j = \ell'(m+1)$ with $\ell'$ an integer such that
        \(|S'| \ge \ell\prm \ge \ell-4\).
        We thus have
        \begin{align*}
            \ell' \ge \left\{
            \begin{array}{l}
                 m/2 - 4i + 1 - 4 = m/2 - 4(i+1) + 1 \\
                 m/4 + 4(k - i) - 4 = m/4 + 4(k-(i+1))\\
                 5(k - i + 1) - 4 \ge 5(k - i).
            \end{array}
            \right.
        \end{align*}
        Hence the conditions of the induction hypothesis hold and, by induction, \(S\prm\) separates $b_{i+1}, \ldots, b_k$, as required.
    \end{proof}
    
    We now prove the lemma by verifying the conditions of Claim~\ref{claim:ASD-forest:not-too-large} for \(i=1\), \(\ell=m/2\) and \(S=P_m\). 
    Recalling that $k \le m/16$, we have $\ell = m/2 \ge \max\{m/2-4\cdot 1+1, m/4 + 4(k-1), 5k\}$, as required for the claim.
    Hence by Claim~\ref{claim:ASD-forest:not-too-large} the theorem follows.
    \end{proof}
   
\subsection{Proof of main result} \label{subsec:reduction}
    
	An \emph{ascending subgraph decomposition} of a graph $G$ with $e$ edges, where $\binom{m}{2} < e \le \binom{m+1}{2}$, is a decomposition of $G$ into graphs $H_1, \ldots, H_m$, such that $H_i$ is isomorphic to a subgraph of $H_{i+1}$ and $e(H_i) \le e(H_{i+1}) \le e(H_i) + 1$.
	Specifically, writing $t = e - \binom{m}{2}$ (so $1 \le t \le m$), we have 
	\begin{equation*}
		e(H_i) = \left\{
		\begin{array}{ll}
			i & i \le t \\
			i-1 & i > t.
		\end{array}
		\right.
	\end{equation*}

	In the next section we will prove the following lemma, showing that graphs with roughly $m^2/2$ edges and maximum degree $O(m)$ have an ASD.

	\begin{restatable}{lemma}{lemmaMaxDegLinear} \label{lem:max-deg-linear}
		Let $c=10^6$ and \(m\) sufficiently large.
		Suppose that $G$ is a graph satisfying $e(G) \in (\binom{m}{2}, \binom{m+1}{2}]$ and $\Delta(G) \le cm$. Then $G$ has an ascending subgraph decomposition.
	\end{restatable}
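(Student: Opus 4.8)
The overall strategy is to almost\nobreakdash-decompose \(G\) into a remainder of small maximum degree together with \(\Theta(m)\) pairwise isomorphic ``nice'' forests --- forests whose components are stars and complete bipartite graphs, each carrying a large \emph{isolated matching} (a matching whose edges meet no other edge of the forest) --- and then to turn this into a genuine ASD by nesting the forests into an increasing chain \(H_1\subseteq H_2\subseteq\cdots\) and afterwards shaving each \(H_i\) down, along its isolated matching, to exactly the number of edges its position demands; the shaved\nobreakdash-off edges, together with the remainder, form a low\nobreakdash-degree graph that is decomposed separately. The hypothesis \(\Delta(G)=O(m)\) enters precisely here: since \(e(G)=\Theta(m^2)\), a degree bound of \(O(m)\) forces \(G\), and the forests extracted from it, to contain matchings of size \(\Omega(m)\), whereas without it \(G\) could be a star and admit no large matching at all.

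\emph{Building the approximate isomorphic decomposition.} Call a vertex \emph{big} if its degree exceeds a threshold slightly below \(cm\). The edges incident to a non\nobreakdash-big vertex split according to whether they have one or two small endpoints. Those with exactly one small endpoint can be peeled off into \(\Theta(m)\) pairwise isomorphic star forests centred at big vertices, leaving \(o(m^2)\) edges (\Cref{lem:star-forests}), because each small vertex sees only \(O(m)\) of them. Those with two small endpoints span a subgraph of maximum degree \(O(m)\), which Vizing's theorem breaks into \(O(m)\) matchings, rebalanced by \Cref{lem:balancing-matching} into \(\Theta(m)\) near\nobreakdash-equal ones; combining the two families via \Cref{lem:large-matching-forest} (packaged as \Cref{lem:approx-star-forest-decomposition}) covers all but \(o(m^2)\) of the edges at small vertices by \(\Theta(m)\) isomorphic forests. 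The big vertices span a subgraph on only \(O(m)\) vertices, which \Cref{lem:Ktt-decomposition} almost\nobreakdash-decomposes into \(\Theta(m)\) copies of a fixed complete bipartite graph \(K_{t,t}\), up to \(o(m^2)\) edges. Merging everything gives \Cref{lem:approx-isomorphic-decomposition}: an almost\nobreakdash-decomposition of \(G\) into \(\Theta(m)\) pairwise isomorphic nice forests, each containing a large isolated matching.

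\emph{From isomorphic pieces to an ASD.} Next (\Cref{lem:approx-isomorphic-decomposition-stronger}) we reorganise these forests into graphs \(H_1,\hdots,H_{m'}\) of strictly increasing size with \(H_i\) isomorphic to a subgraph of \(H_{i+1}\) and each \(H_i\) still carrying a large isolated matching: the small \(H_i\) are simply sub\nobreakdash-matchings of the forests' isolated matchings, and as \(i\) grows \(H_i\) absorbs more and more whole forests, nested so that the chain of subgraph\nobreakdash-isomorphisms stays coherent; one arranges this so that the unused edges --- the \(o(m^2)\) leftovers plus the slack inherent in the nesting --- form a graph \(R\) with \(\Delta(R)=o(m)\). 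Finally (\Cref{lem:approx-ascending-decomposition}) we correct the sizes: for each \(i\), pick a uniformly random sub\nobreakdash-matching \(M_i\) of the isolated matching of \(H_i\) so that \(H_i\setminus M_i\) has exactly the size \(t_i\in\{i-1,i\}\) required of the \((m-m'+i)\)-th graph of the ASD; removing an isolated matching preserves the subgraph relation, so \(H_i\setminus M_i\) is still isomorphic to a subgraph of \(H_{i+1}\setminus M_{i+1}\). Put \(F=\bigcup_i M_i\). A Chernoff bound over the independent choices shows \(\Delta(F)=o(m)\), hence \(\Delta(F\cup R)=o(m)\), so Fu's theorem (\Cref{lem:asd-bounded-degree}) yields an ASD of \(F\cup R\) into matchings \(N_1,\hdots,N_{m-m'}\) whose sizes match the bottom \(m-m'\) positions by a global edge count, and \(N_{m-m'}\) embeds into the matching \(H_1\setminus M_1\). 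Concatenating \(N_1,\hdots,N_{m-m'},H_1\setminus M_1,\hdots,H_{m'}\setminus M_{m'}\) gives the required ASD of \(G\).

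The principal obstacle is \Cref{lem:approx-isomorphic-decomposition-stronger}: assembling the isomorphic forests into a truly increasing chain while simultaneously keeping the subgraph\nobreakdash-isomorphisms mutually compatible, preserving in every \(H_i\) an isolated matching large enough to absorb the later size correction, and holding \(\Delta(R)\) down to \(o(m)\). By comparison Steps~1 and~2 are careful but standard (Vizing's theorem and greedy peeling), and Step~4 is a Chernoff estimate plus an invocation of Fu's result on ascending decompositions into matchings.
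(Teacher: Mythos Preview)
Your overall architecture matches the paper's, and Steps~1 (isomorphic star/\(K_{t,t}\)-forests), 3 (random isolated-matching removal plus Chernoff) and~4 (Fu's matching ASD on the low-degree leftover) are essentially what the paper does. Two things are off. First, you have swapped the roles of \Cref{lem:approx-isomorphic-decomposition-stronger} and \Cref{lem:approx-ascending-decomposition}: in the paper, \Cref{lem:approx-isomorphic-decomposition-stronger} still outputs \emph{isomorphic} pieces (now \(2\)-divisible, each with a large isolated matching) together with an ascending sequence of stars extracted from the leftover so that the remainder has small maximum degree; the genuine ascending sequence is only built in \Cref{lem:approx-ascending-decomposition}, and the Chernoff step lives in the proof of \Cref{lem:max-deg-linear} itself.

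Second, and this is the real gap, your mechanism for turning \(k\approx m/2\) isomorphic graphs of size \(\approx m\) into an ascending chain --- ``small \(H_i\) are sub-matchings, larger \(H_i\) absorb more and more whole forests'' --- does not work: the pieces must be pairwise edge-disjoint (it is a decomposition), so you cannot nest them, and ``absorbing whole forests'' forces jumps of size \(\approx m\). The paper's trick is different and is the heart of \Cref{lem:approx-ascending-decomposition}: fix a common ordering \(e(1),\dots,e(2h)\) of the edges of the isomorphism class \(H\) so that a long prefix is an isolated matching and the first half is isomorphic to the second half (using \(2\)-divisibility); then cut the \(i\)-th copy \(H_i\) into a prefix \(A_i=\{e_i(1),\dots,e_i(a+b+i)\}\) and the complementary suffix \(B_i\). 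Since consecutive prefixes differ by one edge, the sequence \(A_1,\dots,A_k,B_k,\dots,B_1\) is ascending, uses each \(H_i\) exactly once, and the small \(A_i\) are matchings. The ascending stars from \Cref{lem:approx-isomorphic-decomposition-stronger} then supply the tail \(C_{k+1},\dots,C_{k+\ell}\). This ``split each isomorphic piece into two complementary parts along a shared edge order'' is precisely the idea your sketch is missing and is what you should supply in place of the nesting description.
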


	We use this lemma to prove our main result, Theorem~\ref{thm:main}.
    
    \begin{proof}[Proof of Theorem~\ref{thm:main} using Lemma~\ref{lem:max-deg-linear}]
		Let $c = 10^6$ (as in \Cref{lem:max-deg-linear}), let $m_0$ be such that Lemma~\ref{lem:max-deg-linear} holds for $m \ge m_0$,
        and let \(m\ge m_0^2\).
  
		Let $G_0=G$ and let $v_1, \ldots, v_k \in V(G)$ be such that
        $v_{i+1}$ has maximum degree in $G_i := G - \{v_1, \ldots, v_i\}$ and satisfies $d_{i+1} := d_{G_i}(v_{i+1}) > c \sqrt{e(G_i)}$, for $i \in [0, k-1]$. Suppose that this is a maximal sequence with this property i.e.\ that $\Delta(G_k) \le c\sqrt{e(G_k)}$.
		We may assume \(k\ge 1\) since otherwise we are done by Lemma~\ref{lem:max-deg-linear}.
        Note that $d_1\geq \dots \geq d_k$,
        \(d_1 \ge c \sqrt{\binom{m+1}{2}} \ge m\),
        and \(e(G_{k-1})\ge 1\) since otherwise \(\Delta(G_{k-1})\le \sqrt{e(G_{k-1})}\) and we would get a sequence of length \(k-1\), contradicting maximality.

        We claim that the sequence 
        \(d_1, \hdots, d_k\) satisfies
		\(d_{i} \ge c (k-i+1)\).
        This follows by induction on \(i\).
        The initial case \(i=k\) holds because \(d_k > c \cdot \sqrt{e(G_{k-1})} \ge c \).
        For \(i \le k-1\), using the induction hypothesis,      
        \begin{eqnarray} \label{eq1}		
			d_i > c \sqrt{e(G_{i-1})}
			&\ge& c \sqrt{\sum_{j=i+1}^k d_j} \\
			&\geq& c\sqrt{\sum_{j=i+1}^kc(k-j+1)} 
			= c \sqrt{ c \binom{k-i+1}{2}}
			\ge c(k-i +1). \nonumber
        \end{eqnarray}
		
		For \(i \in [k]\), let $S_i$ be the star rooted at \(v_i\) with edges in \(G_{i-1}\), so that \(e(S_i) = d_i\).
  
		If \(e(G_k)=0\), then \(E(G) = \bigcup_i E(S_i)\) and the theorem follows from Theorem~\ref{thm:main-stars} to the sequence $d_k, \ldots, d_1$. Indeed, notice that the $i^{\text{th}}$ element in this sequence, namely $d_{k+1-i}$, satisfies $d_{k+1-i} \ge ci \ge 1600i$, so the theorem is applicable.
        Suppose \(e(G_k)\ge 1 \).
        We will build an ASD of \(G\) in the following way: 
        given an edge decomposition of \(G_k\) and a graph \(H_i\) in the decomposition, we will find a large substar 
        \(\hat{S}_i\) among \(S_1,\hdots, S_k\) that has several edges not used yet which are vertex disjoint from \(H_i\) and pick $G_i'$ of appropriate size so that \(H_i \subseteq G'_i \subseteq H_i \cup \hat{S}_i\).
        Then all the remaining edges will be substars of \(S_1,\hdots, S_k\), and they can be decomposed using Theorem~\ref{thm:main-stars}.
        
        Let \(1 \le t\le m-1\) be the integer such that
		\(
		\binom{t}{2}<e(G_k) \le \binom{t+1}{2},
		\)
        so \(\Delta(G_k) \le c\sqrt{e(G_k)}\leq c\sqrt{\binom{t+1}2}\leq c t\).
		Let $H_{r+1}, \ldots, H_{m}$ be an edge decomposition of $G_k$ as follows. If $t \ge m_0$, then by Lemma~\ref{lem:max-deg-linear} $G_k$ has an ASD consisting of $t$ graphs, in which case set $r := m-t$ and let $H_{r+1}, \ldots, H_m$ be an ASD of $G_k$. Otherwise, set $r := m-e(G_k)$, and let $H_{r+1}, \ldots, H_{m}$ be a decomposition of $G_k$ into individual edges.  Note that, by definition, each $H_j$ has at most $j-r$ edges. 
        Also observe that in both cases \(e(G\setminus G_k) \le 2mr\).
        Indeed, 
        if  $r = m - t$, we have
		\[
			e(G\setminus G_k) \le \binom{m+1}{2} - \binom{t}{2} = \binom{r+1}{2} + (r+1)(m-r) \le (r+1)m \le 2rm.
		\]
		Otherwise, \(r \ge m- \binom{m_0}{2} \ge m/2\),
        using \(m\ge m_0^2\),
        and so \(e(G\setminus G_k)\le 2mr\). 
		\begin{innerclaim}
			Let \(h=\frac{\sqrt{c}}{4\sqrt{2}}\) (so $100 \le h \le 1000$, recalling that $c = 10^6$).
			Then
			\( k \le \frac{r}{h}\).
		\end{innerclaim}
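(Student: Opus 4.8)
The plan is to combine three estimates for \(\sum_{i=1}^{k} d_i\). Since deleting \(v_i\) from \(G_{i-1}\) removes exactly \(d_i\) edges, \(\sum_{i=1}^{k} d_i = e(G) - e(G_k) = e(G\setminus G_k)\), which was shown above to be at most \(2mr\). The already established bound \(d_i \ge c(k-i+1)\) gives \(\sum_{i=1}^{k} d_i \ge c\binom{k+1}{2} \ge \tfrac{c}{2}k^2\). Finally, \(G_k \subseteq G_{i-1}\) for \(i \le k\), so \(e(G_{i-1}) \ge e(G_k)\) and hence \(d_i > c\sqrt{e(G_{i-1})} \ge c\sqrt{e(G_k)}\) for every \(i \in [k]\), whence \(\sum_{i=1}^{k} d_i > ck\sqrt{e(G_k)}\).

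I would then split on the size of \(r\). If \(2r \ge m\), the first two estimates give \(\tfrac{c}{2}k^2 \le 2mr \le 4r^2\), so \(k \le \tfrac{2\sqrt2}{\sqrt c}\,r = \tfrac{r}{2h} \le \tfrac{r}{h}\). If \(2r < m\), I first note we must be in the branch with \(r = m-t\) and \(t \ge m_0\): otherwise \(r = m - e(G_k)\) with \(e(G_k) \le \binom{t+1}{2} \le \binom{m_0}{2}\), forcing \(r > m - \binom{m_0}{2} > m/2\) (using \(m \ge m_0^2\)), which contradicts \(2r < m\). Thus \(t = m-r > m/2\), so \(e(G_k) > \binom{t}{2} \ge \binom{\lceil m/2\rceil}{2} \ge m^2/16\) and \(\sqrt{e(G_k)} > m/4\); feeding this into the third estimate, \(\tfrac{ckm}{4} < \sum_i d_i \le 2mr\), so \(k < \tfrac{8r}{c} \le \tfrac{r}{h}\), the last inequality because \(h \le 1000 < c/8\).

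No step here is a real obstacle; the whole claim is a few lines of arithmetic. The one place needing a little care is the case \(2r < m\) (equivalently, \(t\) close to \(m\)), where the crude bound \(\sum_i d_i \ge \tfrac{c}{2}k^2\) is too weak on its own and one instead exploits that \(d_i > c\sqrt{e(G_k)}\) holds for \emph{every} index \(i\) — together with the quadratic-in-\(m\) lower bound on \(e(G_k)\) forced by \(t > m/2\). One also has to remember to rule out the ``\(t<m_0\)'' branch in the definition of \(r\) before invoking \(r = m-t\).
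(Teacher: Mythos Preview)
Your proof is correct and a bit cleaner than the paper's. Both arguments split on whether $r$ is at least about $m/2$, and in the ``large $r$'' regime both use $\sum_i d_i \ge \frac{c}{2}k^2$ against $e(G\setminus G_k) \le 2mr \le 4r^2$. The difference is in the ``small $r$'' (equivalently $t>m/2$) regime: the paper introduces an auxiliary index $k'$ (the largest $i$ with $d_i \ge 4mh$), bounds $k'$ and $k-k'$ separately, and adds them; you instead observe that $d_i > c\sqrt{e(G_k)}$ holds uniformly for \emph{all} $i$, so $\sum_i d_i > ck\sqrt{e(G_k)} \ge ckm/4$, which gives $k < 8r/c \le r/h$ directly. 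Your route avoids the auxiliary parameter entirely at the cost of a slacker (but still sufficient) constant in that case; the paper's splitting into $k'$ and $k-k'$ gives the tighter $r/(2h)$ in each piece but is slightly longer. Your handling of the $t<m_0$ branch is also fine and matches the paper's reasoning.
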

		\begin{proof}
			Let \(k'\) be maximal such that 
			\(d_{k'} \ge 4mh\).
			Then from 
			\(e(G\setminus G_k) \le 2mr\) 
			and the lower bound
			\(e(G\setminus G_k) \ge k' d_{k'} \),
			we have $k' \le \frac{r}{2h}$.
			If $t \ge \frac{m}{2}$ then $d_k \ge c \sqrt{\binom{m/2}{2}} \ge \frac{cm}{4} \ge 4mh$, so $k = k' \le \frac{r}{2h}$.

			So, we may assume that 
			$t \le \frac{m}{2}$. Then $r \ge \frac{m}{2}$, using that $r \ge m/2$ if $r \neq m-t$, and so
			\(e(G\setminus G_k) \le 2mr \le 4r^2\).
			On the other hand, by \eqref{eq1}, we have
			\(
			e(G\setminus G_k) \ge \sum_{i=k'}^k d_i \ge \sum_{i=k'}^k c\, (k-i+1) \ge \frac{c}{2}(k-k')^2 \).
			Thus
			$k - k'
			\le \sqrt{\frac{8}{c}r^2} = \frac{r}{2h}$.
		\end{proof}
			
        We define stars \(\hS_{r+1}, \hdots, \hS_m\) contained in \(S_1\cup \cdots \cup S_k\) inductively as follows.
        Let 
        \[
			T_i \in \left\{S_1\setminus 
			\left( \hS_{i+1}\cup \cdots\cup \hS_m \right),
			\hdots, S_k\setminus 
			\left( \hS_{i+1}\cup \cdots\cup \hS_m \right) \right\}
        \]
        be a star of maximum size, and let \(\hS_i\) be a substar of \(T_i \setminus V(H_i)\) of size \(i-e(H_i)\) (this is possible by the following claim).
		\begin{innerclaim}
			\(
			e(T_i \setminus V(H_i)) \ge i-e(H_i) + 20(r+1).
			\)
		\end{innerclaim}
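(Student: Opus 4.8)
The plan is a counting argument, combining an averaging estimate over the stars $S_1,\ldots,S_k$ with a bound on how many edges are lost when $V(H_i)$ is deleted. I first determine exactly how many edges of $S_1\cup\cdots\cup S_k$ survive the removal of $\hS_{i+1}\cup\cdots\cup\hS_m$. By construction the stars $\hS_{i+1},\ldots,\hS_m$ are pairwise edge-disjoint subgraphs of $S_1\cup\cdots\cup S_k$ with $e(\hS_j)=j-e(H_j)$, so the number of surviving edges is $e(G\setminus G_k)-\sum_{j=i+1}^m\big(j-e(H_j)\big)$, which is precisely $\sum_{l=1}^k e\big(S_l\setminus(\hS_{i+1}\cup\cdots\cup\hS_m)\big)$. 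Substituting $e(G\setminus G_k)=\binom{m+1}{2}-e(G_k)$, $\sum_{j=i+1}^m j=\binom{m+1}{2}-\binom{i+1}{2}$ and $\sum_{j=r+1}^m e(H_j)=e(G_k)$, this telescopes to $\binom{i+1}{2}-\sum_{j=r+1}^i e(H_j)$. Since each $H_j$ has at most $j-r$ edges, $\sum_{j=r+1}^i e(H_j)\le\binom{i-r+1}{2}$, and the elementary identity $\binom{i+1}{2}-\binom{i-r+1}{2}=\tfrac12 r(2i-r+1)$ shows that at least $\tfrac12 r(2i-r+1)$ edges survive.

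Next, since $T_i$ is a largest of the $k$ surviving stars, $e(T_i)\ge\tfrac{r(2i-r+1)}{2k}$, and the previous claim (with $h\ge100$) gives $k\le r/h$, so $e(T_i)\ge\tfrac{h(2i-r+1)}{2}\ge 50(2i-r+1)=100i-50r+50$. Deleting $V(H_i)$ from the star $T_i$ destroys at most $|V(H_i)|\le 2e(H_i)$ edges, because the root of $T_i$ is one of the vertices $v_1,\ldots,v_k$ and therefore does not lie in $V(G_k)\supseteq V(H_i)$; hence $e(T_i\setminus V(H_i))\ge 100i-50r+50-2e(H_i)$.

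It then remains to check that $100i-50r+50-2e(H_i)\ge i-e(H_i)+20(r+1)$, i.e.\ that $99i-70r+30\ge e(H_i)$; since $e(H_i)\le i$ and $i\ge r+1$, the left-hand side is at least $98i-70r+30\ge 28r+128>0$, so the bound holds with room to spare. There is no genuine obstacle in this claim: it is an averaging estimate followed by arithmetic, and the constants (the gap between $100i$ and $i+20(r+1)$, together with $i\ge r+1$) leave ample slack. The only points that deserve care, rather than presenting a real difficulty, are the bookkeeping that the $\hS_j$ are edge-disjoint and contained in $\bigcup_l S_l$, and the observation that the root of $T_i$ avoids $V(H_i)$, so that deleting $V(H_i)$ costs only $O(e(H_i))$ edges.
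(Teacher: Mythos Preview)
Your proof is correct and follows essentially the same approach as the paper: both compute the total number of surviving edges as $\binom{i+1}{2}-\sum_{j=r+1}^i e(H_j)\ge \tfrac12 r(2i-r+1)$, average over the $k\le r/h$ stars to lower-bound $e(T_i)$, observe that the root of $T_i$ lies outside $V(H_i)$ so deleting $V(H_i)$ costs at most $2e(H_i)$ edges, and finish with arithmetic using $h\ge100$ and $i\ge r+1$. The only cosmetic differences are that you substitute $h\ge100$ immediately while the paper carries $h$ symbolically, and you use the weaker bound $e(H_i)\le i$ where the paper uses $e(H_i)\le i-r$; both yield the result with slack.
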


		\begin{proof}

			Let $i \in \{r, \ldots, m\}$, and assume that the stars $\hS_{i+1}, \ldots, \hS_m$ were defined as above.
			Then
			\begin{align*}
				e(S_1 \cup \ldots \cup S_k) - e(\hS_{i+1} \cup \ldots \cup \hS_{m})
				&= \binom{m+1}{2} - e(G_k) - \sum_{j=i+1}^m (j-e(H_j)) \\
				&= \sum_{j = 1}^m j - \sum_{j = r+1}^m e(H_j) - \sum_{j = i+1}^m j + \sum_{j = i+1}^m e(H_j) \\
				&= \binom{i+1}{2} -\sum_{j=r+1}^i e(H_j) \\
				&\ge \binom{i+1}{2} -\sum_{j=r+1}^i (j-r)  \\
				& = r\left(i - \frac{r}{2} + \frac{1}{2}\right) 
				\ge r\left(i - \frac{r}{2}\right),
			\end{align*}
			where for the first inequality we used \(e(H_j) \le j-r\).
			Then
			\[
				e(T_i) \ge 
				\frac{1}{k} \cdot r\left(i - \frac{r}{2}\right)
				\ge
				h\left(i - \frac{r}{2}\right),
			\]
			using the bound \(k\le \frac{r}{h}\).
			Hence
   
			\begin{align*}
				e(T_i \setminus V(H_i)) - \big(i - e(H_i) + 20(r+1)\big)
				& \ge e(T_i) - e(H_i) - i - 20(r+1) \\
				& \ge h\left(i - \frac{r}{2}\right) - (i-r) - i - 20(r+1) \\
				& \ge (h-2)i - \frac{hr}{2} - 40r \\
				& \ge (h-2)r - \frac{hr}{2} - 40r
				= r\left(\frac{h}{2} -42\right) \ge 0.
			\end{align*}
			Here we used that the centre of $T_i$ is not in $H_i$ and $|V(H_i)| \leq 2e(H_i)$ for the first inequality, that $e(H_i) \le i-r$ for the second inequality, and that $h \ge 100$ for the last inequality.
			This proves $e(T_i \setminus V(H_i)) \ge i - e(H_i) + 20(r+1)$, as required.
		\end{proof}
		
		Define $\hG_i := \hS_i \cup H_i$ for $i \in [r+1, m]$, so \(e(\hG_i) = i\).
		Observe that the graphs 
		\(\hat{G}_{r+1}, \hdots, \hat{G}_{m} \) are pairwise edge-disjoint and cover all but \(\binom{r+1}{2}\) edges of \(G\), and all uncovered edges lie in $S_1 \cup \ldots \cup S_k$. 
        For \(i\in [k]\) let \(S_i' = S_{k-i+1} \setminus \bigcup _{j=r+1}^m \hat{G}_j\). Note that if some edges of the star $S_j$ were used by \(\hS_{r+1},\hdots, \hS_m\), then in the end of the process it still has at least $20(r+1)$ edges.
        Otherwise, by \eqref{eq1} the size of the star $S_{j}$ remains $d_j$ and is thus at least $c(k-j+1)$. Either way, $e(S_i') \ge \min\{20(r+1), ci\}$.
        Hence, by Theorem~\ref{thm:main-stars}, there is a decomposition of the last \(\binom{r+1}{2}\) edges into stars 
		\(\hG_1,\hdots, \hG_r\) 
		with \(e(\hG_i) = i\).
		
		We claim that $\hG_1, \ldots, \hG_m$ is an ASD of $G$. Indeed, first notice that $e(\hG_i) = i$. Next, we confirm that $\hG_i$ is isomorphic to a subgraph of $\hG_{i+1}$, for $i \in [m-1]$.
		This clearly holds when $i \in [r-1]$, because both $\hG_i$ and $\hG_{i+1}$ are stars.
		Next, notice that $e(H_1) = 1$, and thus $\hG_{r+1}$ is the disjoint union of a star of size $r$ and an edge, implying that $\hG_r$ is isomorphic to a subgraph of $\hG_{r+1}$.
		
		Finally, recall that $H_i$ is isomorphic to a subgraph of $H_{i+1}$ and $e(H_{i+1}) \in \{e(H_i), e(H_i)+1\}$ for $i \in [r+1,m-1]$. Since $\hS_j$ is a star of size $j-e(H_j)$, for $j \in [r+1, m]$, it follows that either $e(H_{i+1}) = e(H_i)$ and $e(\hS_{i+1}) = e(\hS_i) + 1$, or $e(H_{i+1}) = e(H_i) + 1$ and $e(\hS_{i+1}) = e(\hS_i)$. 
        Either way, $\hG_{i} = \hS_i \cup H_i$ is isomorphic to a subgraph of $\hS_{i+1} \cup H_{i+1} = \hG_{i+1}$, for $i \in [r+1,m-1]$, completing the proof that $\hG_1, \ldots, \hG_m$ is an ascending subgraph decomposition of $G$.
	\end{proof}

\section{Preliminary lemmas}

	In this section we prove various preliminary lemmas that will be used in the proof of Lemma~\ref{lem:max-deg-linear}, which shows that graphs of linear maximum degree have an ascending subgraph decomposition.

	At the end of the proof of \Cref{lem:max-deg-linear} we will need to use the Chernoff bound.
	\begin{theorem}[{Chernoff Bound, \cite[eq.\ (2.9) and Theorem 2.8]{jlr}}] \label{thm:Chernoff}
		Let \(X\) be the sum of \(n\) mutually independent indicator random variables.
		Then, for every \(t\ge 0\),
		\[
			\prob{X \ge \expect{X} + t} \le \e^{-\frac{2t^2}{n}}.
		\]
	\end{theorem}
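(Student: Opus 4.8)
The plan is to use the classical exponential‑moment (Chernoff) method, which for variables bounded in $[0,1]$ produces exactly the stated Hoeffding‑type constant. Write $X=\sum_{i=1}^n X_i$ with the $X_i$ mutually independent indicators, put $p_i=\expect{X_i}$, and set $Y_i:=X_i-p_i$, so that each $Y_i$ has mean zero and takes values in the interval $[-p_i,\,1-p_i]$ of length $1$. The case $t=0$ is trivial, so assume $t>0$. For any $\lambda>0$, Markov's inequality applied to the nonnegative variable $e^{\lambda(X-\expect{X})}$ gives
\[
	\prob{X \ge \expect{X}+t}\;\le\; e^{-\lambda t}\,\expect{e^{\lambda\sum_i Y_i}}\;=\;e^{-\lambda t}\prod_{i=1}^n\expect{e^{\lambda Y_i}},
\]
where the equality uses independence of the $Y_i$.

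The crux is to bound each factor $\expect{e^{\lambda Y_i}}$ via Hoeffding's lemma: if $Y$ has mean zero and $Y\in[a,b]$, then $\expect{e^{\lambda Y}}\le e^{\lambda^2(b-a)^2/8}$. I would prove this by convexity of $y\mapsto e^{\lambda y}$: for $y\in[a,b]$ one has $e^{\lambda y}\le \frac{b-y}{b-a}e^{\lambda a}+\frac{y-a}{b-a}e^{\lambda b}$, and taking expectations while using $\expect{Y}=0$ kills the linear term, leaving $\expect{e^{\lambda Y}}\le \frac{b}{b-a}e^{\lambda a}-\frac{a}{b-a}e^{\lambda b}=e^{\phi(u)}$, where $u:=\lambda(b-a)$, $p:=-a/(b-a)\in[0,1]$, and $\phi(u):=-pu+\log\!\big(1-p+pe^u\big)$. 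A direct computation gives $\phi(0)=\phi'(0)=0$ and $\phi''(u)=\frac{(1-p)\,pe^u}{(1-p+pe^u)^2}\le\frac14$ for every $u$ (the inequality is $(r+s)^2\ge 4rs$ with $r=1-p$, $s=pe^u$), so Taylor's theorem with remainder yields $\phi(u)\le u^2/8$, i.e.\ $\expect{e^{\lambda Y}}\le e^{\lambda^2(b-a)^2/8}$. Applying this with $b-a=1$ gives $\expect{e^{\lambda Y_i}}\le e^{\lambda^2/8}$ for each $i$.

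Substituting back, $\prob{X\ge\expect{X}+t}\le e^{-\lambda t+n\lambda^2/8}$ for every $\lambda>0$. The exponent is a convex quadratic in $\lambda$, minimized at $\lambda=4t/n$, where it equals $-\,2t^2/n$; this is precisely the claimed bound. The only real content is Hoeffding's lemma, so the convexity‑plus‑Taylor estimate for $\phi$ is the (mild) main obstacle, while everything else is bookkeeping; alternatively one may simply invoke the bound as stated in \cite{jlr}.
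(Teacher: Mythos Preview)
The paper does not prove this theorem at all; it is stated as a cited result from \cite{jlr} and used as a black box later in the proof of \Cref{lem:max-deg-linear}. Your proposal is a correct, self-contained proof via the standard exponential-moment method and Hoeffding's lemma, and the optimisation at $\lambda=4t/n$ indeed yields the exponent $-2t^2/n$. So there is nothing to compare against --- you have supplied a valid proof where the paper simply quotes the result.
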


	\subsection{Matching lemmas}

		In this section we prove some lemmas related to matchings. Several of these have appeared in the literature, but we include the proofs here for completeness.

		We say that two sets have \emph{almost equal size} if their sizes differ by at most one.
		The next lemma shows that a collection of edge-disjoint matchings can be rearranged so that the matchings have almost equal size.

		\begin{lemma}[cf.\ \cite{deWerra}, \cite{Mcdiarmid}] \label{lem:balancing-matching}
			Let $M_1, \ldots, M_k$ be a collection of pairwise edge-disjoint matchings in a graph $G$. Then there is a collection $M_1', \ldots, M_k'$ of pairwise edge-disjoint matchings of almost equal size such that $\bigcup M_i = \bigcup M_i'$.
		\end{lemma}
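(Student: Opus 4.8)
The plan is to repeatedly rebalance pairs of matchings via alternating paths and cycles until all the matchings have almost equal size. Concretely, suppose the matchings do not already have almost equal size; then there exist indices $i, j$ with $|M_i| \ge |M_j| + 2$. The key step is to show that we can move one edge from $M_i$ to $M_j$ (possibly while shuffling edges between the two colour classes) so as to decrease $|M_i|$ by one and increase $|M_j|$ by one, keeping both sets matchings and keeping their union unchanged. To do this, consider the graph $H = M_i \cup M_j$ (as an edge-disjoint union, so edges keep their label). Since $M_i$ and $M_j$ are each matchings, every vertex of $H$ has degree at most $2$, so $H$ is a disjoint union of paths and cycles whose edges alternate between $M_i$-edges and $M_j$-edges. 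Every cycle has equal numbers of $M_i$- and $M_j$-edges, and a path of even length also has equal numbers; the only components that can contribute to the imbalance $|M_i| - |M_j| \ge 2$ are odd-length paths, and among these, since $|M_i| > |M_j|$, there must be at least one odd path $P$ whose two end-edges both belong to $M_i$. Swapping the labels along $P$ (i.e.\ recolouring each $M_i$-edge of $P$ as an $M_j$-edge and vice versa) produces new matchings $M_i'' , M_j''$ with $|M_i''| = |M_i| - 1$ and $|M_j''| = |M_j| + 1$; since $P$ is a path, the recoloured classes are still matchings, and the multiset of edges of $H$ is unchanged, so $\bigcup$ over all $k$ matchings is preserved.

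Having established this single rebalancing move, I would finish by an extremal/potential argument: among all collections $M_1', \ldots, M_k'$ of pairwise edge-disjoint matchings with $\bigcup M_i' = \bigcup M_i$, pick one minimising $\sum_\ell |M_\ell'|^2$ (equivalently, maximising the minimum size, or one can just take any collection on which finitely many rebalancing moves terminate). If this extremal collection were not of almost equal size, the move above applied to a pair $M_i', M_j'$ with $|M_i'| \ge |M_j'| + 2$ strictly decreases $\sum_\ell |M_\ell'|^2$ (replacing $x^2 + y^2$ by $(x-1)^2 + (y+1)^2 = x^2 + y^2 - 2(x - y) + 2 < x^2 + y^2$ since $x - y \ge 2$), contradicting minimality. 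Hence the extremal collection already has almost equal sizes, and it is the desired $M_1', \ldots, M_k'$.

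The only real point requiring care — and the step I expect to be the mild obstacle — is verifying that an odd path with both ends in $M_i$ must exist whenever $|M_i| > |M_j|$: one argues that each component of $H$ has $|M_i \cap C| - |M_j \cap C| \in \{-1, 0, 1\}$, with value $+1$ exactly for odd paths with both end-edges in $M_i$, value $-1$ exactly for odd paths with both end-edges in $M_j$, and $0$ otherwise, so summing over components forces at least one $+1$-component. Everything else is routine: the recolouring move is clearly reversible and preserves the union, and the potential-function argument terminates because $\sum_\ell |M_\ell'|^2$ is a non-negative integer that strictly decreases at each step. (Alternatively one can phrase the termination directly: the move strictly decreases the difference between the largest and smallest matching sizes only weakly, so the cleanest route really is the potential function $\sum_\ell |M_\ell'|^2$.)
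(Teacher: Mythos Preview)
Your proof is correct and follows essentially the same approach as the paper: both find an odd alternating path in $M_i \cup M_j$ with end-edges in the larger matching and swap colours along it. The only minor difference is in the termination argument---you use the potential $\sum_\ell |M_\ell'|^2$, while the paper argues that each swap either decreases the maximum size-difference or decreases the number of pairs attaining it; your version is arguably cleaner.
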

		\begin{proof}
            Without loss of generality
            \(\abs{M_1} \le \hdots \le \abs{M_k}\)
            and suppose \(\abs{M_k} \ge \abs{M_1} +2\) since otherwise we are done.

            The connected components of $M_1\cup M_k$ are paths and even cycles, such that in both cases consecutive edges belong to different matchings. Notice that in even paths and cycles the number of edges belonging to each of the two matchings is the same. Hence, there is a connected component \(P\) that is an odd path whose first and last edges lie on $M_k$.
            Swap the edges of \(M_1,M_k\) on \(P\) and let \(M_1', M_k'\) be the resulting matchings.
            Then \(\abs{M_1'} = \abs{M_1}+1\), \(\abs{M_k'} = \abs{M_k} -1\) and \(M_1', M_k'\) are edge-disjoint.

            If the matchings \(M_1', M_2, \hdots, M_{k-1}, M_k'\) are not almost equal, iterating the above argument either decreases the maximum difference in size between two matching, or decreases the number of 
            pairs of matchings whose difference in size is at least 2 and maximum.
            This procedure must eventually terminate, yielding a collection of almost equal matchings decomposing $M_1 \cup \ldots \cup M_k$.
		\end{proof}

		Next we use Vizing's theorem together with Lemma \ref{lem:balancing-matching} to prove that graphs with small maximum degree have ascending subgraph decompositions into matchings; this fact was (essentially) already proved by Fu~\cite{fu1990note}.
		\begin{lemma}[cf.\ \cite{fu1990note}] \label{lem:asd-bounded-degree}
			Let $G$ be a graph with $e$ edges, where $\binom{m}{2} < e \le \binom{m+1}{2}$, and suppose that $G$ has maximum degree at most $\floor{m/2}-1$. Then $G$ has an ascending subgraph decomposition into matchings.
		\end{lemma}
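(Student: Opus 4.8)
The plan is to combine Vizing's theorem with the balancing lemma \Cref{lem:balancing-matching}, after first fixing the target sizes and understanding how they pair up. Write $t = e - \binom{m}{2}$, so $1 \le t \le m$, and set $s_i = i$ for $i \le t$ and $s_i = i-1$ for $i > t$; then $s_1 \le \dots \le s_m$, consecutive $s_i$ differ by $0$ or $1$, and $\sum_i s_i = e$. It suffices to decompose $E(G)$ into matchings $H_1,\dots,H_m$ with $e(H_i) = s_i$: for matchings the requirements ``$H_i$ is isomorphic to a subgraph of $H_{i+1}$'' and ``$e(H_i) \le e(H_{i+1}) \le e(H_i)+1$'' follow at once from $s_i \le s_{i+1} \le s_i+1$.

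First I would group $[m]$ into $k := \ceil{m/2}$ blocks, each of size at most two. If $m$ is even, take the blocks $\{j,\,m+1-j\}$ for $j \in [m/2]$. If $m$ is odd, take the blocks $\{j,\,m-j\}$ for $j \in [(m-1)/2]$ together with the singleton $\{m\}$. Using $s_i = i$ or $i-1$, the sum of a pair-block equals $m+1$ (for $m$ even) or $m$ (for $m$ odd) minus the number of its elements that exceed $t$, a quantity in $\{0,1,2\}$, while the singleton block has sum $s_m \in \{m-1,m\}$. A short case distinction on $t$ shows that among the pair-blocks the value $2$ occurs only when $t \le \floor{m/2}-1$ and the value $0$ only when $t$ is correspondingly large, and that these two ranges of $t$ are disjoint; hence over all blocks the sums take at most two consecutive integer values, and the singleton value $s_m$ is always inside this range. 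Since these sums add up to $e$ over exactly $k$ blocks, they form the unique multiset of $k$ integers with sum $e$ that are pairwise within $1$ of one another.

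Next, by Vizing's theorem $G$ has a proper edge-colouring with at most $\Delta(G)+1 \le \floor{m/2} \le k$ colours; adding empty colour classes yields $k$ edge-disjoint matchings with union $E(G)$, and \Cref{lem:balancing-matching} rearranges them into matchings $B_1,\dots,B_k$ of almost equal size with the same union. Their sizes also form the unique multiset of $k$ integers with sum $e$ that are pairwise within $1$, so they coincide with the multiset of block sums. Now fix a bijection matching each $B_i$ with a block whose sum equals $e(B_i)$: for a pair-block split $E(B_i)$, which is itself a matching, arbitrarily into two matchings of the two prescribed sizes; for the singleton block set $H_m := B_i$. This produces edge-disjoint matchings $H_1,\dots,H_m$ with $\bigcup_i E(H_i) = E(G)$ and $e(H_i) = s_i$, which by the first paragraph is an ascending subgraph decomposition.

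The crux is the grouping step: the blocks must be chosen so that their sums really are near-equal, and for odd $m$ this forces the singleton $\{m\}$ — pairing symmetrically about the middle index instead would create a block of sum roughly $m/2$, far from the pair-sums of size roughly $m$, and the two multisets would fail to match. The remaining ingredients are routine: Vizing's theorem, \Cref{lem:balancing-matching}, the fact that a multiset of prescribed cardinality and sum whose elements are pairwise within $1$ is uniquely determined, and the triviality that any partition of the edge set of a matching into two parts gives a pair of matchings.
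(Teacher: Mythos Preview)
Your argument is correct and uses the same ingredients as the paper's proof (Vizing's theorem, the balancing lemma \Cref{lem:balancing-matching}, and splitting each balanced matching into two pieces of prescribed sizes). The only difference is a minor bookkeeping choice: the paper first pulls out a single matching $M^\ast$ of size exactly $t$, so that the remaining graph has $\binom{m}{2}$ edges and balances into $\floor{m/2}$ matchings of \emph{exactly} equal size, which are then split by the explicit pairing $i\leftrightarrow m-1-i$ (even $m$) or $i\leftrightarrow m-i$ (odd $m$); you instead balance all of $G$ directly into $\ceil{m/2}$ almost-equal matchings and invoke the uniqueness of the multiset of $k$ near-equal integers with given sum to match them to your block sums. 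Your route avoids the preliminary extraction of $M^\ast$ at the cost of a short case analysis verifying that the block sums (including the singleton $\{m\}$ when $m$ is odd) really do lie in two consecutive values; the paper's route avoids that check at the cost of the extra averaging step producing $M^\ast$. Both are equally short and essentially the same proof.
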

		\begin{proof}
			Let \( t = e - \binom{m}{2}\le m\).
			By Vizing's theorem we can decompose \(E(G)\) into at most \(\floor{m/2}\) matchings. Thus, as the following calculation shows, there exists a matching \(M^\ast\) of size \(t\):
			\(
				\frac{e}{\floor{m/2}} 
				> \frac{\binom{m}{2}}{m/2}
				= m-1.
			\)
			Let $G' = G - M^*$, so that
			\(e(G') =  \binom{m}{2}\).
			We now consider two cases based on the parity of \(m\).

			If \(m\) is even, by Vizing's theorem and Lemma~\ref{lem:balancing-matching}, we can decomposes $G'$ into matchings \(M_1,\hdots, M_{m/2}\) of size \(m-1\) each.
			Let \(H_{m-1}:= M_{m/2}\).
			For \(i\in [m/2-1]\) let \(H_i\) consist of \(i\) edges from \(M_i\) and let \(H_{m-i-1} := M_i - H_i\).
			Then 
            \((H_1, \hdots, H_{t-1}, M^\ast, H_{t},\hdots, H_{m-1})\) is an ascending subgraph decomposition.
			
			If \(m\) is odd, by Vizing and Lemma~\ref{lem:balancing-matching}, there is a decomposition of $G'$ into matchings \(M_1,\hdots, M_{(m-1)/2}\) of size \(m\) each.
			For \(i\in [(m-1)/2]\) let \(H_i\) consist of \(i\) edges of \(M_i\), and let \(H_{m-i}:= M_i - H_i\).
			Then \((H_1, \hdots, H_{t-1}, M^\ast, H_t, \hdots, H_{m-1})\) is an ascending subgraph decomposition of \(G\).
		\end{proof}

		The next lemma is a generalisation of Hall's theorem, that in our  context yields a decomposition of a bipartite graph into isomorphic star forests and a graph of small maximum degree.

		\begin{lemma}
			\label{lem:star-forests}
			Let $H$ be a bipartite graph with bipartition $\{X, Y\}$ such that $d(x) < d$ for every $x \in X$. Then $H$ can be decomposed into $(\SF_1, \ldots, \SF_d, R)$, where the $\SF_i$'s are isomorphic star forests of at most 
            \(\abs{Y}\) components whose stars have size at most $\Delta(H)/d$, and $\Delta(R) < d$.
		\end{lemma}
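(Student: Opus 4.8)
The plan is to realise all the star forests with their centres on the $Y$-side, so that the statement becomes one about balanced edge colourings of an auxiliary bipartite graph, which follows from König's edge-colouring theorem (the bipartite case being a standard consequence of Hall's theorem — hence the billing as a generalisation of Hall).

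First I would pass to a divisibility-friendly subgraph. For each $y\in Y$ write $d_H(y)=q_y d+r_y$ with $0\le r_y<d$, delete an arbitrary set of $r_y$ edges at $y$, let $R$ consist of exactly the deleted edges, and set $H':=H-R$. Then $d_{H'}(y)=q_y d$ for every $y$, while $d_R(y)=r_y<d$ and $d_R(x)\le d_H(x)<d$, so $\Delta(R)<d$ already holds. This reduction is not merely cosmetic: if $d\nmid\Delta(H)$ then a balanced colouring of $H$ itself could create a star of size $\lceil\Delta(H)/d\rceil>\Delta(H)/d$, and the colour classes need not be isomorphic, so discarding the remainders $r_y$ is essential.

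The main step is to partition $E(H')$ into subgraphs $\SF_1,\dots,\SF_d$ with $d_{\SF_c}(x)\le1$ for all $x\in X$, $c\in[d]$ (so each $\SF_c$ is a star forest with centres in $Y$) and $d_{\SF_c}(y)=q_y$ for all $y\in Y$, $c\in[d]$ (so the $\SF_c$ share the multiset of star sizes $\{q_y:q_y\ge1\}$, hence are isomorphic). For this, partition the $d q_y$ edges at each $y$ into $q_y$ ``bundles'' of $d$ edges arbitrarily, and build a bipartite graph $B$ with parts $X$ and $\{(y,j):y\in Y,\ 1\le j\le q_y\}$, joining $x$ to $(y,j)$ exactly when the edge $xy$ of $H'$ lies in the $j$-th bundle at $y$. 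Since $H'$ is simple, each bundle sends its $d$ edges to $d$ distinct vertices of $X$, so every vertex $(y,j)$ has degree exactly $d$ in $B$, whereas every $x$ has degree $d_{H'}(x)\le d_H(x)<d$. Thus $\Delta(B)=d$ (the case $E(H')=\emptyset$ being trivial), so by König's theorem $B$ has a proper edge-colouring with $d$ colours. Colouring each edge $xy$ of $H'$ by the colour of the corresponding edge of $B$ and letting $\SF_c$ be the colour-$c$ edges works: properness gives $d_{\SF_c}(x)\le1$ at every $x$, and since each $(y,j)$ has degree $d$ it sees every colour once, so colour $c$ occurs once in each of the $q_y$ bundles at $y$, i.e.\ $d_{\SF_c}(y)=q_y$.

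Finally I would assemble the pieces. Each $\SF_c$ is a vertex-disjoint union of stars $K_{1,q_y}$ over the $y$ with $q_y\ge1$, so it has at most $|Y|$ components with star sizes $q_y\le d_H(y)/d\le\Delta(H)/d$; the $\SF_c$ are pairwise isomorphic as they share the $Y$-side degree sequence $(q_y)_{y\in Y}$; and $\{\SF_c\}_c$ partitions $E(H')$ while $R=E(H)\setminus E(H')$ satisfies $\Delta(R)<d$, giving the decomposition $(\SF_1,\dots,\SF_d,R)$. The only real content is the third paragraph: the bundling trick is what converts the one-sided degree bound on $X$ into an honest $\Delta(B)=d$ to which König's (Hall-based) theorem applies, and I expect that setup — rather than any calculation — to be the crux.
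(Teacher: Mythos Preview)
Your proposal is correct and follows essentially the same approach as the paper: both remove $d(y)\bmod d$ edges at each $y\in Y$ to form $R$, then split each remaining $Y$-vertex into $\lfloor d(y)/d\rfloor$ clones of degree exactly $d$ (your ``bundles''), and apply K\"onig's theorem to the resulting bipartite graph of maximum degree $d$. The only cosmetic difference is that the paper names the set $Y_1=\{y:d(y)\ge d\}$ explicitly and observes the star forests have exactly $|Y_1|$ components, whereas you state the weaker bound $|Y|$ that the lemma actually requires.
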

		\begin{proof}
			Let \(Y_1\) be the set of vertices in \(Y\) of degree at least \(d\). Let $R$ be a subgraph of $H$ consisting of $d(y)\pmod d\leq d-1$ edges through each $y\in Y$, noting that it contains all edges through $Y\setminus Y_1$ and has $\Delta(R)<d$. Moreover, the degrees of all the vertices from $Y_1$ in graph $H\setminus R$ are divisible by $d$.
			We define a new bipartite graph $H'$ as follows.
			For each \(y\in Y_1\), introduce vertices \(y_1,\hdots, y_{\floor{d(y)/d}}\) and set $Y'=\bigcup_{y\in Y_1}\{y_1,\hdots, y_{\floor{d(y)/d}}\}$. Let the two parts of $H'$ be $X$ and $Y'$.
			For the edges of $H'$, split $N_{H\setminus R}(y)$ into disjoint sets $N_1, \ldots, N_{d(y)/d}$ of size $d$, and join each $y_i$ to all the vertices in $N_i$. This way, contracting each set $\{y_1,\hdots, y_{\floor{d(y)/d}}\}$ into a single vertex turns $H'$ into $H\setminus R$.

			Notice that $H'$ is a bipartite graph with maximum degree at most $d$. Then by K\"onig's theorem, \(E(H\prm)\) can be decomposed into \(d\) matchings $M_1, \ldots, M_d$. 
			Since all vertices in \(Y\prm\) have degree exactly \(d\), each matching uses precisely one edge incident to each \(y_i\).
			Thus, each matching $M_i$ corresponds to a star forest $\SF_i$ in $H$ whose stars are centred at $Y_1$ and such that the star centred at $y$ has size \(\floor{d(y)/d}\), for $y \in Y_1$. 
			In particular, the star forests $\SF_1, \ldots, \SF_d$ are isomorphic, have \(\abs{Y_1}\) components, and their stars have size at most $\Delta(H)/d$.
		\end{proof}

	\subsection{Combining graphs}

		A graph $G$ is $r$-divisible, if for every graph $H$, the number of connected components of $G$ which are isomorphic to $H$ is divisible by $r$. Note that every $r$-divisible graph $G$ can be edge-decomposed into $r$ isomorphic vertex-disjoint subgraphs, consisting of a $1/r$ fraction  of components belonging in the same isomorphism class. Use $G/r$ to denote the isomorphism class of these graphs. 
		In this section we prove two lemmas that will be used to combine divisible graphs with stars and matchings.

		The following lemma is used for combining divisible graphs with stars.
		\begin{lemma}\label{lem:combine-star-with-forest}
			Let $H$ be $4$-divisible, $S$ a star of even size, and $G$ have a decomposition $(H, S)$. Then there is a decomposition $(H_1, \ldots, H_4, S_1, \ldots, S_4)$ of $G$ such that: $H_i, S_i$ are vertex-disjoint for $i \in [4]$; each of the graphs $H_1, \dots, H_4$ is isomorphic to $H/4$; and $S_1, \ldots, S_4$ are stars with $e(S_1)=e(S)/2$.
		\end{lemma}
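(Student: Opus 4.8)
The plan is to use $4$-divisibility to split $H$ into four vertex-disjoint isomorphic copies, and then to split the star $S$ into four sub-stars — all sharing the centre of $S$ — so as to respect both the required vertex-disjointness and the size constraint $e(S_1) = e(S)/2$.

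First I would dispose of the degenerate case $e(S) = 0$ (take each $S_i$ empty and $H_1, \ldots, H_4$ any realisation of the $1/4$-split of $H$), so assume $e(S) = 2s$ with $s \ge 1$, let $v$ be the centre of $S$, and let $L$ be its set of $2s$ distinct leaves. By $4$-divisibility, fix a partition of the components of $H$ into four vertex-disjoint graphs, each isomorphic to $H/4$. Since these four graphs are vertex-disjoint, $v$ lies in the vertex set of at most one of them, and each leaf of $L$ lies in the vertex set of at most one of them; in particular the four quantities $|L \cap V(\cdot)|$ sum to at most $|L| = 2s$. Hence among the (at least three) parts not containing $v$ there is one, which I will call $H_1$, with $|L \cap V(H_1)| \le 2s/3 < s$; label the remaining three parts $H_2, H_3, H_4$ in any order.

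Next I would construct a labelling $\ell \colon L \to [4]$ and then let $S_i$ be the star with centre $v$ and leaf set $\ell^{-1}(i)$. Since $|L \setminus V(H_1)| \ge 2s - 2s/3 \ge s$, I can choose $s$ leaves lying outside $V(H_1)$ and assign them label $1$; this forces $e(S_1) = s = e(S)/2$, and since $v \notin V(H_1)$ it makes $S_1$ vertex-disjoint from $H_1$. For each of the remaining $s$ leaves $w$, the labels in $\{2,3,4\}$ that we must avoid are the (at most one) index $i$ with $w \in V(H_i)$ together with the (at most one) index $i$ with $v \in V(H_i)$ — at most two forbidden values out of three — so a valid label $\ell(w) \in \{2,3,4\}$ exists. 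With $H_i$ and $S_i$ defined this way, the verification is routine: $E(H) = \bigsqcup_i E(H_i)$ and $E(S) = \bigsqcup_i E(S_i)$, so together they decompose $G$; each $H_i \cong H/4$; $e(S_1) = e(S)/2$; and $S_i$ is vertex-disjoint from $H_i$ because $V(S_i) \subseteq \{v\} \cup \ell^{-1}(i)$, each leaf with label $i$ avoids $V(H_i)$ by construction, and $v \notin V(H_i)$ whenever $S_i$ is non-empty (for $i=1$ by the choice of $H_1$, and for $i \in \{2,3,4\}$ because $v \in V(H_i)$ would make label $i$ forbidden, hence unused).

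There is no real obstacle here; the only point needing care is the bookkeeping around the centre $v$ — one must pick the "first" copy $H_1$ to avoid $v$ \emph{and} to contain at most half the leaves of $S$, and ensure that whenever a later copy $H_i$ contains $v$ the corresponding $S_i$ comes out empty. Both are taken care of by the elementary averaging and greedy arguments above, which rely only on the fact that four vertex-disjoint graphs can jointly cover each leaf (and the centre $v$) at most once.
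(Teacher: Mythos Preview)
Your proof is correct and follows essentially the same approach as the paper's: split $H$ into four vertex-disjoint copies of $H/4$, use averaging to find a copy $H_1$ that avoids the centre and meets at most a third of the leaves, route $e(S)/2$ leaves into $S_1$, and then distribute the remaining leaves while avoiding conflicts. The only cosmetic difference is in the last step --- the paper fixes the copy containing the centre to be $H_4$, sets $S_4=\emptyset$, and routes each remaining leaf to $S_2$ or $S_3$ according to whether it touches $H_3$, whereas you handle the centre via a forbidden-label argument; both arguments exploit the same ``at most two obstructions out of three choices'' observation.
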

		\begin{proof}
			Let $(H_1, \ldots, H_4)$ be a decomposition of $H$ into four copies of $H/4$.
			Without loss of generality, the centre $c$ of $S$ is not in $H_1\cup H_2 \cup H_3$. Order $H_1, H_2, H_3$ so that $|V(S)\cap V(H_1)| \leq |V(S) \cap V(H_2)| \leq |V(S)\cap V(H_3)|$, noting that this gives $|V(S)\cap V(H_1)|\leq e(S)/3$. 
			Then $|V(S)\setminus V(H_1)|\geq 2e(S)/3$, so we can pick a star $S_1$ of size $e(S)/2$ disjoint from $H_1$. Let $S_2$ be the subgraph of $S \setminus S_1$ whose edges touch $H_3$, let $S_3=S\setminus (S_1\cup S_2)$, and set $S_4=\emptyset$.
            Then  $H_i, S_i$ are vertex-disjoint for all $i$: for $i \in \{1, 3, 4\}$ this is trivial by construction; and for $i = 2$ this happens because $H_2, H_3$ are vertex-disjoint and $V(S_2)\subseteq V(H_3)\cup\{c\}$. 
		\end{proof}

		For graphs $H_1, H_2$ and integer $a$, the sum $H_1 + H_2$ is the disjoint union of $H_1$ and $H_2$ and $a \cdot H_1$ is the disjoint union of $a$ copies of $H_1$. The next lemma combines divisible graphs with matchings.

		\begin{lemma} \label{lem:large-matching-forest}
			Let \(\ell, a_1,\hdots, a_k\) be positive integers.
			Let $F_1, \ldots, F_k, H$ be graphs satisfying $H \cong 5a_1 \cdot F_1 + \ldots + 5a_k \cdot F_k$, 
			and let $M$ be a matching of size $5\ell$.
			If
			\begin{equation} \label{eq:large-matching-forest}
				\ell > \sqrt{ \frac{\ln5}{2}\, \sum_{j=1}^k a_j \abs{V(F_j)}^2},
			\end{equation}
			then there is a decomposition of $M \cup H$ into five graphs, each of which is isomorphic to $H/5 + M/5$ for $i \in [5]$.
		\end{lemma}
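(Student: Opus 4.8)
The plan is to split the components of $H$ into five groups, each forming a copy of $H/5$, chosen so that no group meets too many edges of $M$, and then to distribute the edges of $M$ among the five groups by a Hall-type argument. Throughout I assume (as holds in the application) that $M$ and $H$ are edge-disjoint, so $e(M\cup H)=e(H)+5\ell$; recall that $H/5\cong a_1\cdot F_1+\dots+a_k\cdot F_k$ and that $M/5$ is a matching of size $\ell$. First I would set up the groups: for each $j\in[k]$, partition the $5a_j$ copies of $F_j$ inside $H$ uniformly at random into five labelled parts of size $a_j$, doing this independently over $j$, and for $g\in[5]$ let $H^{(g)}$ be the union of the parts labelled $g$, so that $H^{(g)}\cong H/5$. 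Since $H$ is a disjoint union of these copies, every vertex of $H$ lies in exactly one component and hence in exactly one $H^{(g)}$; consequently, for each edge $e=uv\in E(M)$ at most two of the groups contain an endpoint of $e$, so the set $A(e)\subseteq[5]$ of \emph{allowed} groups (those $g$ with $V(H^{(g)})\cap\{u,v\}=\emptyset$) satisfies $\abs{A(e)}\ge3$.

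Next, suppose the random partition turns out to have the property that for every $g\in[5]$ at most $3\ell$ edges of $M$ meet $V(H^{(g)})$; denote this number by $X_g$. I claim one can then partition $E(M)$ into sets $M^{(1)},\dots,M^{(5)}$, each of size $\ell$, with $g\in A(e)$ for all $e\in M^{(g)}$. Indeed, this is a bipartite transportation problem: create $\ell$ slots for each group, join each edge of $M$ to the slots of the groups in $A(e)$, and seek a perfect matching between the $5\ell$ edges and the $5\ell$ slots; by integrality it then suffices to verify $\#\{e\in E(M): A(e)\subseteq T\}\le\abs{T}\ell$ for every $T\subseteq[5]$. For $\abs{T}\le2$ this count is $0$ since $\abs{A(e)}\ge3$; for $\abs{T}=3$ it is at most $\min_{g\notin T}X_g\le3\ell$; for $\abs{T}=4$, with $[5]\setminus T=\{g\}$, it equals $X_g\le3\ell$; and for $T=[5]$ it is $5\ell$. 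Given such $M^{(1)},\dots,M^{(5)}$, the graphs $H^{(g)}\cup M^{(g)}$ are pairwise edge-disjoint, cover $E(M)\cup E(H)$, and each is isomorphic to $H/5+M/5$ because $M^{(g)}$ avoids $V(H^{(g)})$; this is exactly the required decomposition.

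It remains to show that a partition with $X_g\le3\ell$ for all $g$ exists, and this is where the hypothesis on $\ell$ enters. For a component $C$ of $H$ let $w_C$ be the number of edges of $M$ meeting $V(C)$; since $M$ is a matching, $w_C\le\abs{V(C)}$, and since each edge of $M$ meets at most two components, $\sum_C w_C\le 2\,e(M)=10\ell$. Letting $Y_C^{(g)}$ indicate the event $C\in H^{(g)}$, we have $X_g\le Z_g:=\sum_C w_C\,Y_C^{(g)}$ and $\expect{Z_g}=\tfrac15\sum_C w_C\le2\ell$. Writing $Z_g=\sum_{j=1}^k Z_g^{(j)}$, where $Z_g^{(j)}$ is the sum of $w_C$ over the $a_j$ type-$j$ copies assigned to group $g$ — a sum of $a_j$ values in $[0,\abs{V(F_j)}]$ sampled without replacement from the $5a_j$ type-$j$ weights, and the $Z_g^{(j)}$ are independent over $j$ — Hoeffding's inequality (sampling without replacement is at least as concentrated as with replacement; compare the Chernoff bound in \Cref{thm:Chernoff}) gives $\prob{Z_g\ge\expect{Z_g}+\ell}\le\exp\big(-2\ell^2/\sum_j a_j\abs{V(F_j)}^2\big)<\exp(-\ln5)=\tfrac15$, the last inequality being exactly the hypothesis on $\ell$. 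A union bound over $g\in[5]$ then shows that with positive probability $Z_g\le\expect{Z_g}+\ell\le3\ell$, and hence $X_g\le3\ell$, for every $g$. The one genuine obstacle is this concentration step: $Z_g$ is not a sum of independent indicators but of bounded weights drawn without replacement within each type, so one must use Hoeffding's inequality in its sampling-without-replacement form — or, equivalently, regroup the randomness into one independent block per unordered $5$-tuple of sibling copies, turning $Z_g$ into a genuine sum of $\sum_j a_j$ independent variables with ranges $[0,\abs{V(F_j)}]$. The constants are arranged precisely so that the threshold $3\ell$ for $X_g$ leaves the slack $\ell$ above $\expect{Z_g}\le2\ell$ that the bound $\ell>\sqrt{\tfrac{\ln5}{2}\sum_j a_j\abs{V(F_j)}^2}$ exactly supplies.
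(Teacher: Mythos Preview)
Your proposal is correct and follows essentially the same approach as the paper. The paper also randomly splits $H$ into five copies of $H/5$, proves via concentration that each piece is vertex-disjoint from more than $2\ell$ edges of $M$ (equivalently your $X_g<3\ell$), and then finishes with a Hall argument on the bipartite graph between $M$ and $\ell$ copies of each $H_i$. The only cosmetic differences are that the paper organises the randomness from the outset exactly as in your ``regrouping'' remark (partition the $5a_j$ copies of $F_j$ into $a_j$ blocks of five and permute within each block), which makes McDiarmid's inequality apply directly with Lipschitz constants $|V(F_j)|$ and yields precisely the bound $\exp\big(-2\ell^2/\sum_j a_j|V(F_j)|^2\big)$; and the paper phrases the concentration for the complementary quantity $Y_i=5\ell-X_i$.
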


		We now state McDiarmid's inequality, which will be used in the proof of the last lemma.

		\begin{theorem}[McDiarmid's inequality] \label{thm:mcdiarmid}
			Let \(X_1,\hdots,X_m\) be independent random variables with \(X_i\) taking values in a set \(S_i\).
			Let \(f: \prod_{i\in [m]} S_i \rightarrow \R\) be a function such that
			for any \(\xb, \xb\prm \in \prod_{i\in [m]} S_i\) differing only at the \(k^{\text{th}}\) coordinate we have
			\[
				\abs{f(\xb) - f(\xb\prm)} \le c_k,
			\]
			for some \(c_k \in \R\).
			Then, for every $t > 0$,
			\[
			\prob{f(X_1,\hdots,X_m)\le \expect{f(X_1,\hdots,X_m)} - t}
			\le \exp\left( -\frac{2t^2}{\sum_{k=1}^m c_k^2} \right).
			\]
		\end{theorem}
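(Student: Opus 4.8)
The plan is to deduce the bounded differences inequality from the Azuma--Hoeffding martingale bound applied to the Doob martingale of $f$, which is the standard route.

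First I would set up the Doob martingale. For $k \in \{0,1,\dots,m\}$ define
\[
	Z_k := \expect{f(X_1,\dots,X_m) \mid X_1,\dots,X_k},
\]
so that $Z_0 = \expect{f(X_1,\dots,X_m)}$, $Z_m = f(X_1,\dots,X_m)$, and $(Z_k)_{k=0}^m$ is a martingale with respect to the filtration generated by $X_1,\dots,X_k$. Because the $X_i$ are independent, conditioning on $X_1,\dots,X_k$ simply averages over the remaining coordinates: writing $g_k(x_1,\dots,x_k) := \expect{f(x_1,\dots,x_k,X_{k+1},\dots,X_m)}$ we have $Z_k = g_k(X_1,\dots,X_k)$ and $Z_{k-1} = \expect{g_k(X_1,\dots,X_{k-1},X_k) \mid X_1,\dots,X_{k-1}}$; equivalently $g_{k-1}(x_1,\dots,x_{k-1}) = \expect{g_k(x_1,\dots,x_{k-1},X_k)}$, a weighted average of the numbers $g_k(x_1,\dots,x_{k-1},x)$ over $x \in S_k$.

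The crux is to confine the martingale increments $D_k := Z_k - Z_{k-1}$ to an interval of length at most $c_k$ whose endpoints depend only on $X_1,\dots,X_{k-1}$. Conditioning on $X_1 = x_1,\dots,X_{k-1} = x_{k-1}$, set
\[
	U_k := \sup_{x \in S_k} g_k(x_1,\dots,x_{k-1},x), \qquad L_k := \inf_{x \in S_k} g_k(x_1,\dots,x_{k-1},x).
\]
Since $g_{k-1}(x_1,\dots,x_{k-1})$ is an average of the $g_k(x_1,\dots,x_{k-1},x)$, we have $L_k - g_{k-1}(x_1,\dots,x_{k-1}) \le D_k \le U_k - g_{k-1}(x_1,\dots,x_{k-1})$, so $D_k$ lies (conditionally on the past) in an interval of length $U_k - L_k$. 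Moreover, for any $x,y \in S_k$ the difference $g_k(x_1,\dots,x_{k-1},x) - g_k(x_1,\dots,x_{k-1},y)$ equals $\expect{f(\dots,x,\dots) - f(\dots,y,\dots)}$, an average of differences of $f$ at inputs agreeing off the $k^{\text{th}}$ coordinate, hence has modulus at most $c_k$; therefore $U_k - L_k \le c_k$. (When each $S_i$ is finite, as in every application in this paper, the sup and inf are attained and there are no measurability issues; in general one uses essential extrema.)

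Finally I would run the exponential moment argument. By Hoeffding's lemma --- if $\expect{Y \mid \mathcal{G}} = 0$ and $Y$ lies, conditionally on $\mathcal{G}$, in an interval of length at most $c$, then $\expect{\e^{sY} \mid \mathcal{G}} \le \e^{s^2 c^2 / 8}$ for every $s$ --- applied with $Y = D_k$, $s = -\lambda$, and $\mathcal{G} = \sigma(X_1,\dots,X_{k-1})$, one peels the increments off one at a time to obtain, for $\lambda > 0$,
\[
	\expect{\e^{-\lambda(Z_m - Z_0)}} = \expect{\textstyle\prod_{k=1}^m \e^{-\lambda D_k}} \le \exp\!\Big(\tfrac{\lambda^2}{8}\textstyle\sum_{k=1}^m c_k^2\Big).
\]
Markov's inequality then gives $\prob{Z_m - Z_0 \le -t} \le \e^{-\lambda t}\,\expect{\e^{-\lambda(Z_m - Z_0)}} \le \exp\big(-\lambda t + \tfrac{\lambda^2}{8}\sum_k c_k^2\big)$, and choosing $\lambda = 4t/\sum_k c_k^2$ yields $\exp\big(-2t^2/\sum_k c_k^2\big)$. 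Since $Z_m - Z_0 = f(X_1,\dots,X_m) - \expect{f(X_1,\dots,X_m)}$, this is exactly the claimed inequality. I expect the main obstacle to be the confinement step: one must verify that the interval bounding $D_k$ is measurable with respect to the past and that its length is controlled by the hypothesis (this is precisely where independence enters, via $g_{k-1}$ being an average of $g_k$ over $X_k$ alone), and one must supply Hoeffding's lemma, whose proof is a short convexity argument together with a second-order Taylor estimate of the cumulant generating function of a bounded, mean-zero random variable; the remaining manipulations are routine.
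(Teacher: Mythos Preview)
Your proof is correct and follows the standard Doob martingale plus Azuma--Hoeffding route to McDiarmid's inequality. Note, however, that the paper does not prove this theorem at all: it is stated as a known tool (attributed to McDiarmid) and used as a black box in the proof of Lemma~\ref{lem:large-matching-forest}, so there is no ``paper's own proof'' to compare against.
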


		\begin{proof}[Proof of Lemma~\ref{lem:large-matching-forest}]
			For each \(j\in[k]\) partition the components of $H$ isomorphic to \(F_j\) into \(a_j\) sets of size 5 each.
			Permute uniformly at random the five copies of \(F_j\) in each set and 
            for \(j \in [k]\), \(s\in [a_j]\) let
			\(
			 X_{j,s} 
			\)
			be the resulting random permutation for the $s^{\text{th}}$ set of copies of $F_j$.
			This defines a random partition of \(H\) into five graphs \(H_1, \hdots, H_5\), where \(H_i\) consists of the copies of \(F_1,\hdots,F_k\) at the \(i^{\text{th}}\) position in each set, and so $H_i \cong H/5$.
			We will show that, with positive probability, every \(H_i\) is vertex-disjoint of more than \(2\ell\) edges of \(M\).
			
			For \(e\in M,\, i\in [5]\), let \(Y_i^e\) be the indicator random  variable for the event that \(e\) does not share a vertex with \(H_i\).
			Then
			\(
			\expect{Y_i^e} \geq 3/5,
			\)
			since the endpoints of \(e\) can lie on at most two of the graphs \(H_1,\hdots, H_5\).
			Let
			\(
			Y_i = \sum_{e\in M} Y_i^e
			\)
			be the number of edges in \(M\) that \(H_i\) is vertex-disjoint of.
			Then
			\(
			\expect{Y_i} \geq \frac{3}{5} \abs{M} = 3 \ell.
			\)
			Observe that the random variables \(Y_i^e\) and hence \(Y_i\) are determined by 
			\(
			( X_{j,s} )_{j\in[k], s\in[a_j]}.
			\)
			Changing the value of \(X_{j,s}\) for one pair $(j, s)$ changes the value of \(Y_i\) by at most \(\abs{V(F_j)}\).
			Hence, by Theorem~\ref{thm:mcdiarmid} (McDiarmid's inequality),
			\[
				\prob{Y_i \leq 2l} \leq
				\prob{Y_i \le \expect{Y_i} - \ell} \le
				\exp\left( -
				\frac{2\ell^2}{\sum_{j=1}^k a_j \abs{V(F_j)}^2}
				\right),
			\]
			which is less than \(1/5\) for
			\(
			\ell > \sqrt{ \frac{\ln 5}{2}\, \sum_j a_j \abs{V(F_j)}^2}.
			\)
			By taking a union bound over \(i\in [5]\) we deduce that there exists a decomposition of \(H\) into \(H_1,\ldots, H_5\) such that each \(H_i\) is isomorphic to $H/5$ and is vertex-disjoint of more than \(2\ell\) edges of \(M\).
			
			Consider the bipartite graph \(\calG\) with bipartition \((M,\calH)\), where 
            \(\calH\) consists of \(\ell\) copies of each \(H_i\), so $\abs{M} = \abs{\calH}=5\ell$.
			For \(e\in M,\, H_i\in \calH\), put \(e H_i\in E(\calG)\) if and only if the edge \(e\) is vertex-disjoint of the graph \(H_i\).
			Then we have \(d_\calG(e) \geq 3\ell\) and \(d_\calG(H_i) > 2\ell\) for all \(e\in M, H_i\in \calH\). 
			It is easy to see that \(\calG\) satisfies Hall's condition:
			if \(S\subseteq M\) has \(\abs{S} \leq 3\ell,\) clearly \(\abs{N(S)} \geq 3\ell \geq \abs{S}\); 
			and if \(\abs{S}>3\ell\), then 
            \(\abs{S}>\abs{M} - d_\calG(H_i)\) for all \(H_i \in \calH\), so
            \(N(S) = \calH\).
			Hence \(\calG\) has a perfect matching.
			This gives a decomposition of \(M\cup H\) as follows: let \(G_i\) be the union of \(H_i\) and the edges of \(M\) that are matched with the copies of \(H_i\) in \(\calG\).
			Then \(G_i \cong H/5 + M/5\), and so $(G_1, \hdots, G_5)$ satisfies the requirements of the lemma.
		\end{proof}

	\subsection{Almost decomposing dense graphs into $K_{t,t}$-forests}
		
		An \emph{$F$-forest} is a disjoint union of copies of $F$.
		The aim of this subsection is to prove the following lemma, which almost decomposes a dense graph into isomorphic $K_{t,t}$-forests.
		
		\begin{lemma} \label{lem:Ktt-decomposition}
			Let $t \ge 1$ be a fixed integer, let $n$ be a large integer, and let $k$ be an integer satisfying 
           \(\frac{n}{\sqrt{t}} \le k \le \frac{n^2}{t^{5/2}}
            \). 
			Suppose that $G$ is a graph on $n$ vertices. Then $G$ can be decomposed into \(k\) isomorphic $K_{t,t}$-forests and a remainder of at most $4n^2/\sqrt{t}$ edges. 
		\end{lemma}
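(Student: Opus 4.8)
My plan is to construct the $k$ forests directly, pulling out one $K_{t,t}$-packing of a fixed size at a time, rather than first taking an (essentially) complete $K_{t,t}$-decomposition of $G$ and then partitioning the copies into forests. The latter route runs into a genuine obstacle: the ``conflict graph'' on the copies (two copies adjacent if they share a vertex) can have maximum degree of order $n$, which exceeds $k$ at the lower end $k\approx n/\sqrt{t}$ of the allowed range, so it need not admit an equitable $k$-colouring and one would be forced to discard too many copies. Extracting packings directly sidesteps this, since a $K_{t,t}$-packing (a vertex-disjoint union of copies) is automatically a legitimate forest.

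In detail, if $e(G)\le 4n^2/\sqrt{t}$ the lemma is trivial (take all $k$ forests empty and the remainder equal to $G$), so assume $e(G)>4n^2/\sqrt{t}$. Write $E_0:=\mathrm{ex}(n,K_{t,t})$; by the K\H{o}v\'ari--S\'os--Tur\'an theorem $E_0=O(n^{2-1/t})$, so $E_0\le n^{2-1/t}$ for $n$ large. Set
\[
  j:=\left\lfloor \frac{e(G)-E_0}{2tn+kt^2}\right\rfloor .
\]
Using $e(G)\le\binom n2$ one checks $2tj<n/2$, and using $e(G)>4n^2/\sqrt{t}$, $k\le n^2/t^{5/2}$ and $E_0=o(n^2)$ one checks $j\ge 1$. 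Now define edge-disjoint $K_{t,t}$-forests $\mathcal{P}_1,\dots,\mathcal{P}_k$ greedily. Having chosen $\mathcal{P}_1,\dots,\mathcal{P}_r$, each a disjoint union of exactly $j$ copies of $K_{t,t}$, let $G_r:=G-(E(\mathcal{P}_1)\cup\dots\cup E(\mathcal{P}_r))$ and let $\mathcal{Q}$ be a maximal $K_{t,t}$-packing in $G_r$, with $q$ copies. Deleting the $2tq$ vertices of $\mathcal{Q}$ from $G_r$ leaves a $K_{t,t}$-free graph, so $e(G_r)\le 2tqn+E_0$. On the other hand $e(G_r)\ge e(G)-kjt^2$, and the choice of $j$ yields $e(G)-kjt^2\ge\frac{2tn\,e(G)+kt^2E_0}{2tn+kt^2}\ge 2tjn+E_0$; hence $q\ge j$, and we may take $\mathcal{P}_{r+1}$ to consist of $j$ of the copies of $\mathcal{Q}$. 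Running this for $r=0,1,\dots,k-1$ produces all $k$ forests.

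Finally, $\mathcal{P}_1,\dots,\mathcal{P}_k$ are pairwise edge-disjoint and each is a disjoint union of exactly $j$ copies of $K_{t,t}$, so they are isomorphic; and the remainder $R:=G-\bigcup_r E(\mathcal{P}_r)$ satisfies $e(R)=e(G)-kjt^2\le\frac{2n\,e(G)}{kt}+E_0+kt^2$, which by $e(G)\le\binom n2$, $k\ge n/\sqrt{t}$ and $k\le n^2/t^{5/2}$ is at most $\frac{2n^2}{\sqrt{t}}+o\big(n^2/\sqrt{t}\big)<\frac{4n^2}{\sqrt{t}}$. The one point that requires care is the choice of $j$: it must be small enough that the greedy extraction survives all $k$ rounds (equivalently $e(G_{k-1})\ge 2tjn+E_0$) and large enough that $kjt^2$ covers all but $4n^2/\sqrt{t}$ edges, and the chain $2tjn+E_0\le\frac{2tn\,e(G)+kt^2E_0}{2tn+kt^2}\le e(G)-kjt^2$ is exactly what forces both at once — it is here that both hypotheses $n/\sqrt{t}\le k\le n^2/t^{5/2}$ enter. (An alternative organisation would first remove a maximal $K_{t,t}$-packing from $G$, redistribute its copies, and iterate; but then balancing the forest sizes to be exactly equal while keeping the waste within the $4n^2/\sqrt{t}$ budget is fiddlier, so I prefer fixing the common size $j$ up front.)
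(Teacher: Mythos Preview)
Your proof is correct and takes a genuinely different, more elementary route than the paper. The paper first partitions $V(G)$ via a projective plane of order $\Theta(\sqrt n)$ into $\Theta(n)$ subgraphs on $O(\sqrt n)$ vertices each, greedily pulls $K_{t,t}$'s from every piece (via K\H{o}v\'ari--S\'os--Tur\'an), and observes that the resulting $2t$-uniform hypergraph of copies has codegree $O(\sqrt n)=o(\Delta)$; Pippenger--Spencer then decomposes these copies into at most $2n/t$ matchings (i.e.\ $K_{t,t}$-forests), and a final balancing step (Proposition~\ref{prop:division}) cuts these into $k$ equal-size forests. So the obstacle you flag --- that the conflict graph on copies can have degree of order $n$ --- is precisely what the projective-plane trick is engineered to dodge, and it does work. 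Your argument sidesteps all of this machinery: by fixing the common forest size $j$ up front and repeatedly extracting a maximal packing, you only need K\H{o}v\'ari--S\'os--Tur\'an and the elementary fact that deleting a maximal $K_{t,t}$-packing leaves a $K_{t,t}$-free graph; the chain $2tjn+E_0\le e(G)-kjt^2$ is exactly the inequality $j(2tn+kt^2)\le e(G)-E_0$ guaranteed by the floor, so the greedy survives all $k$ rounds, and the remainder bound follows from $j\ge \frac{e(G)-E_0}{2tn+kt^2}-1$ together with the two hypotheses on $k$. The paper's route does yield a stronger intermediate statement (Lemma~\ref{lem:unequal-Ktt-decomposition}, with only $O(n^{2-1/2t})$ leftover edges packed into $\le 2n/t$ forests), but for Lemma~\ref{lem:Ktt-decomposition} itself your approach is both sufficient and considerably cleaner.
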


		The proof will use the following result due to Pippenger and Spencer \cite{pippenger-spencer}. 
		For a hypergraph \(\calH\) and \(v\in V(\calH)\), the \emph{degree} of \(v\), denoted \(d(v)\), is the number of edges incident to \(v\).
		We denote the maximum degree of \(\calH\) by \(\Delta(\calH)\) and the minimum degree by \(\delta(\calH)\).
		For \(u\neq v\), the \emph{codegree} of \(u,v\), denoted by \(d(u,v)\), is the number of edges incident to both \(u\) and \(v\).
		A \emph{matching} in a hypergraph is a collection of pairwise vertex-disjoint edges.
        
		\begin{theorem}[Pippenger--Spencer,~{\cite[Theorem 1.1]{pippenger-spencer}}]\label{thm:pipp-spencer}
			Let $r$ be a positive integer and \(\mu>0\).
            Then for sufficiently small \(\nu>0\) and for sufficiently large $n$ the following holds.
		If $\calH$ is an $r$-uniform hypergraph on \(n\) vertices with 
            \(\delta(\calH) \ge (1-\nu) \Delta(\calH) \)
            and
            $d(u,v) \le \nu \Delta(\calH)$, for all distinct \(u, v \in V(\calH)\), then
	there is a decomposition of $\calH$ into at most \((1+\mu)\Delta(\calH)\) matchings.
		\end{theorem}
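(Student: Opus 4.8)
The natural approach is the R\"odl nibble (the semi-random method). The proof splits into two phases. Set $D := \Delta(\calH)$. In the first phase we iteratively strip off near-perfect matchings, each receiving its own colour, until the maximum degree has dropped to a small constant fraction $\alpha D$ of $D$; this will cost at most $(1-\alpha)D(1+o(1))$ colours. In the second phase the leftover hypergraph $\calH^\ast$ has maximum degree at most $\alpha D(1+o(1))$, and a trivial greedy bound colours it with at most $r\alpha D(1+o(1))$ further colours: colour the edges one at a time, and observe that each edge meets at most $r\big(\alpha D(1+o(1))-1\big)$ previously coloured edges, namely at most $\alpha D(1+o(1))-1$ through each of its $r$ vertices. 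Together the two phases use at most $(1-\alpha+r\alpha)D(1+o(1)) = \big(1+(r-1)\alpha\big)D(1+o(1))$ colours; choosing $\alpha := \mu/(5r)$ this is at most $(1+\mu)D$ once $D$ is large in terms of $r$ and $\mu$, with $\nu$ taken small in terms of $r$ and $\mu$.

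The engine of the first phase is the near-perfect matching theorem of Frankl and R\"odl (and Pippenger): every $r$-uniform hypergraph that is nearly $D'$-regular, with all degrees in $[(1-o(1))D', D']$, with all codegrees at most $o(D')$, and with $D'$ large, has a matching covering all but an $o(1)$-fraction of its vertices; moreover this matching can be produced by a randomised procedure under which \emph{every individual vertex} is covered with probability $1-o(1)$. This is itself proved by the nibble: repeatedly select each surviving edge independently with a small probability, discard any selected edge meeting another selected edge, delete the resulting partial matching together with the vertices it isolates, and iterate; a differential-equations / potential-function analysis — whose required concentration estimates are exactly where the small-codegree hypothesis enters — shows that the accumulated partial matchings cover $1-o(1)$ of the vertices, uniformly over the choice of vertex.

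We now run the first phase. Write $\calH_0 := \calH$, and having built $\calH_{i-1}$ (still nearly $D_{i-1}$-regular with codegrees $o(D_{i-1})$), apply the randomised matching procedure to obtain a matching $M_i$, colour its edges with colour $i$, and set $\calH_i := \calH_{i-1}\setminus M_i$. Codegrees only decrease along the way, so that hypothesis is preserved. The point requiring care is that $\calH_i$ stays \emph{nearly regular}: the degree of a vertex $v$ in $\calH_i$ is $D - \sum_{j\le i}\mathbf{1}[v\in M_j]$, and since, conditionally on the past, each indicator has probability $1-o(1)$, a martingale concentration argument shows that with high probability $\sum_{j\le i}\mathbf{1}[v\in M_j] = (1-o(1))i$ simultaneously for all $v$; hence $D_i := D-(1-o(1))i$ works and $\calH_i$ is nearly $D_i$-regular. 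We stop at the first index $t$ with $D_t \le \alpha D$, so that $t = (1+o(1))(1-\alpha)D$ colours have been used and the leftover $\calH^\ast := \calH_t$ has $\Delta(\calH^\ast)\le \alpha D(1+o(1))$; feeding $\calH^\ast$ into the second phase finishes the colouring.

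The main obstacle is the concentration bookkeeping that pervades both the proof of the near-perfect matching theorem and the outer iteration: one must show that over $\Theta(D)$ rounds the degrees stay concentrated (so the matching theorem keeps applying) and the codegrees stay controlled, even though the randomness of each round propagates into all subsequent rounds. This is handled by Azuma--Hoeffding and Talagrand-type martingale inequalities, and it is precisely here that the hypothesis $d(u,v)\le \nu\Delta(\calH)$ — which bounds how strongly the fate of one vertex can depend on the random choices near another — is indispensable; without a codegree bound neither the matching theorem nor the near-regularity propagation holds.
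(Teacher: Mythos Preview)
The paper does not prove this theorem at all: it is quoted verbatim from Pippenger and Spencer and used as a black box (the only thing the paper does with it is derive \Cref{cor:pipp-spencer} by a simple regularisation trick). So there is no ``paper's own proof'' to compare your attempt against.

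As for your sketch on its own merits: it is a reasonable high-level outline of one standard route to Pippenger--Spencer, namely iterated near-perfect matchings via the R\"odl nibble followed by a greedy finish. This is not quite the original argument --- Pippenger and Spencer instead randomly colour all edges at once, uncolour the conflicts, and iterate on the uncoloured leftover --- but the two approaches are well known to be interchangeable. You correctly flag the genuine difficulty: over $\Theta(D)$ rounds one must show the hypergraph stays nearly regular with codegrees still $o(D_i)$, and the $o(1)$ error terms must not accumulate. Your sketch asserts this follows from martingale concentration but does not carry it out; in a full proof this is where most of the work lies (in particular one must quantify the $o(1)$ in ``covered with probability $1-o(1)$'' as a function of the current regularity defect and codegree ratio, and check that the resulting recursion closes). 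For the purposes of this paper, however, none of that is needed: the theorem is simply cited.
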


		In fact, we shall need the following easy corollary.

		\begin{corollary} \label{cor:pipp-spencer}
			Let $r$ be a positive integer
            and \(\mu>0\).
            Then for sufficiently small $\nu>0$ and for sufficiently large $n$ the following holds.
            If $\calH$ is an $r$-uniform hypergraph on \(n\) vertices with $d(u,v) \le \nu \Delta(\calH)$, for all \(u, v \in V(\calH)\),
            then there is a decomposition of $\calH$ into at most $(1 + \mu)\Delta(\calH)$ matchings.
		\end{corollary}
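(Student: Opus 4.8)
The plan is to reduce \Cref{cor:pipp-spencer} to \Cref{thm:pipp-spencer} by a padding argument: I would embed $\calH$ as a sub-hypergraph of an almost-regular $r$-uniform hypergraph $\calH^+$ on a larger vertex set, whose codegrees are only marginally larger than those of $\calH$, apply \Cref{thm:pipp-spencer} to $\calH^+$, and restrict the resulting matchings back to $E(\calH)$.

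For the set-up, fix $r$ and $\mu$, let $\nu_0>0$ and $n_0$ be the threshold parameters provided by \Cref{thm:pipp-spencer} for these $r,\mu$, put $\nu:=\nu_0$, and let $n$ be large. Write $\Delta:=\Delta(\calH)$. Two degenerate cases are immediate. If $r=1$ the codegree hypothesis is vacuous, and assigning the $d(v)$ copies of each vertex $v$ to matchings $1,\dots,d(v)$ decomposes $\calH$ into $\Delta\le(1+\mu)\Delta$ matchings. If $\Delta\le 1/\nu$ then $d(u,v)\le\nu\Delta<1$ forces every codegree to be $0$, which for $r\ge 2$ means that $\calH$ is edgeless, so there is nothing to prove. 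Hence from now on I assume $r\ge 2$ and $\Delta\ge 1/\nu_0$, so in particular $\nu_0\Delta\ge 1$.

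The padding step: for $v\in V(\calH)$ let $\mathrm{def}(v):=\Delta-d_\calH(v)$ and $D:=\sum_v\mathrm{def}(v)\le n\Delta$. I would introduce a set $W$ of new \emph{dummy} vertices, taken large (depending on $r,\mu,n,\Delta$), and build an $r$-uniform hypergraph $\calF$ on $V(\calH)\cup W$ each of whose edges contains at most one vertex of $\calH$ — so that $\calF$ is automatically edge-disjoint from $\calH$ — in two steps. First, for each $v$ with $\mathrm{def}(v)>0$, add $\mathrm{def}(v)$ edges of the form $\{v\}\cup D_v$, where the $D_v$'s consist of $r-1$ fresh dummies each; this brings every vertex of $\calH$ to degree exactly $\Delta$, leaves all codegrees between two vertices of $\calH$ equal to their values in $\calH$, and creates only $\calH$–dummy and dummy–dummy codegrees, all at most $1\le\nu_0\Delta$. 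Second, on the dummy set (enlarged by further fresh vertices if needed) add a random $r$-uniform hypergraph realising the prescribed degrees — each dummy getting just enough further edges so that its total degree lies in $[(1-\nu_0)\Delta,\Delta]$ — with all codegrees at most $\tfrac12\nu_0\Delta$; this is a standard random construction (e.g.\ the configuration model on stubs, deleting the $o(1)$ fraction of degenerate edges), the codegree bound following from a Chernoff/union-bound estimate since the expected codegree of a fixed pair of dummies is $O(\Delta/|W|)$. Call the resulting hypergraph $\calH^+$.

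By construction $\calH^+$ is an $r$-uniform hypergraph on $n+|W|$ vertices with $\Delta(\calH^+)=\Delta$, $\delta(\calH^+)\ge(1-\nu_0)\Delta=(1-\nu_0)\Delta(\calH^+)$, and every codegree at most $\nu_0\Delta=\nu_0\Delta(\calH^+)$. Choosing $|W|$ large enough that $n+|W|\ge n_0$, \Cref{thm:pipp-spencer} applied with parameters $r,\mu,\nu_0$ decomposes $\calH^+$ into at most $(1+\mu)\Delta=(1+\mu)\Delta(\calH)$ matchings; intersecting each of these with $E(\calH)$ yields matchings of $\calH$ that partition $E(\calH)$, as required. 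I expect the only genuine (though still routine) obstacle to be the verification of the second padding step — the existence of an $r$-uniform hypergraph on a large vertex set with a prescribed near-constant degree sequence (all entries at most $\Delta$) and all codegrees below $\tfrac12\nu_0\Delta$; everything else is bookkeeping.
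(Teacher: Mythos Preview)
Your reduction-by-padding strategy is sound and would go through, but the paper achieves the same embedding by a much simpler, fully deterministic trick that avoids the random construction entirely. Rather than introducing dummy vertices and then having to realise a prescribed near-regular degree sequence on them with controlled codegrees, the paper takes $r$ vertex-disjoint copies of $\calH$ and, for every vertex $v$ with $d(v)<\Delta$, adds the single $r$-edge through the $r$ copies of $v$. This raises the minimum degree by one without raising the maximum degree, and the only new codegrees created (between two copies of the same vertex) are equal to $1\le\nu\Delta$. Iterating this at most $\Delta$ times yields a $\Delta$-regular $r$-uniform hypergraph with the same codegree bound, to which \Cref{thm:pipp-spencer} applies directly; restricting the resulting matchings to one copy of $\calH$ finishes.

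So both arguments pad $\calH$ to make it (near-)regular and then invoke \Cref{thm:pipp-spencer}, but your route pays for machinery you don't need: the configuration-model step you flag as the ``only genuine obstacle'' is precisely what the paper's $r$-fold-copy trick sidesteps. The paper's construction is explicit, two lines long, and keeps the codegree bound exactly rather than approximately.
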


		\begin{proof}
			Write $d = \Delta(\calH)$.
			We will embed \(\calH\) in a $d$-regular hypergraph \(\calH\prm\) with the same maximum codegree as \(\calH\).
			Take \(r\) copies of \(\calH\) and add an edge through all copies of \(v\in V(\calH\)) for any \(v\) with \(d(v) < d\).
			This increases the minimum degree of the hypergraph by 1, does not increase the codegree (pairs of copies of \(v\) have codegree 1) and does not increase the maximum degree. 
			By repeating this construction at most \(d-1\) times we obtain a hypergraph \(\calH\prm\) as desired.
			
			Apply Theorem~\ref{thm:pipp-spencer} to $\calH\prm$ to decompose it into at most $(1 + \mu)d$ matchings. This induces a decomposition of $\calH$ into at most $(1 + \mu)d$ matchings, as required.
		\end{proof}

		Our proof will also use the following simple version of the classical K\H{o}v\'ari--S\'os--Tur\'an theorem.

		\begin{theorem}[K\H{o}v\'ari--S\'os--Tur\'an] \label{thm:kst}
			Let \(t\) be a positive integer and \(n\) be sufficiently large.
            Let \(G\) be a graph on \(n\) vertices with no subgraph isomorphic to \(K_{t,t}\).
			Then \(G\) has at most
			\( n^{2-1/t}\)
			edges.
		\end{theorem}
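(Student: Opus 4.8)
\textbf{Proof proposal for the Kővári–Sós–Turán bound (Theorem~\ref{thm:kst}).}
The plan is to run the standard double-counting argument on "cherries" (stars with $t$ leaves), i.e.\ on pairs consisting of a vertex and a $t$-subset of its neighbourhood. Let $G$ have $n$ vertices and $e$ edges, and suppose $G$ contains no copy of $K_{t,t}$. First I would count, over all vertices $v$, the number of $t$-subsets of $N(v)$; this equals $\sum_{v} \binom{d(v)}{t}$. On the other hand, each such pair $(v, S)$ with $S \in \binom{N(v)}{t}$ is a $t$-set $S$ together with a vertex $v$ adjacent to all of $S$; if some $t$-set $S$ were adjacent to $t$ or more vertices, those $t$ vertices together with $S$ would form a $K_{t,t}$ (a copy, allowing $S$ and the common-neighbour set to overlap is fine since a $K_{t,t}$ subgraph only needs the bipartite edges present — though to be safe one can note that in a $K_{t,t}$-free graph even with overlaps one still gets a contradiction, or simply observe $n$ is large enough that this is not an issue). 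Hence each $t$-set $S$ is counted at most $t-1$ times, giving
\begin{equation*}
	\sum_{v \in V(G)} \binom{d(v)}{t} \le (t-1)\binom{n}{t}.
\end{equation*}

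Next I would lower-bound the left-hand side by convexity. The function $x \mapsto \binom{x}{t}$ is convex on $[t-1,\infty)$ and we may extend it to a convex function on $[0,\infty)$ by setting it to $0$ on $[0,t-1]$ (or, more cleanly, use the convex function $\binom{x}{t} := \frac{1}{t!}x(x-1)\cdots(x-t+1)$ for $x \ge t-1$ and handle low-degree vertices by noting they contribute nothing). By Jensen's inequality applied to the average degree $\bar d = 2e/n$,
\begin{equation*}
	\sum_{v} \binom{d(v)}{t} \ge n \binom{\bar d}{t} = n\binom{2e/n}{t}.
\end{equation*}
Combining the two displays yields $n \binom{2e/n}{t} \le (t-1)\binom{n}{t} \le t \cdot \frac{n^t}{t!}$, which after expanding $\binom{2e/n}{t} \ge \frac{1}{t!}\left(\frac{2e}{n} - t\right)^t$ (valid once $2e/n \ge t$) rearranges to $\frac{2e}{n} - t \le t^{1/t} n^{1-1/t}$, hence $2e \le t^{1/t} n^{2-1/t} + tn$.

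Finally I would clean up the constants to reach the clean bound $e \le n^{2-1/t}$ for $n$ large. Since $t$ is fixed and $n$ is sufficiently large, both $t^{1/t} \le 2$ and the lower-order term $tn$ are negligible compared with $n^{2-1/t}$ (as $2 - 1/t \ge 3/2$ for $t \ge 2$, and the $t=1$ case is the trivial statement that a $K_{1,1}$-free, i.e.\ edgeless, graph has at most $n \le n^{2-1}$ edges — wait, $2-1/1 = 1$, and an edgeless graph has $0 \le n$ edges, fine). Concretely, $2e \le 2n^{2-1/t} + tn \le 2n^{2-1/t} + n^{2-1/t} = 3n^{2-1/t} \le 2n^{2-1/t} \cdot$ — rather, it suffices to note $2e \le t^{1/t}n^{2-1/t} + tn < 2n^{2-1/t}$ once $n$ is large (since $tn = o(n^{2-1/t})$ for every fixed $t \ge 1$, using $2-1/t > 1$, with the boundary case $t=1$ being trivially true as noted). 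Dividing by $2$ gives $e \le n^{2-1/t}$.

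I do not expect any serious obstacle here — this is a classical argument. The only place requiring a little care is the convexity step: one must be careful that $x \mapsto \binom{x}{t}$ is only convex for $x \ge t-1$, so vertices of degree less than $t-1$ must be treated separately (they simply contribute $0 = \binom{d(v)}{t}$ to the sum and can only help), and one should either restrict Jensen to the high-degree vertices and bound their average below by $2e/n$ minus a correction, or more simply extend the binomial coefficient to a genuinely convex function on all of $[0,\infty)$. Everything else is routine estimation of binomial coefficients, absorbing lower-order terms into the "$n$ sufficiently large" hypothesis.
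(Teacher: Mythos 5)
The paper does not actually prove Theorem~\ref{thm:kst}; it invokes it as the classical K\H{o}v\'ari--S\'os--Tur\'an theorem, and your double-counting argument (count pairs $(v,S)$ with $S$ a $t$-subset of $N(v)$, bound each $t$-set's multiplicity by $t-1$, then apply convexity to $\sum_v \binom{d(v)}{t}$) is exactly the standard proof, and it is correct in substance. Two small points to tighten: the parenthetical about overlapping sets should not appeal to ``$n$ large'' --- the clean fix is simply that a common neighbour of a $t$-set $S$ can never lie in $S$, since a vertex is not adjacent to itself, so the common neighbourhood is automatically disjoint from $S$ and $t$ common neighbours yield a genuine $K_{t,t}$ on $2t$ distinct vertices; and the case $2e/n < t$, where your expansion of $\binom{2e/n}{t}$ is not valid, should be dispatched explicitly (then $e < tn/2 \le n^{2-1/t}$ for large $n$ when $t \ge 2$, and $t=1$ is the trivial edgeless case, as you note). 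With those two remarks made precise, the proof is complete and matches the classical argument the paper relies on.
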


		Finally, recall that a \emph{projective plane of order \(p\)} is a \((p+1)\)-uniform hypergraph on \(p^2 + p +1\) vertices, also called \emph{points}, with \(p^2 + p +1\) edges, also called \emph{lines}, such that any two lines meet at exactly one point and any two points lie in exactly one common line.
		There exist constructions of projective planes of order $p$ for any prime (see e.g.~\cite[section 12.4]{jukna}).
		
		The main step in the proof of Lemma~\ref{lem:Ktt-decomposition} is the following lemma, which almost decomposes a dense graph into a small number of $K_{t,t}$-forests (whose sizes may differ).
		 
		\begin{lemma}[decomposing into $K_{t,t}$-forests] \label{lem:unequal-Ktt-decomposition}
			Fix \(\alpha,t>0\) and let \(n\) be large.
			Suppose $G$ is a graph on \(n\) vertices with 
            \(e(G) \ge \alpha n^2 \).
            Then $G$ can be decomposed into at most $2n/t$ many $K_{t,t}$-forests and a remainder of at most $ 45n^{2-1/2t}$ edges.
		\end{lemma}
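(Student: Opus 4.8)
The plan is to iteratively peel off $K_{t,t}$-forests using a greedy/hypergraph-matching argument until the leftover is sparse. Concretely, I would build an auxiliary $r$-uniform hypergraph $\calH$ whose vertex set is $V(G)$ (with $r = 2t$) and whose edges are the $2t$-sets that span a copy of $K_{t,t}$ in $G$; a matching in $\calH$ corresponds exactly to a $K_{t,t}$-forest in $G$. The goal is to apply \Cref{cor:pipp-spencer} to decompose (most of) $\calH$ into few matchings. To do that I must control the codegrees: for two vertices $u,v$, the codegree $d_\calH(u,v)$ counts $K_{t,t}$'s containing both $u$ and $v$, which is at most (number of copies of $K_{t,t}$ in $G$ through a fixed vertex); this in turn is bounded using \Cref{thm:kst} applied to neighbourhoods — a fixed vertex lies in $O(n^{2t-1})$ copies while $\Delta(\calH) = \Theta(n^{2t}/n) $-ish when $G$ is dense, so the ratio $d(u,v)/\Delta(\calH)$ is $O(1/n)$, which beats any fixed $\nu$ for $n$ large. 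So \Cref{cor:pipp-spencer} gives a decomposition of $\calH$ into at most $(1+\mu)\Delta(\calH)$ matchings, hence of (the $K_{t,t}$-rich part of) $G$ into that many $K_{t,t}$-forests.

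The snag is that $(1+\mu)\Delta(\calH)$ is far larger than the target $2n/t$ — Pippenger--Spencer gives us \emph{many small} forests, not \emph{few large} ones. So after obtaining this fine decomposition into matchings $N_1, N_2, \ldots$, I would \textbf{merge} them: since the forests are vertex-disjoint unions of $K_{t,t}$'s and there are many of them, I can greedily combine groups of them into larger $K_{t,t}$-forests. The cleanest route is probably: sort the copies of $K_{t,t}$ produced by all these matchings into roughly $2n/t$ groups each forming a valid (i.e.\ vertex-disjoint) $K_{t,t}$-forest. Each $K_{t,t}$ uses $2t$ vertices, so a $K_{t,t}$-forest on $n$ vertices has at most $n/2t$ components; greedily distributing the copies round-robin into $2n/t$ bins, a bin overflows a vertex only when $\ge n/2t$ copies already landed there, and a standard counting/pigeonhole argument (or another application of \Cref{cor:pipp-spencer} at a coarser scale, or an Euler-type/edge-colouring rebalancing as in \Cref{lem:balancing-matching}) shows everything fits into $2n/t$ forests, with the small number of copies that cannot be placed pushed into the remainder.

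The remainder is then bounded as follows: first, before building $\calH$ at all, I would discard from $G$ all edges not lying in \emph{many} copies of $K_{t,t}$, so that the hypergraph is codegree-regular enough; the number of such discarded edges is controlled by \Cref{thm:kst}-type bounds and contributes $O(n^{2-1/2t})$. Second, the copies of $K_{t,t}$ left unplaced during the merging step number at most $O(n)$, each with $t^2$ edges, contributing $O(n)$, which is negligible. Adding the slack from the $\mu$ in Pippenger--Spencer (also $O(n)$ copies' worth) keeps the total under $45 n^{2-1/2t}$ for large $n$. \textbf{The main obstacle} I anticipate is the merging step: making precise that the $\Theta(\Delta(\calH))$ tiny forests coming out of \Cref{cor:pipp-spencer} really can be repacked into only $2n/t$ genuine $K_{t,t}$-forests without a large remainder — this requires either a careful greedy bin-packing argument exploiting that each component occupies only $2t$ of $n$ vertices, or a clever second-level application of a Pippenger--Spencer / rebalancing lemma, and getting the error terms to land below the stated $45n^{2-1/2t}$ bound will need the choice of exponent $1/2t$ (rather than the natural $1/t$ from Kővári--Sós--Turán) to absorb the losses.
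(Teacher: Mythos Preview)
Your plan has a fundamental gap at the very first step. You take $\calH$ to be the hypergraph of \emph{all} $2t$-sets spanning a $K_{t,t}$ in $G$, and then assert that a matching decomposition of $\calH$ yields a decomposition of (the $K_{t,t}$-rich part of) $G$. It does not: two vertex-disjoint copies of $K_{t,t}$ coming from different matchings of $\calH$ can share an edge of $G$, since the same edge $uv$ lies in many $K_{t,t}$'s. So your $(1+\mu)\Delta(\calH)$ forests are not edge-disjoint in $G$, and your ``merging'' step is trying to repack $\Theta(n^{2t})$ heavily overlapping copies into $2n/t$ forests of total capacity $\Theta(n^2/t^2)$ --- there is no way to make this work. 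The natural fix is to first extract an \emph{edge-disjoint} $K_{t,t}$-packing of $G$ (greedily, via K\H{o}v\'ari--S\'os--Tur\'an) and only then build $\calH$ on those copies; this immediately gives $\Delta(\calH)\le n/t$, so Pippenger--Spencer would yield the target $2n/t$ directly with no merging. But now the codegree bound fails: in a greedy packing two vertices $u,v$ on the same side of many copies can have codegree as large as $n/t=\Delta(\calH)$, so \Cref{cor:pipp-spencer} does not apply.

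The paper's key idea, which your proposal is missing entirely, is to first partition $E(G)$ using a projective plane of order $p\approx\sqrt n$: embed $V(G)$ into the points, and for each line $\ell$ let $H_\ell$ be the edges whose endpoints both lie on $\ell$. Since any two points lie on a unique line, the $H_\ell$ decompose $G$ and each has at most $p+1\le 3\sqrt n$ vertices. Greedily pulling edge-disjoint $K_{t,t}$'s out of each $H_\ell$ leaves a remainder of at most $\sum_\ell |V(H_\ell)|^{2-1/t}\le 45 n^{2-1/2t}$ (this is where the exponent $1/2t$ comes from). Crucially, any two vertices $u,v$ lie together in only one $H_\ell$, so the codegree in the resulting hypergraph is at most $|V(H_\ell)|\le 3\sqrt n = o(\Delta(\calH))$, and now \Cref{cor:pipp-spencer} applies cleanly to give at most $2n/t$ matchings. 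The projective plane simultaneously solves the edge-disjointness, the codegree, and the remainder bound; without it your approach does not go through.
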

		\begin{proof}
			Use Bertrand's postulate to pick a prime $p\in [\sqrt{n}, 2\sqrt{n} ]$, and let \(\calP\) be a projective plane of order \(p\) with \(q=p^2+p+1\) points and lines.
			Let \(f\) be an injection from \(V(G)\) to the points of \(\calP\), and for \(j\in [q]\) let \(H_j\) be the subgraph of \(G\) consisting of edges \(uv\) such that \(f(u), f(v)\) both lie in the \(j\)-th line of \(\calP\).
			Observe that \(H_1, \hdots, H_q\) decompose \(G\), since for any \(uv\in E(G)\), a unique line of $\calP$ goes through \(f(u),f(v)\).
            Moreover since each line of \(\calP\) has size \(p+1\), \(\abs{H_j} \le p+1 \le 3\sqrt{n}\).
			
			By the K\H{o}v\'ari--S\'os--Tur\'an theorem (Theorem~\ref{thm:kst}), we can greedily remove copies of \(K_{t,t}\)
            from each graph \(H_j\), until there are at most
			\(
			\abs{H_j}^{2-1/t}
			\)
			edges left. 
			Let $\calk$ be the collection of $K_{t,t}$-copies removed in this process.
			In total, the $K_{t,t}$-copies in $\calK$ cover all but at most
            \[
			\sum_{j=1}^{q}  \abs{H_j}^{2-1/t} \le 45 n^{2-1/2t},
			\]   
			edges of \(G\), using that $q \le 4n + 2\sqrt{n} + 1 \le 5n$.
            
			Let \(\calH\) be the \(2t\)-uniform hypergraph on \(V(G)\) 
            where a set $U$ of $2t$ vertices is an edge if and only if it is the vertex set of a \(K_{t,t}\) copy in $\calk$.
            Notice that the edges incident to a vertex \(v\) split into at most \(n/t\) stars of size \(t\), so each vertex is covered by at most \(n/t\) copies of
            \(K_{t,t}\), i.e.\ $\Delta(\calH) \le n/t$.
            Moreover, since the number of edges covered by $K_{t,t}$-copies in $\calk$ is at least
            \(e(G) -  45n^{2-1/2t}
            \ge \alpha n^2 /2\),
            some vertex $v$ has at least \(\alpha n\) incident edges in the decomposition, and hence, since each \(K_{t,t}\) covers \(t\) edges incident to $v$,
            $v$ lies in at least
            \(\frac{\alpha n}{t}\)
            copies of \(K_{t,t}\) in $\calk$, i.e.\ 
            \(\Delta(\calH) \ge \frac{\alpha n}{t}\).

			For any two vertices \(u,v\), the only copies of \(K_{t,t}\) in $\calk$ that can cover both $u$ and $v$ lie in the graph \(H_j\) corresponding to the unique line through \(f(u), f(v)\).
            The number of \(K_{t,t}\)-copies in $\calk$ that lie in \(H_j\) and contain \(u,v\) is 
            (crudely) at most the number of edges incident to \(u\) in \(H_j\), which is less than \(\abs{H_j}\). 
			Thus, there are at most 
            \(
            \abs{H_j} \le p+1 \le 3 \sqrt{n}
            \)
            copies of \(K_{t,t}\) that contain both \(u,v\).
			It follows that the codegree of \(\calH\) is at most
			\(3 \sqrt{n} = o(\Delta(\calH))\).
			Hence, by Corollary~\ref{cor:pipp-spencer}, we can decompose \(\calH\) into at most 
           $(1+\mu)\Delta(\calH) \leq 2 n/t$  matchings.
			This corresponds to a partition of the above approximate \(K_{t,t}\)-decomposition of $G$ into at most \(2 n/t\) many \(K_{t,t}\)-forests.
		\end{proof}
		
		The next proposition tells us that given $\ell$ parts of arbitrary size, we can further partition each part, to obtain a total of $k$ parts of equal size and a small number of unused elements, provided that $k$ is sufficiently larger than $\ell$. We will apply it to prove Lemma~\ref{lem:Ktt-decomposition}.
		
		\begin{proposition} \label{prop:division}
			Let $\ell, k, s_1, \ldots, s_{\ell}$ be positive integers satisfying $\sum_i s_i \ge k+\ell$. Then there exists a positive integer \(s\) and non-negative integers $\sigma_1, \ldots, 
			\sigma_{\ell}$ such that $\sigma_i s \le s_i$ for $i \in [\ell]$, $\sum_i \sigma_i = k$ and $\sum_i s_i - sk \le \frac{\ell}{k}\sum_i s_i + k$. 
		\end{proposition}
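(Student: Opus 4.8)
The plan is to pick $s$ to be the largest positive integer for which $\sum_i \lfloor s_i/s\rfloor \ge k$, and to take the $\sigma_i$ to be the floors $\lfloor s_i/s\rfloor$ after trimming their sum down to exactly $k$.

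First I would set $S = \sum_i s_i$ and study the function $g(s) := \sum_i \lfloor s_i/s\rfloor$, noting three things: it is non-increasing in $s$; $g(1) = S \ge k+\ell > k$, using the hypothesis $\sum_i s_i \ge k+\ell$ and $\ell \ge 1$; and $g(s) = 0$ once $s > \max_i s_i$. Hence there is a well-defined largest positive integer $s$ with $g(s) \ge k$, and maximality gives $g(s+1) \le k-1$. This $s$ is the one I would use.

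Next I would extract the $\sigma_i$: put $\tau_i := \lfloor s_i/s\rfloor$, so that $\tau_i s \le s_i$ and $\sum_i \tau_i = g(s) \ge k$, and then decrease the $\tau_i$ one unit at a time (never going below $0$) until the sum equals $k$, calling the result $\sigma_i$. This is possible precisely because $\sum_i \tau_i \ge k$, and it yields non-negative integers $\sigma_i$ with $\sum_i \sigma_i = k$ and $\sigma_i s \le \tau_i s \le s_i$, so all the required properties except the final inequality are immediate.

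The only substantive point is the bound $\sum_i s_i - sk \le \frac{\ell}{k}\sum_i s_i + k$, which amounts to showing $s$ is not too small. Applying $\lfloor x\rfloor > x-1$ to each term of $g(s+1)$ gives $g(s+1) > \frac{S}{s+1} - \ell$; combined with $g(s+1) \le k-1$ this yields $\frac{S}{s+1} < k+\ell-1$, hence $s > \frac{S}{k+\ell-1} - 1$. Substituting this in gives $S - sk < \frac{(\ell-1)S}{k+\ell-1} + k$, and the proof concludes with the elementary inequality $\frac{\ell-1}{k+\ell-1} \le \frac{\ell}{k}$ (equivalently $-k \le \ell(\ell-1)$, which always holds). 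I expect this last chain of estimates to be the only place requiring any care; the rest is bookkeeping, and the hypothesis $\sum_i s_i \ge k+\ell$ is exactly what makes $g(1) \ge k$ so that a valid $s$ exists.
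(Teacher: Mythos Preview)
Your proof is correct and follows essentially the same approach as the paper: both set $\sigma_i$ to be (trimmed versions of) $\lfloor s_i/s\rfloor$ for a suitable $s$, and both bound the remainder via $s \gtrsim S/(k+\ell)$. The only difference is cosmetic: the paper takes $s := \lfloor S/(k+\ell)\rfloor$ explicitly and verifies $\sum_i \lfloor s_i/s\rfloor \ge k$, whereas you take the \emph{largest} $s$ with $\sum_i \lfloor s_i/s\rfloor \ge k$ and use maximality (via $g(s+1)\le k-1$) to recover the lower bound on $s$; your choice is at least as large as the paper's, which is why your intermediate constant $\frac{\ell-1}{k+\ell-1}$ is slightly sharper than the paper's $\frac{\ell}{k+\ell}$ before both relax to $\frac{\ell}{k}$.
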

		\begin{proof}
			Let $s := \floor{\sum_i{s_i}/(k+\ell)}$, let $s_i' \in [s_i - s + 1, s_i]$ be divisible by $s$ (notice that there is a unique such $s_i'$) for $i \in [\ell]$. Write $\sigma_i' = s_i'/s$ for $i \in [\ell]$. 
			
			Clearly, $s \le \sum_i s_i / (k + \ell)$. Equivalently,		
				$sk \le \sum_{i = 1}^\ell s_i - s\ell = \sum_i (s_i - s) < \sum_i s_i'$.
			By dividing both sides by $s$, it follows that $\sum_i \sigma_i' \ge k$, which implies that we can pick $\sigma_i \in [0, \sigma_i']$ such that $\sum_i \sigma_i = k$. Note that $\sigma_i s\leq \sigma_i's=s_i'\leq s_i$ as required.
			As $s \ge \sum_i s_i / (k + \ell) - 1$,
			\begin{equation*} 
				\sum_i s_i - ks 
				\le \sum_i s_i - k \left(\frac{\sum_i s_i}{k + \ell} - 1\right)
				= \frac{\ell}{k + \ell}\sum_i s_i + k
				\le \frac{\ell}{k}\sum_i s_i + k.
			\end{equation*}
			Thus $\sigma_1, \ldots, \sigma_{\ell}, s$ satisfy the requirements of the proposition.
		\end{proof}

		We now prove the main lemma of this subsection.
		\begin{proof}[Proof of Lemma~\ref{lem:Ktt-decomposition}]
            Suppose \(e(G) \ge \frac{4n^2}{\sqrt{t}}\), since otherwise we can fit all edges of \(G\) in the remainder.
			Apply Lemma~\ref{lem:unequal-Ktt-decomposition} to $G$, to obtain a decomposition of $G$ into $K_{t,t}$-forests $\KF_1, \ldots, \KF_{\ell}$, with $\ell \le 2n/t$, and a remainder of at most $ 45n^{2-1/2t}$ edges.
			Let $s_i$ be the number of $K_{t,t}$-copies in $\KF_i$. 
            Then 
            $\sum_i s_i$ is the number of edges of $G$ covered by $\bigcup_i \KF_i$, divided by $t^2$.
			Thus
            \[
            \sum_{i=1}^{\ell} s_i 
            \ge \frac{e(G) - 45n^{2-1/2t}}{t^2}
            \ge \frac{e(G)}{2t^2}
            \ge \frac{2n^2}{t^2 \sqrt{t}}
            \ge \frac{n^2}{t^{5/2}} + \frac{2n}{t}
            \ge k + \ell.
            \]
            Therefore, by Proposition~\ref{prop:division}, there exist non-negative integers $\sigma_1, \ldots, \sigma_{\ell}, s$ such that $\sigma_i s \le s_i$ for $i \in [\ell]$, $\sum_i{\sigma_i} = k$ and 
			\begin{equation} \label{eqn:remainder}
                \sum_i (s_i - \sigma_i s)
                =
				\sum_i s_i - sk 
				\le \frac{\ell}{k}\sum_i s_i + k 
				\le \frac{2n/t}{n/\sqrt{t}} \cdot \frac{n^2}{t^2} + \frac{n^2}{t^{5/2}}
				= \frac{3n^2}{t^{5/2}},
			\end{equation}
			where the second inequality follows from the bounds on $\ell, k$ and the fact that \(\sum_i s_i \le n^2/t^2\).
			
			Let $\KF_{i,j}$, with $i \in [\ell]$ and 
            $j \in [\sigma_i]$, 
            consist of $s$ different copies of $K_{t,t}$ from $\KF_i$, such that the $\KF_{i,j}$'s are pairwise vertex-disjoint (this is possible because $\KF_i$ consists of at least $\sigma_i s$ copies of $K_{t,t}$). 
            Since $\sum_i \sigma_i = k$, the collection $(\KF_{i,j})$ consists of exactly $k$ many $K_{t,t}$-forests, which all have the same size. 
            By \eqref{eqn:remainder} and the fact that
            $\KF_1, \ldots, \KF_{\ell}$ cover all but at most \(45 n^{2-1/2t}\)  edges of \(G\),
            the number of edges in $G$ uncovered by $(\KF_{i,j})$ is at most
			\begin{equation*}
				\frac{3n^2}{\sqrt{t}} +  45n^{2-1/2t}  \le
				\frac{4n^2}{\sqrt{t}},
			\end{equation*}
			using that $n$ is large.
			The collection $(\KF_{i,j})$ satisfies the requirements of the lemma.
		\end{proof}

\section{Ascending subgraph decompositions for graphs with linear maximum degree}  \label{sec:bounded-degree}

In this section we prove Lemma~\ref{lem:max-deg-linear}, asserting that graphs with linear maximum degree have ascending subgraph decompositions.

In \Cref{lem:approx-star-forest-decomposition,lem:approx-isomorphic-decomposition,lem:approx-isomorphic-decomposition-stronger}
we will successively get finer edge decompositions of a graph with linear maximum degree.

\begin{lemma}\label{lem:approx-star-forest-decomposition}
    Let \(\eps\in(0,\frac1{10})\) be fixed, set \(c:=10^6\), and let \(m\) be sufficiently large.
Suppose that $G$ is a graph with $\Delta(G) \le cm$ and $e(G)\leq m^2$. Then for every $k \in[\eps m, m]$ there is a decomposition $(H_1, \ldots, H_k, R_1, R_2)$ of $G$ such that  $H_i$ are isomorphic star forests, having components of size at most 
$s= \ceil{5\, \eps^{-1} c }$,  $e(R_1)\leq \eps m^2$ and $|V(R_2)|\leq sm$.
\end{lemma}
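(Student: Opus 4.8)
The plan is to separate $V(G)$ into low- and high-degree vertices, place the edges inside the high-degree part into $R_2$, extract the $k$ star forests from the edges joining the two parts using \Cref{lem:star-forests}, and then absorb what remains either into $R_1$ or into the star forests.

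Fix a threshold $D := \ceil{2m/s}$, put $B := \{v : d_G(v) > D\}$, $A := V(G)\setminus B$, and write $E_{AA}, E_{AB}, E_{BB}$ for the edges of $G$ inside $A$, between $A$ and $B$, and inside $B$. Since $D\abs{B} < \sum_{v\in B} d_G(v) \le 2e(G) \le 2m^2$, we have $\abs{B} < 2m^2/D \le sm$, so setting $R_2 := E_{BB}$ gives $\abs{V(R_2)} \le \abs{B} < sm$. Now apply \Cref{lem:star-forests} to the bipartite graph $E_{AB}$ with $X := A$, $Y := B$, and parameter $d := k$: this is valid because $d_{E_{AB}}(x) \le d_G(x) \le D$ for $x \in A$ and $D < k$ (indeed $D = \ceil{2m/s} < \eps m \le k$ for large $m$, since $s \ge 5\eps^{-1}c$ gives $2/s \le 2\eps/(5c) < \eps/2$). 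We obtain pairwise isomorphic star forests $\SF_1, \ldots, \SF_k$ whose stars have size at most $\Delta(E_{AB})/k \le cm/k \le c/\eps \le s$, plus a remainder $R$ with $\Delta(R) < k$. Let $R' := R \cup E_{AA}$: for $x \in A$, $d_{R'}(x) \le d_G(x) \le D$, while for $y \in B$, $d_{R'}(y) = d_R(y) < k$, so $\Delta(R') < k$; also $e(\SF_i) \le e(E_{AB})/k \le 2m/\eps$ for each $i$.

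If $e(R') \le \eps m^2$ we are done: put $R_1 := R'$ and $H_i := \SF_i$. Otherwise $R'$ must be merged into the $\SF_i$. Here the plan is to decompose $R'$ into matchings (Vizing's theorem), subdivide and rebalance them with \Cref{lem:balancing-matching} into exactly $k$ matchings of equal size — discarding $o(\eps m^2)$ edges into $R_1$ — each of size at least $\eps m/2$ (using $e(R') > \eps m^2$ and $k \le m$), and then to distribute these matchings among $\SF_1, \ldots, \SF_k$, vertex-disjointly and with every forest receiving the same amount, by repeated application of \Cref{lem:large-matching-forest}. This last move needs care about vertex-disjointness — the $\SF_i$ overlap in $G$, so \Cref{lem:large-matching-forest} must be applied so as to respect this — but the numerical hypothesis \eqref{eq:large-matching-forest} is easy to meet: the forests in play always have $O(m/\eps)$ edges and their components have at most $s+1$ vertices, so the right-hand side of \eqref{eq:large-matching-forest} is $O(\sqrt{m}\cdot\poly(1/\eps))$, much less than the matching size $\eps m/2$ once $m$ is large. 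The outcome is $k$ pairwise isomorphic star forests $H_1, \ldots, H_k$ whose components are the original stars (size $\le s$) together with single matching edges (size $1$), and all remaining edges of $G$ sit in $R_1$ (at most $\eps m^2$ edges) or $R_2$.

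The main obstacle is this merging step: to keep the $H_i$ exactly pairwise isomorphic I must hand each forest precisely the same matching, so the matchings have to be equalised first (which is where a negligible remainder gets discarded into $R_1$) and then routed through \Cref{lem:large-matching-forest} rather than distributed ad hoc — a naive distribution fails because a single matching edge may meet many of the non-vertex-disjoint $\SF_i$ at once. The rest — verifying \eqref{eq:large-matching-forest} in the worst case, checking the discarded edges stay within $\eps m^2$, and choosing $D$ to satisfy both $\abs{B}<sm$ and $D<k$ — is routine bookkeeping, all of it relying on the generous choice $s = \ceil{5\eps^{-1}c}$.
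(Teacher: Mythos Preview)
Your overall architecture matches the paper's: split into low/high degree vertices, set $R_2$ to be the graph on the high-degree part, extract isomorphic star forests from the bipartite part via \Cref{lem:star-forests}, and handle the low-degree remainder $R'$ either by dumping it in $R_1$ (if small) or by chopping it into equal matchings and merging these with the star forests. The numerics in your setup are fine.

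The gap is precisely the step you yourself flag as ``the main obstacle''. You have $k$ isomorphic star forests $\SF_1,\dots,\SF_k$ and $k$ equal matchings $M_1,\dots,M_k$, and you want $k$ isomorphic graphs each looking like $\SF + M$ with the matching sitting vertex-disjointly from the star forest. You write that this is to be achieved ``by repeated application of \Cref{lem:large-matching-forest}'', but that lemma does something different: it takes \emph{one} $5$-divisible $H$ and \emph{one} $5$-divisible matching $M$ and splits $H\cup M$ into five copies of $H/5+M/5$. It does not route a matching into a single prescribed forest, and it cannot be iterated across your $k$ pairs because the $\SF_i$ are not vertex-disjoint from one another (they all share the centres in $B$), so no five of them can be bundled into a $5$-divisible graph inside $G$. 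Applying the lemma to each pair $(\SF_i,M_i)$ separately would produce $5k$ pieces, not $k$. So as written the merging step has no mechanism.

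The paper's fix is a small but essential change of parameters: run \Cref{lem:star-forests} with $d=k':=\ceil{k/5}$ rather than $d=k$, producing only $k'$ star forests and (via Vizing on a remainder of maximum degree $<k'$) only $k'$ matchings. After trimming each $\SF_i$ and each $M_i$ to be $5$-divisible, apply \Cref{lem:large-matching-forest} to every single pair $(\SF_i,M_i)$; each application yields five copies of $\SF/5+M/5$, giving $5k'\ge k$ isomorphic graphs in total. Keep $k$ of them and push the surplus (at most four graphs, plus the divisibility trimmings) into $R_1$. This is where the factor $5$ in $s=\ceil{5\eps^{-1}c}$ earns its keep: with threshold $k'$ the star sizes become $\le cm/k'\le 5c/\eps\le s$. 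Your threshold $D=\ceil{2m/s}$ is harmless, but running the star-forest lemma with $d=k$ instead of $d=k/5$ is what leaves you unable to invoke \Cref{lem:large-matching-forest}.
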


\begin{proof} 
	Take $k' := \ceil{k/5}$.
	Let $L$ be the set of vertices in $G$ with degree at least $k'$, and let $S := V(G) - L$.
    Then \(m^2 \ge e(G)\geq \frac12\sum_{v\in L}d(v) \ge k' \abs{L}/2\), so 
    \(\abs{L} \le 2m^2/k' \le 10 m \eps^{-1} \le s m\).

    Apply Lemma~\ref{lem:star-forests} (with $X=S, Y=L, d=k'$) to obtain a decomposition
    $(\SF_1, \ldots, \SF_{k'}, R_0)$ of 
    \(G[S, L]\)
    where the $\SF_i$'s are isomorphic star forests with at most \(\abs{L} \le sm\) components, stars of size at most 
    $cm/k' \le 5 c \eps^{-1} \le s$, 
	and $R_0$ has maximum degree less than $k'$.
    Set \(R:= R_0 \cup G[S]\),
    and observe that the maximum degree of \(R\) is still less than \(k'\) (i.e.\ $\Delta(R)\leq k'-1$), since adding the edges of \(G[S]\) can only affect the degree of vertices in \(S\), which even in $G$ have degree at most $k'-1$.
    Let $\SF$ be the isomorphism class of the $\SF_i$'s. We assume that $\SF$ is $5$-divisible, by removing up to four components of each size from each $\SF_i$ (these components will be placed in $R_1$ at the end of the proof). 

	If $e(R) \le \eps^2 m^2$, split each $\SF_i$ into five copies of $\SF/5$, denoting the collection of copies thus obtained by $H_1, \ldots, H_{5k'}$. 
	Otherwise, using \Cref{lem:large-matching-forest}, we will define $H_1, \ldots, H_{5k'}$ to be a collection of graphs consisting of the union of \(\SF_i\) and a matching in \(R\).
    We construct this collection as follows.
    Apply Vizing's theorem and Lemma~\ref{lem:balancing-matching} to decompose $R$ into almost equal matchings $M_1, \ldots, M_{k'}$. By removing up to five edges from each $M_i$, we may assume that the $M_i$'s have the same size and are $5$-divisible. Denote the isomorphism class of the $M_i$'s by $M$, and notice that $e(M) \ge \eps^2m^2 / k' - 5 \ge m^{2/3}$. 
    Next, in order to apply \Cref{lem:large-matching-forest} to obtain a decomposition of each \( M_i\cup \SF_i \), we verify~\eqref{eq:large-matching-forest}. Recalling that the number of components in \(\SF\) is at most \(sm\) and each has size at most \(s\), we see
    the right hand side of~\eqref{eq:large-matching-forest} 
    is upper bounded by
    \(
    \sqrt{s^3 m} = o(m^{2/3}).
    \)
    Hence \Cref{lem:large-matching-forest} implies that each graph $M_i\cup \SF_i$ can be decomposed into five copies of $M / 5 + \SF / 5$. 
    Using this decomposition for each \(M_i \cup \SF_i, i\in [k']\),
    denote the collection of copies of \(M / 5 + \SF / 5\) obtained by $H_1, \ldots, H_{5k'}$. 

	Let $R_2 := G[L]$, so \(\abs{V(R_2)} \le sm\) by the calculation at the beginning of the proof.
	Let $R_1$ be the subgraph of $G - G[L]$ spanned by the edges uncovered by 
	$H_1, \ldots, H_k$.
	Then $R_1$ consists of the (at most $5k'-k \leq 4$) graphs $H_{k+1}, \ldots, H_{5k'}$, which in total cover at most 
	$4m^2/k \le 4m/\eps$ edges;
    any edges we removed from \(\cup_i \SF_i\) and \(\cup_i M_i\) for divisibility purposes; and all edges of $R$ if $e(R) \le \eps^2m^2$.
    For the second contribution, since the components of \(\SF\) are stars of size at most \(s\),
    there are at most $s$ potential component sizes, each contributing at most $s$ deleted edges. Hence we removed at most \(4s^2\) edges from each \(SF_i\), so the total contribution to \(R_1\) is \(4 s^2 k'\); and we removed at most \(5k'\) edges
    from \(\cup_i M_i\).
    Altogether, 
    $e(R_1) \le \eps m^2$.
	Therefore 
	$(H_1, \ldots, H_k, R_1, R_2)$ is a decomposition of $G$ satisfying the required properties.
\end{proof}

Next, we almost decompose the part with linearly many vertices above (namely $R_2$ in \Cref{lem:approx-star-forest-decomposition}) and combine them with the isomorphic graphs obtained in the previous step, to decompose the whole graph into isomorphic graphs with some nice properties and a remainder with few edges.

\begin{lemma}\label{lem:approx-isomorphic-decomposition}
    Fix constants \(\eps\in (0,\frac{1}{10})\) and  \(r\in \mathbb N\), set
    \(c:=10^6\), 
    \(t:=\ceil{\eps^{-12} c^6}^2\) and suppose \(m\) is sufficiently large.
	Let $G$ be a graph satisfying 
    $\Delta(G) \leq cm$ and $e(G)\leq m^2$. 
    Then for any  $k \in [\eps m, m]$ there is a decomposition 
    $(H_1, \ldots, H_{k}, R)$ 
    of \(G\) such that: the $H_i$'s are isomorphic, $r$-divisible, and their components have size at most $t$; and $e(R)\leq \eps m^2$.
\end{lemma}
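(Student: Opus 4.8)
The plan is to feed the output of \Cref{lem:approx-star-forest-decomposition} through a $K_{t,t}$-forest decomposition of the dense leftover piece $R_2 = G[L]$, and then glue everything together. First I would apply \Cref{lem:approx-star-forest-decomposition} with a suitable $\eps'$ (a small polynomial in $\eps$, to be chosen so that all error terms at the end are at most $\eps m^2$) and with the same $k$. This yields a decomposition $(\SF_1,\ldots,\SF_k,R_1,R_2)$ where the $\SF_i$ are isomorphic star forests with components of size at most $s = \ceil{5\eps'^{-1}c}$, $e(R_1)\le \eps' m^2$, and $|V(R_2)|\le sm$. The point of $R_2$ is that it has a \emph{linear} number of vertices, so it is ``dense'' in the sense required by \Cref{lem:Ktt-decomposition}: on its vertex set of size $n := |V(R_2)| \le sm$ we have $\Delta(R_2)\le \Delta(G)\le cm = O(n)$, hence $e(R_2) = \Theta(n^2)$ (or we simply dump $R_2$ into $R$ if it has few edges). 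I would then apply \Cref{lem:Ktt-decomposition} to $R_2$ with the chosen $t$, producing $k$ isomorphic $K_{t,t}$-forests $\KF_1,\ldots,\KF_k$ plus a remainder of at most $4n^2/\sqrt t = O(m^2/\sqrt t)$ edges, which goes into $R$; here the choice $t = \ceil{\eps^{-12}c^6}^2$ is large enough that $O(m^2/\sqrt t)\le \eps m^2/2$, and one checks the hypothesis $n/\sqrt t \le k \le n^2/t^{5/2}$ holds for $k\in[\eps m,m]$ and $m$ large.

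Next I would \emph{combine} $\SF_i$ with $\KF_i$: set $H_i := \SF_i \cup \KF_i$, choosing the copies inside \Cref{lem:Ktt-decomposition} (or reindexing) so that the forests $\KF_i$ live on the vertex set $L$ while each $\SF_i$ has all its leaves in $S = V(G)\setminus L$ — so the two are essentially vertex-disjoint except possibly at the star-centres, which lie in $L$. Since both $\SF_i$ and $\KF_i$ are forests of bounded-size components (size $\le s$ and $\le 2t$ respectively), $H_i$ is again a disjoint union of bounded-size components; after possibly merging a star centred at $v\in L$ with the $K_{t,t}$-component of $\KF_i$ through $v$, each component of $H_i$ still has size at most $s + 2t \le t$ once $t$ is chosen large enough relative to $s$ (which it is, since $s = O(\eps^{-1}c)$ and $t = \Omega(\eps^{-12}c^6)$). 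Because the $\SF_i$ are pairwise isomorphic, the $\KF_i$ are pairwise isomorphic, and the gluing pattern is the same for every $i$, the resulting $H_i$ are pairwise isomorphic. To make them $r$-divisible, I delete up to $r-1$ components of each isomorphism type from each $H_i$ and throw them into $R$; since there are at most $t$ possible component sizes hence a bounded number of isomorphism types of components (bounded in terms of $t$, hence a constant), this removes only $O_{r,t}(m)$ edges total, which is $o(m^2)$. Finally $R := R_1 \cup (\text{the }K_{t,t}\text{-remainder}) \cup (\text{deleted components})$, and $e(R) \le \eps' m^2 + \eps m^2/2 + o(m^2) \le \eps m^2$.

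The main obstacle is the bookkeeping around the interface between the star forests and the $K_{t,t}$-forests: one must arrange that gluing $\SF_i$ to $\KF_i$ does not create large components and, crucially, that the glued graphs are genuinely isomorphic to one another rather than merely ``the same up to which vertex played which role.'' This requires being careful that \Cref{lem:Ktt-decomposition} produces \emph{isomorphic} forests (it does) and that one can consistently choose, for each $i$, which $\KF$-component hangs off each star-centre $v\in L$ — the cleanest route is to observe that the star attached at a given centre $v$ has the same size in every $\SF_i$ (since the $\SF_i$ are isomorphic via an isomorphism fixing $L$ pointwise, which is how \Cref{lem:star-forests} delivers them), so the merge at $v$ produces the same component shape for every $i$. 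The remaining verifications — that all the additive error terms ($\eps' m^2$ from $R_1$, $O(m^2/\sqrt t)$ from the $K_{t,t}$-remainder, $O_{r,t}(m)$ from divisibility trimming) sum to at most $\eps m^2$, and that the size and range constraints of \Cref{lem:Ktt-decomposition} are met — are routine given the stated (deliberately generous) choices of $t$ and $\eps'$.
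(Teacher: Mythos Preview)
Your overall architecture matches the paper's: apply \Cref{lem:approx-star-forest-decomposition}, then \Cref{lem:Ktt-decomposition} to the dense remainder $R_2$, then combine. But the step you flag as ``the main obstacle'' is a genuine gap, and your proposed resolution does not work.

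You want to set $H_i := \SF_i \cup \KF_i$ and argue the resulting graphs are isomorphic because ``the star attached at a given centre $v$ has the same size in every $\SF_i$.'' That is true, but it is not enough. The $\KF_i$ are isomorphic only as \emph{abstract} graphs; there is no reason for them to be isomorphic via a map fixing $L$ pointwise. So a $K_{t',t'}$-component of $\KF_1$ may sit on vertices $v_1,\ldots,v_{2t'}\in L$ whose attached star sizes are (say) $3,3,5,\ldots$, while a $K_{t',t'}$-component of $\KF_2$ sits on vertices with star sizes $4,7,7,\ldots$; the merged components then have different shapes, and $H_1\not\cong H_2$. Worse, a given star centre $v$ may lie in a $K_{t',t'}$-component of $\KF_1$ but in no component of $\KF_2$ at all (the $\KF_i$ need not cover $L$). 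Your ``consistent choice of which $\KF$-component hangs off each centre'' is therefore not available: the $\KF_i$ are a fixed decomposition of $R_2$, and you cannot rearrange them to synchronise their incidence pattern with the star centres.

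The paper sidesteps this entirely by enforcing \emph{vertex-disjointness} rather than merging. The key observation is that $|V(\KF)| \le 2e(\KF)/t' \le 2m^2/(kt')$ is very small --- at most $\eps^2 m/s^2$ with the right parameters. So for each star size $x$ one fixes a number $c_x'\le c_x$ (divisible by $r$) with $c_x - c_x' \ge |V(\KF)|$ whenever $c_x$ is not already tiny, and defines $\SF'$ to have exactly $c_x'$ stars of size $x$. Then for each $i$ one can realise a copy of $\SF'$ inside $\SF_i$ by deleting precisely those stars whose centres land in $V(\KF_i)$ (there are at most $|V(\KF_i)|\le c_x-c_x'$ of them of each size), leaving $\SF_i'$ and $\KF_i$ genuinely vertex-disjoint. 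Now $H_i := \SF_i' + \KF_i$ is a disjoint union of two graphs each of a fixed isomorphism type, hence the $H_i$ are trivially isomorphic and $r$-divisible, with components of size at most $\max(s,(t')^2)\le t$. The cost of the trimming is at most $2\eps^2 m/s^2$ stars of each of the $\le s$ sizes in each of the $k\le m$ forests, i.e.\ $O(\eps^2 m^2)$ edges, which fits in $R$.
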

\begin{proof}
Let $t' := \sqrt{t}$ and \(s:= \ceil{5\, \eps^{-2} c }\), and notice that we have $t'\geq \eps^{-12}c^6\geq cs^4\eps^{-4}$. Moreover as $r, c, t, \eps$ are fixed and $m$ is large enough, $m$ is larger than any fixed function of $r, c, t, \eps$.

Apply Lemma~\ref{lem:approx-star-forest-decomposition} 
    (with $\eps_{\ref{lem:approx-star-forest-decomposition}} = \eps^2$) to obtain a decomposition 
    $(\SF_1, \ldots, \SF_k, R_1, R_2)$, 
    where the $\SF_i$'s are isomorphic star forests whose stars have size at most $s$;
    $e(R_1) \le \eps^2 m^2$; 
    and $\abs{V(R_2)} \le sm$. Let $\SF$ be the isomorphism class of the $\SF_i$'s.
    
    If 
    \(
    \abs{V(R_2)} \le t^{3/4}\, \sqrt{m}
    \), 
    then 
    \(
    e(R_2) \le \abs{V(R_2)}^2 
    \le t^{3/2}m
    \le \eps^2 m^2.
    \)
    Let \(H\) be the subgraph of \(\SF\)
    that results from  removing at most \(r-1\) stars of each size from \(\SF\), so that \(H\) is \(r\)-divisible.
    Let \(H_i\) be a subgraph of \(\SF_i\) with
    \(H_i \cong H\), and 
    \(
    R_3 = \bigcup_i \SF_i \setminus H_i
    \).
    Since there are at most \(s\) sizes of stars in \(\SF\) and the largest star has size $s$,
    \(e(R_3) \le k s^2 r\leq ms^2r\leq \eps^2 m^2\).
    Then \(R = R_1 \cup R_2 \cup R_3\) consists of the edges of \(G\) uncovered by \(H_1,\hdots, H_k\), and has size at most
    \(
    3 \eps^2 m^2  \le \eps m^2.
    \)
    Hence, since \(H\) consists of stars of size at most \(s\le t\), the decomposition 
    \((H_1,\hdots, H_k,R)\) 
    satisfies the requirements of the lemma.
    
    Thus for the remainder of the proof we  assume
    \(
    \abs{V(R_2)} \ge t^{3/4} \sqrt{m}.
    \)
    In this case we will first apply \Cref{lem:Ktt-decomposition} to decompose \(R_2\) into \(K_{t',t'}\)-forests.
    Then, by removing a few components from this decomposition and from each \(\SF_i\), we can combine each \(K_{t',t'}\)-forest with a \(\SF_i\) to obtain a decomposition as in the lemma.

	We apply \Cref{lem:Ktt-decomposition} (with $G_{\ref{lem:Ktt-decomposition}}=R_2$, $t_{\ref{lem:Ktt-decomposition}}=t'$, $k_{\ref{lem:Ktt-decomposition}}=k$, $n_{\ref{lem:Ktt-decomposition}}=|V(R_2)|\in [t^{3/4}\sqrt m, sm]$) to decompose \(R_2\) into 
    \(k\) \(K_{t',t'}\)-forests. This can be done because our $k$ satisfies the conditions of the lemma. Indeed, 
    the lower bound on 
    \(\abs{V(R_2)} \) implies
    \(
    \frac{\abs{V(R_2)}^2}{(t')^{5/2}}
	\ge \frac{t^{3/2}m}{t^{5/4}}
    \ge m \ge k
    \)
    and we also have
    \(
    k\ge \eps m \ge \frac{sm}{\sqrt{t'}} \ge \frac{\abs{V(R_2)}}{\sqrt{t'}}
    \).
    The lemma gives a decomposition
    $(\KF_1, \ldots, \KF_k, R_3)$ of $R_2$ such that: the $\KF_i$'s are isomorphic $K_{t',t'}$-forests,
    and 
    \(
    e(R_3)
    \le 
    \frac{4\abs{V(R_2)}^2}{\sqrt{t'}}
    \le \frac{4s^2 m^2}{\sqrt{t'}}
    \le \eps^2 m^2
    \) (using that $c=10^6$ and $t'\geq cs^4\eps^{-4}$).
    Let $\KF$ be the isomorphism class of the $\KF_i$'s. By moving fewer than $r$ components from each $\KF_i$ to \(R_3\), we may assume that $\KF$ is $r$-divisible;
    this increases the size of \(R_3\) by at most
    \(r (t')^2 k = r t k \le rtm \leq \eps^2 m^2\), so \(R_3\) has a total size at most \(2\eps^2 m^2\).
	
	For $x \in [s]$, let $c_x$ be the number of components of size $x$ in $\SF$.
    Notice that 
    \begin{equation}\label{eq:VKFbound}
		\abs{V(\KF)} = 2t'\frac{e(\KF)}{(t')^2}=\frac{2e(\KF)}{t'} 
		\le \frac{2m^2}{kt'}
		\le \frac{2m}{\eps t'}
		\le \frac{\eps^2m}{s^2}.
    \end{equation}
    If \(c_x< \eps^2m /s^2\), set \(c_x' := 0\).
    Otherwise, take $c_x'$ to be the largest integer which is divisible by $r$ and is at most \(c_x - \eps^2m /s^2\).
    Observe $c_x' \ge c_x - 2\eps^2m /s^2$,
    since for \(m\) sufficiently large, 
    \(\eps^2 m/s^2 \ge r\) and hence there is a multiple of \(r\) in the interval 
    \([c_x - 2\eps^2m /s^2, c_x - \eps^2m /s^2]\).
    Moreover, if \(c_x \ge \eps^2m /s^2\), from the above and (\ref{eq:VKFbound}) it follows that 
    \(c_x - c_x' \ge \abs{V(\KF)}\).
    Let $\SF'$ be the star forest which has $c_x'$ components of size $x$, 
    for each $x \in [s]$ (and has no stars larger than $s$), and let $H = \SF' + \KF$.
    Observe that since \(c_x'\) is a multiple of \(r\),  \(\SF'\) is  \(r\)-divisible, and so \(H\) is also \(r\)-divisible.

	Notice that, for every $i \in [k]$, the union $\SF_i \cup \KF_i$ contains a copy of $H$. Indeed, for every $x \in [s]$, remove $c_x - c_x'$ components of size $x$ from $\SF_i$, so that all such components that intersect $\KF_i$ are removed; this is possible because $c_x - c_x'$ is either the number of components of $\SF_i$ of size $x$, or at least as large as the number of vertices in $\KF_i$.
    Let $H_i$ be a copy of $H$ in $\SF_i \cup \KF_i$, for $i \in [k]$, and let $R$ be the graph spanned by edges not covered by the $H_i$'s.
	
	Then the $H_i$'s are isomorphic, \(r\)-divisible, and have components of size at most $t$, since in \(\KF_i\) each component has size  \(t=(t')^2\) and in \(\SF_i\) they have size at most \(s\le t\).
    The edges in $R$ consist of: edges of $R_1 \cup R_3$, of which there are at most $3\eps^2m^2$; 
    and edges in $\SF_i - H_i$.
    For the latter, notice that the number of components of size \(x \in [s]\) we removed from \(\SF\) is 
    \(c_x - c_x' \le  2\eps^2 m s^{-2} \),
    so the number of edges of \(\bigcup_{i=1}^k (\SF_i - H_i)\) is at most
    $ k\cdot s^2 \cdot 2\eps^2m s^{-2}=2\eps^2 mk \le 2\eps^2 m^2$. 
    Altogether, 
    $e(R) \le 5\eps^2 m^2 \le \eps m^2$.
    Thus $(H_1, \ldots, H_k, R)$ satisfies the requirements of the lemma.
\end{proof}

In the next lemma we further decompose the small remainder from the previous step into stars, obtaining a decomposition into isomorphic graphs, an ascending sequence of stars, and a remainder of small maximum degree, with the stars and isomorphic graphs interacting nicely.

To state the next lemma we need the following two definitions.
We say that a matching $M\subseteq G$ is \emph{isolated} if it touches no other edges of $G$. 
We say that an ordered  pair of graphs $(H_{i}, H_{i+1})$ is \emph{ascending} if  $H_i\cong H_{i+1}$ or $H_i\cong H_{i+1}\setminus e$ for some edge $e$ in $H_{i+1}$. 
We say that a sequence of graphs $(H_1, \dots, H_t)$ is ascending if $(H_i,H_{i+1})$ is ascending for each $i \in [t-1]$. In particular, all graphs in an ascending sequence can be isomorphic.

\begin{lemma}\label{lem:approx-isomorphic-decomposition-stronger}
Let \(\eps\in (0,\frac{1}{10})\), \(c:=10^6\),  \(m\) be large and let \(k\in [\eps m, m]\).
	Suppose that $G$ is a graph with $\Delta(G)\leq cm$ and $0.2m^2\leq e(G)\leq m^2$. Then there is a decomposition $(H_1, \ldots, H_k, S_1, \ldots, S_k, R)$ of $G$, such that: $H_1, \ldots, H_k$ are isomorphic, $2$-divisible, and contain isolated matchings of size at least $m/200c$; $(S_1, \dots, S_k)$ is an ascending sequence of stars with $e(S_i) = 0$ for $i \in [k - \eps m]$; $R$ satisfies $e(R)\leq \eps m^2$ and $\Delta(R)\leq \eps m$; and $H_i, S_i$ are vertex-disjoint for each $i\in [k]$.
\end{lemma}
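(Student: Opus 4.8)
The plan is to build on \Cref{lem:approx-isomorphic-decomposition} and then surgically extract a large isolated matching from each isomorphic piece while pushing the leftover edges into an ascending sequence of stars and a low-degree remainder. First I would apply \Cref{lem:approx-isomorphic-decomposition} with a suitable $r$ — specifically $r = 4$ (or a small multiple thereof, chosen so that the later divisibility reductions go through) and $\eps_{\ref{lem:approx-isomorphic-decomposition}}$ a small constant multiple of $\eps$ — to obtain a decomposition $(H_1^0, \ldots, H_k^0, R^0)$ where the $H_i^0$ are isomorphic, $r$-divisible, have components of size at most $t$, and $e(R^0) \le \eps^2 m^2$ say. Because $e(G) \ge 0.2 m^2$ and $e(R^0)$ is negligibly small, $\sum_i e(H_i^0) \ge 0.1 m^2$, so the common size $e(H^0) := e(H_i^0)$ is at least $0.1 m^2 / k \ge 0.1 m / c \cdot$ (something), in particular $\Omega(m)$. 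The key structural fact I need is that each $H_i^0$, being a disjoint union of bounded-size components with $\Omega(m)$ edges in total, contains an \emph{isolated} matching of size $\ge e(H^0)/(2t) \ge m/(200c)$: pick one edge from each of at least $e(H^0)/t$ components (a component of size $\le t$ has $\le t$ edges, so there are $\ge e(H^0)/t$ components, each contributing an edge, and since the components are vertex-disjoint these edges form an isolated matching). Since all $H_i^0$ are isomorphic we can fix the \emph{same} isomorphism-invariant choice of isolated matching in each.

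Next I would deal with the remainder $R^0$, which has $e(R^0) \le \eps^2 m^2$ but possibly large maximum degree, and convert its high-degree part into the ascending star sequence. Repeatedly peel off the star at a vertex of degree $> \eps m$: as in the proof of \Cref{thm:main} in \Cref{subsec:reduction}, there are at most $O(e(R^0)/(\eps m)) = O(\eps m)$ such vertices, so we extract at most $\eps m$ stars $T_1, \ldots, T_p$ with $p \le \eps m$, leaving a graph $R$ with $\Delta(R) \le \eps m$ and $e(R) \le e(R^0) \le \eps m^2$. To turn $T_1, \ldots, T_p$ into an ascending sequence of stars $S_1, \ldots, S_k$ with $e(S_i) = 0$ for $i \le k - \eps m$, I would sort the $T_j$ by size and pad with empty stars: set $S_{k - p + j}$ to be the $j$-th smallest of the $T_j$ for $j \in [p]$, and $S_i = \emptyset$ otherwise; an ordered pair of stars $(S_i, S_{i+1})$ with $e(S_i) \le e(S_{i+1})$ is automatically ascending since a star is a subgraph of any larger star, and $p \le \eps m$ guarantees the empty-star condition. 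There is a mild wrinkle: $R$ as extracted may not have $e(R) \le \eps m^2$ if $e(R^0)$ were close to $\eps^2 m^2$ — but it is, so $e(R) \le \eps^2 m^2 \le \eps m^2$ comfortably, and similarly the few stars we deleted from the $H_i^0$'s for $2$-divisibility (we only need $2 \mid r$, which holds if $r=4$) cost only $O(m)$ edges absorbed into $R$.

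The last ingredient is the vertex-disjointness requirement: each $H_i$ must be vertex-disjoint from its paired star $S_i$. Here I would \emph{not} try to control disjointness during the extraction, but instead do a final rearrangement using the divisibility of the $H_i^0$. Each $H_i^0$ is (at least) $4$-divisible, so write $H_i^0$ as four vertex-disjoint copies of $H^0/4$; an isolated matching of size $\ge m/(200c)$ can be found inside each quarter (again by the component-counting argument, since $H^0/4$ still has $\Omega(m)$ edges). Now for a given $i$, the star $S_i$ has at most $|V(S_i)| \le 2 e(S_i) \le 2 e(R^0) = o(m^2)$ vertices but more usefully its center plus leaves; since $S_i$ intersects at most... actually the clean move is the one used in \Cref{lem:combine-star-with-forest}: among the four quarter-copies of $H^0/4$, the center of $S_i$ lies in at most one, and among the remaining three, ordering them by how many leaves of $S_i$ they contain forces the least one to contain at most $|V(S_i) \cap (H_i^0 \text{-part})|/3$ of them — but to actually get disjointness we want a quarter missing \emph{all} of $V(S_i)$, which is not guaranteed by three quarters alone. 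The honest fix is to take $r$ somewhat larger, say $r = 200c$ or so: splitting $H_i^0$ into $r$ vertex-disjoint copies of $H^0/r$, the set $V(S_i)$ meets at most $|V(S_i)|$ of them (each vertex of $S_i$ lies in one copy), and since $|V(S_i)| \le 2e(S_i)$ while there are $r$ copies each of $\ge e(H^0)/r \ge \Omega(m/r)$ edges, provided $r$ is chosen with room to spare at least one copy avoids $V(S_i)$ entirely — wait, that needs $|V(S_i)| < r$, which fails.

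So the genuinely correct approach, and the step I expect to be the main obstacle, is to \emph{swap the roles}: rather than finding a quarter of $H_i$ disjoint from the pre-chosen $S_i$, I would choose the pairing and the star-to-vertex-set assignment \emph{greedily together}. Process $i = k, k-1, \ldots$ downward; at each step the stars $S_i$ are small ($e(S_i) = o(m^2/k) = o(m)$ on average, and in any case the total over all $i$ is $e(R^0) = o(m^2)$), so the total number of vertices ever forbidden is $o(m^2)$ — this is too much. The actual resolution in the literature for exactly this kind of lemma is: because each $H_i^0$ is $r$-divisible with $r$ a large constant, decompose it into $r$ disjoint isomorphic parts; $S_i$ is a single star, so removing its center-part still leaves $r-1 \ge 4$ parts, and we only need ONE of those $r-1$ parts to contain few enough leaves of $S_i$ that we can delete those conflicting leaf-components. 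Concretely, use \Cref{lem:combine-star-with-forest} essentially verbatim with $H = H_i^0$ (which is $4$-divisible) and $S = $ the portion of the relevant star, to rewrite $H_i^0 \cup (\text{its share of }R)$ as a $4$-divisible isomorphic graph plus stars, with each quarter vertex-disjoint from its star. I would therefore \textbf{interleave} the star-extraction with the divisibility splitting: take $r$ in \Cref{lem:approx-isomorphic-decomposition} to be $4 \cdot r'$ where $r'$ is whatever the next lemma needs, extract the high-degree stars to form $T_1, \ldots, T_p$, then for each $i$ apply the \Cref{lem:combine-star-with-forest}-style surgery to make $H_i$'s final incarnation disjoint from $S_i$ while preserving $2$-divisibility (indeed $r'$-divisibility) and the isolated matching of size $\ge m/(200c)$ (which survives because the matching lives in bounded components and we remove only $O(1)$ of them per star, or we simply re-find it afterwards since $\Omega(m)$ edges remain). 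The bookkeeping that $e(R) \le \eps m^2$, $\Delta(R) \le \eps m$, the ascending-star condition, and $2$-divisibility all hold simultaneously is routine given the slack; the one subtle point — and the main obstacle — is verifying that the per-$i$ surgery which achieves $H_i \cap S_i = \emptyset$ does not destroy isomorphism \emph{across different $i$} (it could, if different stars force different components to be deleted), which is handled by noting that we may first make all the $S_i$ have a common "shape" up to the isomorphism (pad them, via the empty stars and the freedom in choosing which leaves go where) so that the deleted sub-forest of $H^0$ is the same isomorphism type for every $i$, restoring the common isomorphism class of the final $H_1, \ldots, H_k$.
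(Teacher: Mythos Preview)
Your approach has a genuine gap in the extraction of the isolated matching. Picking one edge from each component of $H_i^0$ gives a matching, but \emph{not an isolated one}: by definition an isolated matching touches no other edges of the ambient graph, so an edge chosen from a component with two or more edges fails this immediately. The components coming out of \Cref{lem:approx-isomorphic-decomposition} may well all be copies of $K_{t',t'}$ (this is exactly what happens when $G$ is dense), in which case $H_i^0$ contains no isolated edge at all. Even setting the conceptual point aside, the numbers do not work: the component-size bound from \Cref{lem:approx-isomorphic-decomposition} is $t = \lceil \eps_{\ref{lem:approx-isomorphic-decomposition}}^{-12} c^6\rceil^2$, an enormous constant, while with $k$ as large as $m$ one only has $e(H^0) \ge 0.1m$, so the number of components can be as small as $0.1m/t$, which is nowhere near $m/(200c)$.

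The paper resolves this by extracting the matchings \emph{before} invoking \Cref{lem:approx-isomorphic-decomposition}: Vizing plus \Cref{lem:balancing-matching} carve off $k' = \lceil k/20\rceil$ isomorphic $40$-divisible matchings $M_i$ of size $\ge m/10c$ from $G$, then \Cref{lem:approx-isomorphic-decomposition} (with $r=40$) is applied to the rest, and \Cref{lem:large-matching-forest} is used to attach $M_i/5$ as a vertex-disjoint (hence isolated) sub-matching to each fifth of $H_i$, yielding $5k'$ isomorphic $8$-divisible graphs. For the remaining steps---turning $R_1$ into an ascending star sequence and achieving $H_i$--$S_i$ disjointness via \Cref{lem:combine-star-with-forest}---your instincts are correct; in particular your worry about losing isomorphism across $i$ is a non-issue, since \Cref{lem:combine-star-with-forest} always returns $H_i^j \cong H'/4$ regardless of the star, and only the output stars $S_i^j$ vary (which is fine, they merely need to be ascending after sorting). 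Note also the factor-of-$20$ bookkeeping: the paper applies \Cref{lem:approx-isomorphic-decomposition} with $k'$ rather than $k$ precisely because the subsequent $5$-way and $4$-way splits multiply the piece count by $20$.
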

\begin{proof}
	Let \(t:=\ceil{\eps^{-48}c^6}^2\) and $k' := \ceil{k/20}$. By choosing $m$ sufficiently large we can assume that it is larger than any fixed function of $c, t, \eps$.
	By Vizing's theorem, $G$ can be decomposed into $cm+1$ matchings, which we may assume are almost equal by Lemma~\ref{lem:balancing-matching}. 
    Then, by removing at most \(40\) edges from each such matching,
    there are edge-disjoint isomorphic $40$-divisible matchings $M_1, \ldots, M_{k'}$ of size at least $\frac{e(G) - 40(cm+1)}{cm+1} \ge m/10c$.

	Let $G'=G - \bigcup M_i$, and note \(\Delta(G') \le cm\).
	Apply Lemma~\ref{lem:approx-isomorphic-decomposition} (with $\eps_{\ref{lem:approx-isomorphic-decomposition}} = \eps^4$, $r_{\ref{lem:approx-isomorphic-decomposition}}=40$, $m_{\ref{lem:approx-isomorphic-decomposition}}=m$, $k_{\ref{lem:approx-isomorphic-decomposition}}=k'$) to obtain a decomposition $(H_1, \ldots, H_{k'}, R_1)$ of \(G'\), where the $H_i$'s are isomorphic, $40$-divisible, their components have size at most $t$, and $e(R_1) \le \eps^4 m^2$. Let $H$ and $M$ be the isomorphism classes of $H_i$ and $M_i$, respectively.

	Next we will use \Cref{lem:large-matching-forest} to decompose \(M\cup H\).
    Let \(K_1, \hdots, K_L\) be an enumeration of the components of \(H\).
    Then
    \(\sum_i \abs{K_i}^2  \le t \cdot \sum_i \abs{K_i} = t \abs{V(H)}\),
    and
    \(\abs{V(H)} \le e(H) \le e(G')/k' \le 20 \eps^{-1} m\), so the right hand side of \eqref{eq:large-matching-forest} is bounded by $O(\sqrt{m})$.
    On the other hand,
    $e(M)/5 \ge m/50c$,
	so the left hand side of \eqref{eq:large-matching-forest} is $\Omega(m)$, and hence \Cref{lem:large-matching-forest} applies and gives a decomposition of  $M\cup H$ into five copies of $H' = M/5 + H/5$.
    Let $H_1', \dots, H_{5k'}' \cong H'$ be the subgraphs of \(H_1\cup M_1, \dots, H_{k'} \cup M_{k'}\) resulting from applying this decomposition to each \(H_i\cup M_i\).
    Observe that $H'$ is $8$-divisible (since $H$ and $M$ are $40$-divisible), and  it contains an isolated matching of size at least \(e(M)/5 \ge  m/50c\).

    Next we decompose \(R_1\) into an ascending sequence of stars.
	Let $S_1, \ldots, S_r$ be a maximal sequence of edge-disjoint stars in $R_1$ with $e(S_i)=2i$, for $i \in [r]$. 
    Then $\eps^4 m^2 \geq e(R_1)\geq \sum_{i=1}^r e(S_i)=r(r+1)$ which gives $r\leq \eps^2 m$. 
	Let $R_2=R_1 \setminus \bigcup S_i$, noting that $\Delta(R_2)\leq 2r+1\leq 3\eps^2 m$.

	Finally, apply Lemma~\ref{lem:combine-star-with-forest} to $S_i, H_i'$, for each $i \in [r]$.
    This yields a decomposition $(H_i^1, \ldots, H_i^4, S_i^1, \ldots, S_i^4)$ of $S_i \cup H_i'$ where $e(S_i^1) = i$, $e(S_i^j) \leq i$ for $j \in [4]$;
    $H_i^j$ is isomorphic to $H'/4$, and thus is \(2\)-divisible and has an isolated matching of size at least \(m/200c\); and the graphs $H_i^j, S_i^j$ are vertex-disjoint. 
	For $i \in [r+1, 5k']$ let $(H_i^1, \ldots, H_i^4)$ be a decomposition of $H_i'$ into copies of $H'/4$ and let $S_i^j = \emptyset$ for $j \in [4]$ and $i \in [r+1, 5k']$.

	Relabelling, we obtain a decomposition $(H_1, \ldots, H_{20k'}, S_1, \ldots, S_{20k'})$ of $G \setminus R_2$, such that:
    the $S_i$'s are stars of size at most $r$, with at least one of them being a star of size exactly $i$ for every $i \in [r]$, and all but at most $4r \le \eps m$ of the stars are empty; 
    the $H_i$'s are isomorphic, $2$-divisible and contain isolated matchings of size at least $m/200c$; and $H_i, S_i$ are vertex-disjoint for $i \in [20k']$.
	Remove $20k' - k \le 20$ pairs $H_i, S_i$ with $S_i$ empty and possibly relabel, to obtain a sequence $(H_1, \ldots, H_k, S_1, \ldots, S_k)$ with the same properties, assuming additionally that $e(S_1) \le \ldots \le e(S_{k})$. 

	Let $R = G \setminus \bigcup_{i \in [k]}(S_i \cup H_i)$. Then $R$ is the union of $R_2$ and up to $20$ copies of $H_i$.
    We have \(e(H_i) \le m^2/k \le m/\eps\).
    Thus $e(R) \le e(R_2) + 20m/\eps \le e(R_1) + 20m/\eps \le \eps m^2$, and $\Delta(R) \le \Delta(R_2) + 20t \le \eps m$, using that each one of the removed \(H_i\) has components of size at most \(t\).
\end{proof}

By carefully combining the graphs in the decomposition given by \Cref{lem:approx-isomorphic-decomposition-stronger},
the next lemma gives an approximate ascending decomposition of every graph with linear maximum degree.

\begin{lemma}\label{lem:approx-ascending-decomposition}
	Fix $c=10^6$ and
    \(\delta \le 1/5000c\).
    Let \(\eps>0\) be sufficiently smaller than \(\delta\), and \(m\) be large enough.
    Set $b = \floor{\delta m}$.
	Suppose that $G$ is a graph with $\Delta(G)\leq cm$ and $m^2(1/2 - \eps) \le e(G) \le m^2(1/2 + \eps)$. 
	Then, there is a decomposition $(H_{b+1}, \dots, H_{m}, R)$ of $G$, such that: the sequence $(H_{b+1}, \dots, H_{m})$ is ascending; $H_{i}$ is a matching for $i \le m/2000c$; $e(H_{b+1}) \le b + b^2/m$; $e(H_{m})\geq m+1$;  and $\Delta(R)\leq b/10$.

	Additionally, for any $t\in[b+1, m]$, we can ensure that $e(H_t)=e(H_{t+1})$.
\end{lemma}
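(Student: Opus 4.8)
The plan is to build the ascending sequence $(H_{b+1},\dots,H_m)$ with the edge counts essentially forced by the hypotheses. Since the sequence is ascending, $e(H_{i+1})-e(H_i)\in\{0,1\}$, so $e(H_i)\le e(H_{b+1})+(i-b-1)\le i-1+b^2/m$ and $e(H_i)\ge e(H_m)-(m-i)\ge i+1$; hence $e(H_i)=i+O(\delta^2 m)$ for every $i$, and we may place up to roughly $\delta^2 m$ ``flat'' transitions $e(H_i)=e(H_{i+1})$ wherever we wish. So the task is to decompose $G$ edge-disjointly into a chain of copies of graphs of these prescribed sizes, plus a remainder of maximum degree at most $b/10$.

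For the low positions $i\le m/2000c$ (note $b<m/2000c$), where each $H_i$ must be a matching, I would argue as in \Cref{lem:asd-bounded-degree}: extract from $G$, using Vizing's theorem, a subgraph $G_0$ of small maximum degree with $e(G_0)$ roughly $\binom{\floor{m/2000c}+1}{2}-\binom{b+1}{2}$, decompose it into matchings, balance these to almost equal size via \Cref{lem:balancing-matching}, and cut the equal matchings into pieces of the prescribed sizes $b+1,\dots,\floor{m/2000c}$; the small low-degree leftover of $G_0$ goes into $R$, and a flat transition at an index $t\le m/2000c$ is trivially accommodated.

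For the main range $i>m/2000c$ the engine is \Cref{lem:approx-isomorphic-decomposition-stronger}: applied to a subgraph $G'$ with $\Delta(G')\le c\mu$ and $e(G')\in(0.2\mu^2,\mu^2)$ it produces isomorphic, $2$-divisible graphs carrying isolated matchings of size at least $\mu/200c$, an ascending sequence of stars disjoint from them, and a remainder of maximum degree at most $\eps\mu$. Within one isomorphism class $\widetilde H$ the graphs $\widetilde H\cup S$ already form an ascending sequence as $S$ ranges over stars disjoint from $\widetilde H$; and deleting $j_i$ edges of the isolated matching from the $i$-th graph, with $(j_i)$ non-increasing and dropping by at most one per step, lowers the edge counts so that consecutive graphs still differ by $0$ or $1$ and the size sequence can be steered through any target interval of length about $\mu/200c$ (plus the growth contributed by the stars). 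Thus one application handles a ``window'' of consecutive positions of length $\approx\mu/200c$ centred near size $\mu$. I would cover $(m/2000c,m]$ by a bounded number of such windows, applied to edge-disjoint subgraphs $G_1',G_2',\dots$ of $G$ of geometrically increasing sizes (a window near size $p$ of width $w$ needs a subgraph of roughly $wp$ edges, and the constraint that $w$ be at most about $\mu/200c\approx\sqrt{wp}/200c$ forces $w\approx p/c^2$, which is consistent with the hypotheses of \Cref{lem:approx-isomorphic-decomposition-stronger}); the deleted matching edges, out-of-range graphs, components removed for divisibility, and the individual remainders all go into $R$. As $R$ is then a union of boundedly many pieces of small maximum degree, $\Delta(R)\le b/10$; and exactly $\binom{b+1}{2}+O(\eps m^2)$ edges are diverted into $R$, so we need the slack $\binom{b+1}{2}\sim\delta^2 m^2/2$ to dominate the $O(\eps m^2)$ error — this is precisely why $\eps$ must be taken much smaller than $\delta$. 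A flat transition at a general $t$ is then realised by reserving one of the $\approx\delta^2 m$ available flat steps at position $t$.

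The main obstacle I expect is splicing the windows into one ascending sequence: each window produces graphs of a single isomorphism type (modulo stars and matching trimmings), so the largest graph used in window $j$ must be isomorphic to a subgraph of the smallest graph used in window $j+1$, even though these come from independent applications of \Cref{lem:approx-isomorphic-decomposition-stronger} to different subgraphs of $G$; in particular the isomorphism classes of successive windows have to be arranged to nest, and the transition from matchings to non-matchings near position $m/2000c$ must be made gradual enough that the first non-matching graphs still contain a matching of size $\approx m/2000c$ (so that they dominate the last pure matching). Carrying out this nesting while simultaneously keeping every piece of $R$ of small maximum degree, keeping $e(R)\ge 0$, and accounting exactly for $e(G)=(1/2\pm\eps)m^2$ is the delicate heart of the argument; by comparison the low-position matching step and the intra-window edge arithmetic are routine.
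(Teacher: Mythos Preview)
Your proposal correctly identifies the arithmetic constraints on the $e(H_i)$ and the role of $\eps\ll\delta$, but it has a genuine gap at exactly the point you flag as ``the delicate heart of the argument'': splicing the windows. Separate applications of \Cref{lem:approx-isomorphic-decomposition-stronger} to edge-disjoint subgraphs $G_1',G_2',\dots$ produce isomorphism classes $\widetilde H^{(1)},\widetilde H^{(2)},\dots$ over which you have essentially no control --- the types of components (star sizes, numbers of $K_{t,t}$'s) depend on the internal structure of each $G_j'$. There is no mechanism in the paper (or in your sketch) to force $\widetilde H^{(j)}$ to embed into $\widetilde H^{(j+1)}$, so the ascending property will in general fail at every window boundary. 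Trimming isolated matchings or adding stars cannot fix this, since the non-matching parts of the two types are simply unrelated. The same problem already appears at the join between your matching block and the first non-matching window: you need the first non-matching $H_i$ to \emph{be} a matching plus one further edge, but an application of \Cref{lem:approx-isomorphic-decomposition-stronger} does not produce anything of that shape.

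The paper avoids this entirely with a single application of \Cref{lem:approx-isomorphic-decomposition-stronger} (with $k_{\ref{lem:approx-isomorphic-decomposition-stronger}}\approx m/2$) and a ``folding'' trick that exploits the $2$-divisibility. One fixes an edge ordering $e(1),\dots,e(2h)$ of the common type $H$ so that the first $m/1000c$ edges form an isolated matching and the prefix $\{e(1),\dots,e(h)\}$ is isomorphic to the suffix $\{e(h+1),\dots,e(2h)\}$. Cutting each $H_i$ at position roughly $a+b+i$ gives a prefix $A_i$ and a suffix $B_i$; as $i$ grows the $A_i$ are ascending and the $B_i$ are descending, and at the midpoint $A_k\cong B_k$ because each is exactly half of $H$. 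Reading off $A_1,\dots,A_k,B_k,\dots,B_1$ (and then appending the star-augmented graphs $C_i$) therefore gives one long ascending sequence with no splicing problem at all, and the early $A_i$ are automatically matchings because their edges lie in the isolated-matching prefix. This fold is the key idea you are missing; once you have it, the arithmetic you outlined (the computation of $a$, the placement of flat steps at $t_1,k,2k-t_1,2k,2k+t_2$, and the bound $\Delta(R')\le b/10$) goes through in a single pass.
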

\begin{proof}
	Let 
    $\ell \in \{\floor{\eps m}, \floor{\eps m}+1\}$ 
    be such that $m - b - \ell$ is even, and write $k = \frac{m-b-\ell}2$. 
    Pick numbers $t_1\in [k-1]$ and 
    $t_2\in [\ell]$ 
    so that 
    $t-b\in\{t_1, k, 2k-t_1, 2k, 2k+t_2\}$ 
	(this is possible because $1 \le t-b \le m-b = 2k + \ell$).
    We will find a sequence of graphs as in the lemma so that $H_{i}\cong H_{i+1}$ for all $i$ with  
    $i-b\in \{t_1,k, 2k-t_1, 2k, 2k+t_2\}$,
    so in particular \(H_{t} \cong H_{t+1}\).

	By Lemma~\ref{lem:approx-isomorphic-decomposition-stronger} (with $k_{\ref{lem:approx-isomorphic-decomposition-stronger}} = k+\ell$ and $\eps_{\ref{lem:approx-isomorphic-decomposition-stronger}} = \eps^2$) there is a decomposition of $G$
	\begin{equation*}
		(H_1, \ldots, H_{k+\ell}, S_1, \ldots, S_{k+\ell}, R)
	\end{equation*}
    such that: the $H_i$'s are isomorphic, $2$-divisible, and contain isolated matchings of size at least $m/500c$; 
    the $S_i$'s form an ascending sequence of stars, with $e(S_i) = 0$ for $i \in [k+1]$ (since \(\ell - \eps^2 m \ge 1\) and thus $k+\ell-\eps^2 m \ge k+1$); 
    $S_i$ and $H_i$ are vertex-disjoint for each $i$; and $R$ satisfies $e(R) \le \eps^2 m^2$ and $\Delta(R) \le \eps^2 m$. Let $H$ be the isomorphism class of the $H_i$'s.

	If $e(S_{k+t_2+1})\neq e(S_{k+t_2})$, move an edge from each of $S_{k+t_2+1}, \dots, S_{k+\ell}$ to $R$ (so now $e(S_{k+t_2+1})= e(S_{k+t_2})$, $e(R) \le 2\eps m^2$, and $\Delta(R)\leq 2\eps m$).

	Let $h=e(H)/2$, noting that this is an integer (since $H$ is $2$-divisible), and set $a=h-k-b$.
	\begin{innerclaim} \label{claim}
		$(1/3) \cdot b^2/m \le a \le (2/3)\cdot b^2/m$.
	\end{innerclaim}
	\begin{proof}
		Write $O(\eps)$ to denote an expression which is in the interval $[-A\eps, A\eps]$, where $A$ is a constant that does not depend on $c, \delta, \eps, m$.
		Then $b = m \cdot (\delta + O(\eps))$, $k = \frac{m}{2}(1 - \delta + O(\eps))$,
		$\ell = O(\eps m)$,
        \(e(R) = O(\eps m^2)\)
        and \(\sum_{i=1}^{k+\ell} e(S_i) \le \ell^2 = O(\eps m^2)\) (because $e(S_{k+1})=0$ and $S_i$ is ascending).
        Thus,
		\begingroup
		\addtolength{\jot}{.4em}
		\begin{align*}
			a&= \frac{e(G)-e(R)-\sum e(S_i)}{2(k+\ell)}-k-b \\ 
			&= \frac{e(G) - e(R) - \sum e(S_i) - 2k^2 - 2k\ell - 2bk - 2b\ell}{2(k+\ell)} \\
			&= \frac{\left(\frac{1}{2} - \frac{1}{2}(1 - 2\delta + \delta^2) - (\delta - \delta^2) + O(\eps)\right)m^2}{(1 - \delta + O(\eps))m} \\
			&=\left(\frac{\delta^2}{2} + O(\eps)\right)
            \left(1+\delta + O(\delta^2)\right)m\\
			&= \frac{\delta^2m}{2} + O(\delta^3 m)
			= \frac{b^2}{2m} + O(\delta^3m).
		\end{align*}
		\endgroup
		In particular, $(1/3) \cdot b^2/m \le a \le (2/3) \cdot b^2/m$.
	\end{proof}

	Order $E(H)$ as $e(1), \dots, e(2h)$ so that $\{e(1), \dots, e({m/1000c})\}$ is an isolated matching and the prefix $\{e(1), \dots, e({h})\}$ is isomorphic to the suffix $\{e({h+1}),\dots, e({2h})\}$ (this is possible since $H$ is $2$-divisible and contains a matching of size $m/500c$).
	Let $e_i(j)$ be the copy of $e(j)$ in $H_i$.
	For $i \in [k]$, set 
	\begin{align*}
		& A_i^+ = \{e_i(1), \ldots, e_i(a+b+i+1)\} \\
		& A_i^- = \{e_i(1), \ldots, e_i(a+b+i)\} \\
		& B_i^+ = \{e_i(a+b+i+2), \ldots, e_i(2h)\} \\
		& B_i^- = \{e_i(a+b+i+1), \ldots, e_i(2h)\},
	\end{align*}
	and, for $i \in [k+1, k+\ell]$, define
	\begin{equation*}
		C_i = \{e_i(a+b+3), \ldots, e_i(2h)\} \cup S_i.
	\end{equation*}
    Observe that \(A_i^+\) and \(A_i^-\) are matchings for \(i\in[ m/2000c]\), since \(a+b+m/2000c +1 \le m/1000c\) and \(e(1),\hdots, e(m/1000c)\) is a matching.
	Moreover, note that $A_i^+\cong A_{i+1}^{+}\setminus\{e_{i+1}(a+b+i+2)\}$, showing that the sequence $(A_1^+, \ldots, A_k^+)$ is ascending. Similarly, the sequences $(A_1^-, \ldots, A_k^-)$, $(B_k^+, \ldots, B_1^+)$, and $(B_k^-, \ldots, B_1^-)$ are all ascending. The sequence $(C_{k+1}, \ldots, C_{k+\ell})$ is also ascending because the $S_i$'s are ascending.
	Additionally, note that: $A_i^+ \cong A_{i+1}^-$ and $B_i^+ \cong B_{i+1}^-$ for $i \in [k-1]$; $A_k^- \cong B_k^-$ (since $a+b+k=h$); and $B_1^+ \cong C_{k+1}$ (using $e(S_{k+1}) = 0$).
    Also, since \(e(S_{k+t_2}) = e(S_{k+t_2+1})\), we have \(C_{k+t_2} \cong C_{k+t_2+1}\).
	Altogether, this shows that the following sequence is ascending.
	\begin{equation*}
		A_1^+, \ldots, A_{t_1}^+, A_{t_1+1}^-, \ldots, A_k^-, B_k^-, \ldots, B_{t_1+1}^-, B_{t_1}^+, \ldots, B_1^+, C_{k+1}, \ldots, C_{k+\ell}.
	\end{equation*}
	Letting $F_{b+i}$ be the $i^{\text{th}}$ graph in this sequence, we get a sequence $F_{b+1}, \ldots, F_{m}$ which is ascending with $F_{b+i+1} \cong F_{b+i}$ for $i \in \{t_1, k, 2k-t_1, 2k, 2k+t_2\}$, where 
    \(F_{b+2k+t_2} \cong F_{b+2k+t_2+1}\) follows from \(C_{k+t_2} \cong C_{k+t_2+1}\). See Figure~\ref{Figure_triangle} for an illustration of this sequence.

	\begin{figure}
		\centering
		\includegraphics[width=1\textwidth]{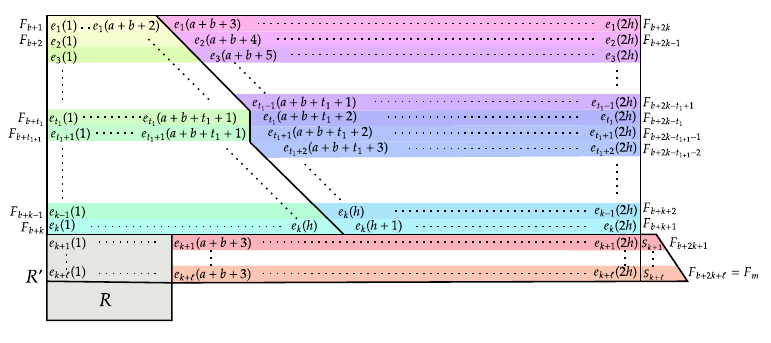}
		\caption{An illustration of how the graph $G$ is partitioned into $(F_{b+1}, \dots, F_m, R')$. Here all the edges of $G$ are pictured in the way we get them from \Cref{lem:approx-isomorphic-decomposition-stronger} at the start of the proof --- either as edges $e_i(j)$ of $H_i$, the stars $S_i$, or the remainder $R$. The isomorphic graphs $H_i$ are arranged in a rectangle so that any subset of edges $e_i(j)$ in row $i$ is isomorphic to the corresponding subset on any other row. The coloured areas represent the final partition $(F_b, \dots, F_m, R')$ which ends up satisfying the lemma.} 
		\label{Figure_triangle}
	\end{figure}

	Since \(A_i^+, A_i^-\) are matchings for \(i\in[ m/2000c]\) we have that $F_{b+i}$ is a matching for $ i \le m/2000c$. We have $e(F_{b+1}) = a+b+2 \le b + b^2/m$, by \Cref{claim}. Since $F_{b+1}, \ldots, F_{m}$ is ascending and $e(F_{b+i}) = e(F_{b+i+1})$ for at most $5 + \ell$ values of $i$ (at worst, they are not strictly ascending for  $i\in \{t_1, k, 2k-t_1, 2k, 2k+t_2\}\cup [k+1, k+\ell]$), we have that 
	\begin{equation*}
		e(F_m) \ge e(F_{b+1}) + m - b - 1 - (5 + \ell) =(a+b+2)+m-b-\ell-6=m + a - \ell - 4 \ge m + 1, 
	\end{equation*}
	using that $a=\Theta(\delta^2m)$ (by \Cref{claim}), $\ell=O(\eps m)$ and that $\eps$ is sufficiently small compared to $\delta$.

	Write $R' = R \cup \bigcup_{i = k+1}^{k+\ell} \{e_i(1), \ldots, e_i(a+b+2)\}$. Then $\Delta(R') \le \Delta(R) + \ell \le 4\eps m \le b / 10$ (using that $\{e_i(1), \ldots, e_i(a+b+2)\}$ is a matching for every $i$).
	Noting that $\{A_i^-, B_i^-\}$ and $\{A_i^+, B_i^+\}$ are decompositions of $H_i$, it follows that $(F_{b+1}, \ldots, F_m, R')$ is a decomposition of $G$, satisfying the requirements of the lemma.
\end{proof}

For the proof of \Cref{lem:max-deg-linear} we will need the following simple observation.
\begin{observation}\label{obs:ascending-after-deleted-matchings}
	Let $(H_i, H_{i+1})$ be ascending and $M_i\subseteq H_i, M_{i+1}\subseteq H_{i+1}$ be isolated matchings. Suppose we have one of
	\begin{itemize}
		\item 
			$e(H_{i+1}\setminus M_{i+1})=e(H_i\setminus M_i)+1$.
		\item  
			$e(H_{i+1}\setminus M_{i+1})=e(H_i\setminus M_i)$ and $H_{i+1} \cong H_i$.
	\end{itemize}
	Then $(H_{i}\setminus M_{i}, H_{i+1}\setminus M_{i+1})$ is ascending.
\end{observation}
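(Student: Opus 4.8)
The whole statement rests on one structural observation about isolated matchings, which I would prove first: if $M$ is an isolated matching in a graph $H$, then every edge of $M$ is a connected component of $H$ isomorphic to $K_2$. Indeed, an endpoint of $e\in M$ is incident to no edge of $H\setminus M$ (since $M$ is isolated) and to no other edge of $M$ (since $M$ is a matching), hence has degree $1$ in $H$, so $e$ is a component of $H$. Consequently, writing $\mathcal{C}(F)$ for the multiset of isomorphism types of connected components of a graph $F$ (so that isomorphism of graphs---ignoring isolated vertices, which is all that `ascending' depends on---amounts to equality of these multisets), we have $\mathcal{C}(H)=\mathcal{C}(H\setminus M)\uplus e(M)\cdot[K_2]$.

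Now set $a=e(M_i)$, $b=e(M_{i+1})$, $G=H_i\setminus M_i$, $G'=H_{i+1}\setminus M_{i+1}$. The plan is to first pin down the relation between $a$ and $b$ from edge counts, and then compare $\mathcal{C}(H_i)$ with $\mathcal{C}(H_{i+1})$, cancelling copies of $[K_2]$. Since an ascending pair satisfies $e(H_{i+1})\in\{e(H_i),e(H_i)+1\}$, and $e(H_i)=e(G)+a$, $e(H_{i+1})=e(G')+b$, one reads off: if $e(G')=e(G)$ and $H_{i+1}\cong H_i$ then $a=b$; and if $e(G')=e(G)+1$ then $a=b$ unless $H_i\cong H_{i+1}$, in which case $a=b+1$.

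In the first case, cancelling $a=b$ copies of $[K_2]$ from the identity $\mathcal{C}(H_i)=\mathcal{C}(H_{i+1})$ gives $\mathcal{C}(G)=\mathcal{C}(G')$, i.e.\ $G\cong G'$, so $(G,G')$ is ascending. In the second case I would split into sub-cases according to the form of the ascending pair $(H_i,H_{i+1})$. If $H_i\cong H_{i+1}$, then (using $a=b+1$) cancelling $b$ copies of $[K_2]$ yields $\mathcal{C}(G')=\mathcal{C}(G)\uplus[K_2]$, so $G'\cong G+K_2$ and $G\cong G'\setminus g$ for the new isolated edge $g\in E(G')$. If instead $H_i\cong H_{i+1}\setminus f$ with $f\in E(H_{i+1})$, then $a=b$ and I would split once more on whether $f\in M_{i+1}$ or $f\in E(G')$: in the former, $\mathcal{C}(H_{i+1}\setminus f)=\mathcal{C}(G')\uplus(b-1)\cdot[K_2]$, and cancelling $a-1=b-1$ copies of $[K_2]$ gives $\mathcal{C}(G')=\mathcal{C}(G)\uplus[K_2]$, hence again $G\cong G'\setminus g$; in the latter, $\mathcal{C}(H_{i+1}\setminus f)=\mathcal{C}(G'\setminus f)\uplus b\cdot[K_2]$, and cancelling $a=b$ copies of $[K_2]$ gives $\mathcal{C}(G)=\mathcal{C}(G'\setminus f)$ with $f\in E(G')$. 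In every case $(G,G')$ is ascending, which is the claim.

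There is no serious obstacle here; the only thing to be careful about is the bookkeeping---stating the component-multiset identity precisely under the convention that isolated vertices are ignored, and, in the sub-case $H_i\cong H_{i+1}\setminus f$, correctly distinguishing whether the removed edge $f$ belongs to the matching $M_{i+1}$ (which shifts the count of $K_2$-components by one, hence the slightly different arithmetic in the two sub-sub-cases) or to $G'$. Everything else is a one-line edge-count computation.
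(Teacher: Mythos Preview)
Your proof is correct and follows essentially the same idea as the paper's: both hinge on the fact that edges of an isolated matching are $K_2$-components, so removing such a matching only deletes isolated-edge components and the resulting isomorphism type depends only on how many are removed. The paper's version is terser---it does not split the case $H_i\cong H_{i+1}\setminus f$ according to whether $f\in M_{i+1}$ or $f\in E(G')$, instead implicitly using that removing any isolated matching of a given size from a graph yields the same isomorphism type---whereas your component-multiset bookkeeping makes that step explicit. Both arguments are the same at heart.
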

\begin{proof}
	Suppose \(H_{i+1} \cong H_i\).
	If the first condition holds, we must have \(\abs{M_{i}} = \abs{M_{i+1}}+1\).
	Hence, since \(M_i, M_{i+1}\) do not intersect any other edges of \(H_i, H_{i+1}\) respectively, \((H_i\setminus M_i, H_{i+1}\setminus M_{i+1})\) is ascending.
	If the second condition holds then 
	\(\abs{M_i} = \abs{M_{i+1}}\) so 
	\(H_{i+1}\setminus M_{i+1} \cong H_i\setminus M_i\).

	Suppose \((H_i, H_{i+1})\) is ascending with \(e(H_{i+1}) = e(H_i) +1\).
	Then only the first condition can hold, and it implies that \(\abs{M_i} = \abs{M_{i+1}}\).
	Hence, since \(M_i, M_{i+1}\) are isolated, we have  that \((H_i\setminus M_i, H_{i+1}\setminus M_{i+1})\) is ascending.	
\end{proof}

Having finished all the necessary preparations, we are now ready to prove Lemma~\ref{lem:max-deg-linear}. This lemma is stated in \Cref{subsec:reduction}, where we use it to prove the main result of this paper, Theorem~\ref{thm:main}.
It says that for $c=10^6$ and \(m\) sufficiently large, if $G$ is a graph satisfying $e(G) \in (\binom{m}{2}, \binom{m+1}{2}]$ and $\Delta(G) \le cm$, then $G$ has an ascending subgraph decomposition.

\begin{proof}[Proof of Lemma~\ref{lem:max-deg-linear}]
    Let \(\delta  = 10^{-7}c^{-1}=10^{-13}\), let \(\eps\) be sufficiently smaller than \(\delta\) so that \Cref{lem:approx-ascending-decomposition} applies,
    and let \(b = \floor{\delta m}\).
	Let $t=e(G)-\binom{m}{2}$, noting that $1 \le t \le m$. 
	Write 
	\begin{equation*}
		e_i = \left\{
			\begin{array}{ll}
				i & i \in [t]\\
				i - 1 & i \in [t+1, m].
			\end{array}
			\right.
	\end{equation*}
	Apply Lemma~\ref{lem:approx-ascending-decomposition} to get an ascending sequence $(H_{b+1}, \dots, H_{m})$ such that $H_{i}$ is a matching for $i \in [b+1,  m/2000c]$;
    $e(H_{b+1}) \le b + b^2/m$; $e(H_m)\geq m+1$; the graph $R := G \setminus \bigcup H_i$ has maximum degree at most $b/10$; and, if \(t \in [b+1,m]\), we moreover require the decomposition is such that \(H_t \cong H_{t+1}\).

	Set $x_i:=e(H_i)-e_i$, \(i \in [b+1,m]\).
	We claim that $0 < x_i \le 2b^2/m$ for all \(i \in [b+1,m]\). Indeed, if we ever had $e(H_i)\leq e_i$, then we would have $e(H_i) \le i$ and, since the sequence is ascending, we would also have $e(H_m) \le m$, a contradiction. For the upper bound, using that $H_{b+1}, \ldots, H_m$ is ascending and the definition of $e_i$, we get
	\begin{align*}
		x_i - x_{b+1} = e(H_i) - e(H_{b+1}) - (e_i - e_{b+1})
		\le i - (b+1) - \big((i-1)  - (b+1)\big) = 1,
	\end{align*}
	showing that $x_i \le x_{b+1} + 1= e(H_{b+1})-e_{b+1}+1\le (b+b^2/m)-b+1\le 2b^2/m$.
	Write $a = \max x_i$ (so $1 \le a \le 2b^2/m$).

	Randomly pick an isolated matching $M_i\subseteq H_i$ of size $x_i$, making the choices independently for $i \in [b+1, m]$. There is always room to pick such a matching since for $i \le m/2000c$, $H_i$ is a matching of size $e(H_i)\geq i\geq b\geq 2b^2/m\geq a$, while for $i>m/2000c$ the graph $H_i$ contains an isolated matching of size $m/2000c\geq 2b^2/m\geq a$.
	This also shows that there are at least $i$ choices for each edge of $M_i$ for $i \le m/2000c$, and there are at least $m/2000c$ choices for each edge for $i > m/2000c$. Thus,
	\begin{align*}
		& \prob{v \in V(M_i)} 
            \le
		\left\{
			\begin{array}{ll}
				\frac{a}{i} & \text{for $i \le m/2000c$}\\[.4em]
				\frac{2000ca}{m} & \text{for $i > m/2000c$}.
			\end{array}
		\right.
	\end{align*}
	Letting $F=\bigcup_i M_i$, we have 
	\begingroup
	\addtolength{\jot}{.4em}
	\begin{align*}
		\expect{d_F(v)}
		& \leq \sum_{b+1 \,\le\, i \,\le\, m/2000c}\frac{a}{i} + \sum_{m/2000c < i \le m}\frac{2000ca}{m} \\
		& \leq a\int_{x=b}^{m/2000c}\frac1x \, dx + 2000ca \\
		& =  a\ln\left(\frac{m}{2000cb}\right)+ 2000ca \\
		& \leq a\ln(m/b)+ 2000ca\leq b/100,
	\end{align*}
	\endgroup
	using 
	$\ln(m/b)\leq m/ 1600 b\leq b/800a$
	and $2000ca \le 4000c b^2/m \le b/200$ since 
	$b/m \le \delta  = 10 ^{-7}c^{-1}$. Since the number of $H_i$ is $m-b$,
	by Chernoff's bound (\Cref{thm:Chernoff}), for each vertex \(v\) we have 
	$\prob{d_F(v)\geq b/50} \le e^{-2(b/100)^2/(m-b)} \le e^{-\delta^2m /10000}$. 
	Taking the union bound over all (non-isolated) vertices $v$ in $G$ (of which there are at most $2e(G)\leq 2m^2$), we deduce that, with positive probability, for all vertices \(v\), \(d_F(v)\leq b/50\).
	Hence there are matchings $M_{b+1},\hdots, M_m$ so that $\Delta(F)\leq b/50$.

	By definition of $x_i, M_i$ we have $e(H_i\setminus M_i)=e_i$ for $i \in [b+1, m]$,  and hence
	$e(F\cup R)= e_1+\dots+ e_{b}=\binom{b+1}{2}$ (using that $e(G)=\sum_{i=1}^me_i$). 
	We also have $\Delta(F \cup R) \leq b/5$. 
	Therefore, by Lemma~\ref{lem:asd-bounded-degree}, $F \cup R$ has an ASD into matchings $(M_1, \dots, M_b)$ with $e(M_i)=e_i$.
	Now $(M_1, \dots, M_b, H_{b+1}\setminus M_{b+1}, \dots, H_{m}\setminus M_m)$ is an ascending decomposition of $G$: first note that for each $i\leq b$, we have $e(M_i)=e_i$ and for $i\geq b+1$ we have $e(H_i\setminus M_i)=e_i$. Next, notice that the graphs $M_1, \dots, M_{b}, H_{b+1}\setminus M_{b+1}$ are matchings, and so this sequence is ascending. 
	Finally, it follows from \Cref{obs:ascending-after-deleted-matchings} that $H_{b+1} \setminus M_{b+1}, \ldots, H_m \setminus M_m$ is ascending: indeed, for \(i\neq t\), the first condition of \Cref{obs:ascending-after-deleted-matchings} applies to 
    \((H_{i}\setminus M_i, H_{i+1}\setminus M_{i+1})\), and for \(i=t\), if \(t\ge b+1\) the second condition applies, since \(H_t \cong H_{t+1}\).
    For \(t\le b\), we have \(M_t \cong M_{t+1}\).
	This completes the proof of the lemma.
\end{proof}    

\section{Conclusion} \label{sec:conclusion}
We proved the main conjecture of~\cite{original87} by showing that each graph has an ascending subgraph decomposition consisting of star forests and subgraphs of $K_{t,t}$'s.
It would be interesting to understand whether the latter graphs are necessary or just an artefact of our proof. Faudree, Gy\'arf\'as, and Schelp have conjectured that ascending decomposition purely using star forests should always exist.
\begin{conjecture}[Faudree, Gy\'arf\'as, and Schelp \cite{faudree87}]\label{Conj_star_forests}
    Every graph $G$ with $\binom{m+1}2$ edges has an ascending subgraph decomposition $H_1, \dots, H_m$, where each $H_i$ is a star forest.    
\end{conjecture}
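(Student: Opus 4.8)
The plan is to run the proof of \Cref{thm:main} while tracking the property ``is a star forest'' through every step, and to isolate the single point where it breaks down. The reduction in the proof of \Cref{thm:main} carries over once its two inputs are put in star-forest form: there each of the last $k$ graphs is $\hat S_i\cup H_i$ with $\hat S_i$ a star vertex-disjoint from $H_i$, and a vertex-disjoint union of a star forest with a star is a star forest; the first $r$ graphs come from \Cref{thm:main-stars}, which outputs stars; and in the degenerate case the graphs $H_{r+1},\dots,H_m$ are single edges. So it suffices to prove a star-forest analogue of \Cref{lem:max-deg-linear}: every $G$ with $e(G)\in(\binom{m}{2},\binom{m+1}{2}]$ and $\Delta(G)\le cm$ has an ascending subgraph decomposition into star forests.

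For this I would re-run the chain \Cref{lem:approx-star-forest-decomposition} $\to$ \Cref{lem:approx-isomorphic-decomposition} $\to$ \Cref{lem:approx-isomorphic-decomposition-stronger} $\to$ \Cref{lem:approx-ascending-decomposition}, then the concluding Chernoff/\Cref{lem:asd-bounded-degree} argument. Since matchings are star forests and vertex-disjoint unions of star forests are star forests, \Cref{lem:approx-star-forest-decomposition} (whose output is star forests and matchings), \Cref{lem:combine-star-with-forest}, \Cref{lem:large-matching-forest}, the ascending staircase of \Cref{lem:approx-ascending-decomposition} (its pieces $A_i^\pm,B_i^\pm,C_i$ are edge-subsets of star forests together with vertex-disjoint stars), and the final removal of the matchings $M_i$ followed by the matching-decomposition of the remainder all preserve the property. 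The \emph{only} place non-star-forest graphs appear is in \Cref{lem:approx-isomorphic-decomposition}, where the graph $R_2$ spanned by the $O(m)$ high-degree vertices is almost-decomposed into isomorphic $K_{t,t}$-forests via \Cref{lem:Ktt-decomposition} and then combined with the sparse star forests $\SF_i$ into $H=\SF'+\KF$. Thus everything comes down to replacing \Cref{lem:Ktt-decomposition} by an almost-decomposition of $R_2$ into \emph{isomorphic star forests} that still combines cleanly with the $\SF_i$.

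The natural attempt copies the proofs of \Cref{lem:unequal-Ktt-decomposition,lem:Ktt-decomposition} with stars $K_{1,t}$ in place of $K_{t,t}$: embed $V(R_2)$ into a projective plane of order $\approx\sqrt n$ so that any two vertices share exactly one line-graph, greedily extract $K_{1,t}$-copies from each line-graph until its maximum degree falls below $t$, form the hypergraph $\calH$ of extracted copies, and invoke \Cref{cor:pipp-spencer}; the codegree of $\calH$ is $O(\sqrt n)$ because any common $K_{1,t}$ of two vertices is confined to the one line through them, so $\calH$ decomposes into $O(\Delta(\calH))$ isomorphic star forests with stars of size $t$, which \Cref{prop:division} would then equalise into $k$ pieces. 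This is where the argument stalls, in two related ways. First, a vertex can be a \emph{leaf} of linearly many extracted stars, so $\Delta(\calH)=\Omega(m)$, the decomposition has $\Omega(m)$ pieces (not $O(m/t)$, as for $K_{t,t}$-forests), and, since $k$ may be as small as $\eps m$, the equalisation step of \Cref{prop:division} discards a constant fraction of the edges rather than a $o(1)$ fraction. Second, and more seriously, to build a copy of $\SF'+\KF$ inside $\SF_i\cup\KF_i$ one must delete the components of $\SF_i$ meeting the dense piece, which is only affordable when the dense piece has few vertices compared with the (at most $O(m)$) components of $\SF_i$; but a star forest carrying $E$ edges uses $\approx E$ vertices --- roughly one per edge, as a matching does --- so the dense pieces, each with $\Theta(m^2/k)=\Theta(m)$ edges, are too ``thick'' to be cleared out of $\SF_i$.

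Both failures are the same phenomenon: star forests and matchings pack only about one edge per vertex, whereas a $K_{t,t}$-forest packs $\approx t/2$, and the proof of \Cref{thm:main} exploits exactly this efficiency. I expect the second of these --- the inability to make the dense star-forest pieces vertex-disjoint from the sparse ones --- to be the true obstacle. Getting past it seems to require a genuinely new idea: either a different interface between the dense and sparse parts that does not demand vertex-disjointness (for instance, arranging that the union of a dense star centred in the high-degree set with a sparse star is again a star forest, by controlling which high-degree vertices act as centres), or a way to feed the dense part into the ascending staircase without combining it with the $\SF_i$ at all. Closing this gap is precisely the content of \Cref{Conj_star_forests} beyond \Cref{thm:main}.
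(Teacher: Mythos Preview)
This statement is a \emph{conjecture} in the paper, not a theorem; the paper offers no proof of it. Your proposal is not a proof either, and you say so explicitly at the end. What you have written is a careful diagnosis of where the proof of \Cref{thm:main} would break if one tried to force every $H_i$ to be a star forest, and your diagnosis lands on the right spot: the bottleneck is \Cref{lem:approx-isomorphic-decomposition}, specifically the use of $K_{t,t}$-forests to almost-decompose the dense graph $R_2$ on the high-degree vertices. Your downstream checks (that \Cref{lem:approx-star-forest-decomposition}, \Cref{lem:combine-star-with-forest}, \Cref{lem:large-matching-forest}, the staircase in \Cref{lem:approx-ascending-decomposition}, and the final Chernoff/\Cref{lem:asd-bounded-degree} step all preserve the star-forest property) are correct, since any edge-subgraph of a star forest is a star forest and vertex-disjoint unions of star forests are star forests.

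The paper's own discussion, in \Cref{sec:conclusion}, reaches the same verdict by a cleaner route. Rather than tracing what goes wrong when one replaces $K_{t,t}$ by $K_{1,t}$ inside \Cref{lem:unequal-Ktt-decomposition} and \Cref{lem:Ktt-decomposition}, the paper simply observes that the star-forest analogue of \Cref{lem:approx-isomorphic-decomposition} is \emph{false}: any star forest in $K_m$ has at most $m-1$ edges, so $K_m$ cannot be nearly-decomposed into $\eps m$ isomorphic star forests once $\eps<1/2$. This is a sharper and more decisive form of your ``one edge per vertex'' obstruction, and it shows that no local tweak of the $K_{t,t}$ step can rescue the lemma as stated --- the architecture, not just the building block, has to change. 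Your two technical failures (the hypergraph having $\Omega(m)$ rather than $O(m/t)$ matchings, and the dense star-forest pieces being too ``thick'' in vertices to clear out of $\SF_i$) are both downstream symptoms of this single counting obstruction; you would strengthen your write-up by leading with the $K_m$ example.
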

The techniques introduced in this paper are likely to be useful for approaching this conjecture (for large $m$). However, we think that new ingredients would be needed too --- mainly because one of our key intermediate results (\Cref{lem:approx-isomorphic-decomposition}) is not true when one restricts to star-forest decompositions. Indeed, the essence of \Cref{lem:approx-isomorphic-decomposition} is that every graph with $\binom m2$ edges can be nearly-decomposed into $\eps m$ isomorphic subgraphs (for $\eps^{-1}\ll m$). But the complete graph $K_m$ cannot be nearly-decomposed into $\eps m$
isomorphic star forests for $\eps<1/2$ (just because any star forest in $K_m$ has at most $m-1$ edges). Thus it seems necessary to deviate from our proof strategy if one wants to prove \Cref{Conj_star_forests} (at least when $G$ is very dense).

It is possible that even stronger generalizations of the ascending subgraph decomposition conjecture are true. Another feature of our proof of \Cref{thm:main} is that it produced a decomposition into graph $H_i$ which are all the disjoint union of one potentially large star and a lot of connected components of bounded size. If these small components could be made to have size $1$, then we would obtain a strengthening of \Cref{Conj_star_forests}.
\begin{problem} 
Does every sufficiently large graph $G$ with $\binom{m+1}2$ edges have an ascending subgraph decomposition $H_1, \dots, H_m$, where each $H_i$ is a disjoint union of a star and a matching?  
\end{problem}

Igor Balla suggested the following variant of our problem: for which sequences $a_1 \le \ldots \le a_m$ does every graph on $a_1 + \ldots + a_m$ edges have a decomposition $(H_1, \ldots, H_m)$ with $e(H_i) = a_i$ and $H_i$ isomorphic to a subgraph of $H_{i+1}$?
Our methods might work if $a_i + a_{m-i}$ is the same for all $i$.

Recall that one of the central open problems in the area of graph decompositions is the Gy\'arf\'as tree packing conjecture: if $T_1, \ldots, T_{n-1}$ is any sequence of trees with $e(T_i) = i$, then we can decompose $K_n$ into copies of \(T_1,\hdots,T_{n-1}\).
This is only known when $\Delta(T_i') \le \frac{cn}{\log n}$~\cite{allen2021}.
Nati Linial asked whether this becomes easier if we assume that the sequence of trees $T_1, \ldots, T_{n-1}$ is ascending.

Finally,
clearly for a star forest to have a star ASD,
the $i^{\text{th}}$ smallest component needs to have size at least $i$. This shows that our condition in Theorem~\ref{thm:main-stars} is tight up to constant factors. 
It would be interesting to determine the precise constants that are necessary.

\bibliographystyle{amsplain} 
\bibliography{bibliography}   
\end{document}